\definecolor{red}{rgb}{1,0,0}
\newtheorem{thm}[subsection]{Theorem}
\newtheorem{defn}[subsection]{Definition}
\newtheorem{prop}[subsection]{Proposition}
\newtheorem{cor}[subsection]{Corollary}
\newtheorem{lemma}[subsection]{Lemma}
\theoremstyle{definition}  
\newtheorem{ex}[subsection]{Example}
\newtheorem{remark}[subsection]{Remark}
\newcommand{\dfn}{\textbf} 
\newcommand{\Wedge}             {\vee}
\newcommand{\iso}               {\cong}
\newcommand{\cat}{\EuScript}    
\newcommand{\cC}{{\cat C}}
\newcommand{\cD}{{\cat D}}
\newcommand{\cP}{{\cat P}}
\newcommand{\Top}{{\cat Top}}
\newcommand{\kG}{{\mathcal G}}
\newcommand{\Set}{{\cat Set}}
\newcommand{\sSet}{{s{\cat Set}}}
\newcommand{\Grph}{{\cat Grph}}
\newcommand{\Cat}{{\cat Cat}}
\newcommand{\field}[1]  {\mathbb #1} 
\newcommand{\A}         {\field A}
\newcommand{\N}         {\field N}
\DeclareMathOperator*{\colim}{colim}
\DeclareMathOperator*{\hocolim}{hocolim}
\DeclareMathOperator{\Hom}{Hom}
\DeclareMathOperator{\ob}{ob}
\DeclareMathOperator{\coeq}{coeq}
\DeclareMathOperator{\id}{id}
\newcommand{\ra}{\rightarrow}                   
\newcommand{\lra}{\longrightarrow}              
\newcommand{\la}{\leftarrow}                    
\newcommand{\llra}[1]{\stackrel{#1}{\lra}}      
\newcommand{\fib}{\twoheadrightarrow}           
\newcommand{\inc}{\hookrightarrow}              
\newcommand{\dbra}{\rightrightarrows}           
\newcommand{\blank}{-}                          
\newcommand{\ovcat}{\downarrow}
\newcommand{\bd}[1]{\partial\Delta^{#1}}
\newcommand{\adjoint}{\rightleftarrows}
\newcommand{\bdd}[1]{\partial\Delta^{#1}}
\newcommand{\del}[1]{\Delta^{#1}}
\newcommand{\he}{\simeq}
\newcommand{\rea}[1]{|{#1}|}             
\newcommand{\ceck}[1]{\Cech(#1)}         
\newcommand{\oceck}[1]{\Cech^{o}(#1)}    
\newcommand{\oreal}[1]{\rea{\oceck{U}}}  
\newcommand{\creal}[1]{\rea{\ceck{U}}}   
\newcommand{\Cech}{\check{C}}
\newcommand{\V}[2]{V_{#1}(\A^{#2})}
\numberwithin{equation}{subsection}
\newenvironment{myequation}
  {\addtocounter{subsection}{1}\begin{eqnarray}}
  {\end{eqnarray}$\!\!$}
\newcommand{\jC}{\mathfrak{C}}
\newcommand{\jCn}{\mathfrak{C}^{nec}}
\newcommand{\jCh}{\mathfrak{C}^{hoc}}
\newcommand{\Nec}{{\cat{N}ec}}
\DeclareMathOperator{\Spi}{Spi}
\newcommand{\nosee}[1]{}
\def\tn{\textnormal}
\def\id{\tn{id}}
\def\to{\ra}
\def\To{\xrightarrow}
\def\too{\longrightarrow}
\def\fromm{\longleftarrow}
\def\from{\leftarrow}
\def\From{\xleftarrow}
\def\vect{\overrightarrow}
\def\taking{\colon}
\def\sCat{{s\Cat}}
\def\poleq{\preceq}
\def\inj{\hookrightarrow}
\def\cross{\times}
\def\wt{\widetilde}
\newcommand{\Adjoint}[4]{\xymatrix@1{#2 \ar@<.5ex>[r]^-{#1} & #3 \ar@<.5ex>[l]^-{#4}}}
\newcommand{\bpSet}{\sSet_{*,*}}
\def\ss{\subseteq}
\begin{document}

\title{Rigidification of quasi-categories}

\author{Daniel Dugger}
\author{David Spivak}

\address{Department of Mathematics\\ University of Oregon\\ Eugene, OR
97403}

\address{Department of Mathematics\\ University of Oregon\\ Eugene, OR
97403}

\email{ddugger@uoregon.edu}

\email{dspivak@uoregon.edu}

\begin{abstract}
We give a new construction for rigidifying a quasi-category into a
simplicial category, and prove that it is weakly equivalent to the
rigidification given by Lurie.  Our construction comes from the use of
necklaces, which are simplicial sets obtained by stringing simplices
together.  As an application of these methods, we use our model to
reprove some basic facts from \cite{L} about the rigidification
process.
\end{abstract}

\maketitle

\tableofcontents

\section{Introduction}
Quasi-categories are a certain generalization of categories, in which
one has not only $1$-morphisms but $n$-morphisms for every natural
number $n$.  They have been extensively studied by Cordier and Porter \cite{CP}, by Joyal \cite{J1}, 
\cite{J2}, and by Lurie \cite{L}.  If $K$ is a quasi-category and $x$
and $y$ are two objects of $K$, then one may associate a ``mapping
space'' $K(x,y)$ which is a simplicial set.  There are many different
constructions for these mapping spaces, but in \cite{L} one particular
model is given for which there are composition maps
$K(y,z)\times K(x,y)\ra K(x,z)$ giving rise to a simplicial category.
This simplicial category is denoted $\jC(K)$, and it may be thought of
as a {\it rigidification\/} of the quasi-category $K$.  It is proven
in \cite{L} that the homotopy theories of quasi-categories and
simplicial categories are equivalent via this functor.

In this paper we introduce some new models for the mapping spaces
$K(x,y)$, which are particularly easy to describe and particularly
easy to use---in fact they are just the nerves of ordinary categories
(i.e., 1-categories).  Like Lurie's model, our models admit
composition maps giving rise to a simplicial category; so we are
giving a new method for rigidifying quasi-categories.  We prove that
our construction is homotopy equivalent (as a simplicial category) to
Lurie's $\jC(K)$.  Moreover, because our mapping spaces are
nerves of categories there are many standard tools available for
analyzing their homotopy types.  We demonstrate the effectiveness of
this by giving new proofs of some basic facts about the functor
$\jC(\blank)$.

One payoff of this approach is that it is possible to give a streamlined proof
of Lurie's Quillen equivalence between the homotopy theory of
quasi-categories and simplicial categories.  This requires, however, a
more detailed study of the model category structure on
quasi-categories.  We will take this up in a sequel \cite{DS} and prove the
Quillen equivalence there.

\medskip

\subsection{Mapping spaces via simplicial categories}\label{subsec:mapping in sCat}
Now we describe our results in more detail.  A quasi-category
is a simplicial set that has the right-lifting-property 
with respect to inner horn inclusions $\Lambda^n_i\to\Delta^n,\;\;\;
0<i<n$.  It turns out that there is a unique model structure on
$\sSet$ where the cofibrations are the monomorphisms and the fibrant
objects are the quasi-categories; this will be called the {\it Joyal
model structure\/} and denoted $\sSet_J$.  The weak equivalences in
$\sSet_J$ will here be called {\it Joyal equivalences\/}.  The
existence of the Joyal model structure will not be needed in this
paper, although it provides some useful context.  The notions of
quasi-categories and Joyal equivalences, however, will be used in
several places.  See 
Section~\ref{subsec:quasi-categories} for additional background.

There is a functor, constructed in \cite{L}, which sends any
simplicial set $K$ to a corresponding simplicial category $\jC(K)\in\sCat$.  
This is the left adjoint in a Quillen pair
\[ \jC \colon \sSet_J \adjoint s\Cat\colon N, \]
where $N$ is called the \dfn{coherent nerve}.  The functor $N$ can be
described quite explicitly (see Section~\ref{se:background}), but the
functor $\jC$ is in comparison a little mysterious.  In \cite{L} each
$\jC(K)$ is defined as a certain colimit in the category $s\Cat$, but
colimits in $\sCat$ are notoriously difficult to understand.

Our main goal in this paper is to give a different model for the
functor $\jC$. Define a \dfn{necklace} (which we picture as
``unfastened") to be a simplicial set of the form
\[ \Delta^{n_0}\Wedge\Delta^{n_1}\Wedge \cdots \Wedge\Delta^{n_k}\]
where each $n_i\geq 0$ and where in each wedge the final vertex of
$\Delta^{n_i}$ has been glued to the initial vertex of
$\Delta^{n_{i+1}}$. 

The first and last vertex in any necklace $T$ are denoted $\alpha_T$
and $\omega_T$, respectively (or just $\alpha$ and $\omega$ if $T$ is
obvious from context).  If $S$ and $T$ are two necklaces, then by
$S\Wedge T$ we mean the necklace obtained in the evident way, by
gluing the final vertex $\omega_S$ of $S$ to the initial vertex
$\alpha_T$ of $T$.  Write $\Nec$ for the category whose objects are
necklaces and where a morphism is a map of simplicial sets which preserves the
initial and final vertices.  

Let $S\in \sSet$ and let $a,b\in S_0$.  If $T$ is a necklace, we use
the notation $$T\to S_{a,b}$$ to indicate a morphism of simplicial
sets $T\ra S$ which sends $\alpha_T$ to $a$ and $\omega_T$ to $b$.  Let
$(\Nec\ovcat S)_{a,b}$ denote the evident category whose objects are
pairs $[T,T\ra S_{a,b}]$ where $T$ is a necklace.  Note that for
$a,b,c\in S$, there is a functor
\[ (\Nec\ovcat S)_{b,c}\times (\Nec\ovcat S)_{a,b} \lra (\Nec\ovcat
S)_{a,c}
\]
which sends the pair $[T_2,T_2\ra S_{b,c}] \times [T_1,T_1\ra S_{a,b}]$ to
$[T_1\Wedge T_2,T_1\Wedge T_2\ra S_{a,c}]$.

Let $\jCn(S)$ be the function which assigns to any $a,b\in S_0$ the
simplicial set $\jCn(S)(a,b)=N(\Nec\ovcat S)_{a,b}$ (the classical nerve
of the $1$-category $(\Nec\ovcat S)_{a,b}$).  The above pairings of
categories induces pairings on the nerves, which makes $\jCn(S)$ into
a simplicial category with object set $S_0$.  

\begin{thm}
\label{th:main1}
There is a natural zig-zag of weak equivalences of simplicial
categories between $\jCn(S)$
and $\jC(S)$, for all simplicial sets $S$.
\end{thm}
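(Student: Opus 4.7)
The plan is to introduce an intermediate simplicial category $\jCh(S)$ (a ``homotopy coherent'' necklace construction) giving a zig-zag
\[ \jCn(S) \xleftarrow{\;\sim\;} \jCh(S) \xrightarrow{\;\sim\;} \jC(S). \]
Both $\jCn(S)$ and $\jC(S)$ share the object set $S_0$, so I would define $\jCh(S)$ with the same objects and with mapping spaces given by the nerve of a Grothendieck-type category whose objects are pairs $(T \to S_{a,b},\, \sigma)$, where $T$ is a necklace and $\sigma$ is a simplex of $\jC(T)(\alpha_T,\omega_T)$. Equivalently, $\jCh(S)(a,b)$ realizes the homotopy colimit $\hocolim_{(\Nec\ovcat S)_{a,b}} \jC(T)(\alpha_T,\omega_T)$. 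Composition is inherited from wedging of necklaces together with the natural isomorphism $\jC(T_1\Wedge T_2)(\alpha,\omega) \iso \jC(T_1)(\alpha,v) \times \jC(T_2)(v,\omega)$ that follows from $\jC$ being a left adjoint.

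The left-hand map $\jCh(S) \to \jCn(S)$ forgets the simplex data $\sigma$. Since $\jC$ preserves pushouts of simplicial sets, for a necklace $T = \Delta^{n_0}\Wedge\cdots\Wedge\Delta^{n_k}$ one obtains
\[ \jC(T)(\alpha_T,\omega_T) \iso \prod_{l=0}^{k}\jC(\Delta^{n_l})(\alpha_l,\omega_l), \]
and each factor is Lurie's cube, which is contractible. Hence the forgetful functor between the Grothendieck constructions has contractible fibers, and a straightforward application of Quillen's Theorem~A shows that the induced map on nerves is a weak equivalence, naturally in $a,b,S$ and compatibly with the composition structure described above.

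For the right-hand map $\jCh(S)\to\jC(S)$, defined by sending $(T\to S,\sigma)$ to the image of $\sigma$ under $\jC(T)(\alpha,\omega)\to\jC(S)(a,b)$, I would first verify the statement on standard simplices $S=\Delta^n$. Here one identifies a homotopy-final subcategory of $(\Nec\ovcat\Delta^n)_{i,j}$ (for instance, necklaces whose map to $\Delta^n$ is injective on each bead) and directly matches the resulting hocolim with the cube $\jC(\Delta^n)(i,j)$. For general $S$ I would induct along the skeletal filtration: $\jC$ preserves pushouts, and one must establish an analogous homotopy pushout property for $\jCh$ by analyzing how a necklace mapping into $S \cup_{\partial\Delta^n}\Delta^n$ can be refined so that each bead lies entirely in one of the two pieces.

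The main obstacle is this last step --- the skeletal induction for $\jCh \to \jC$. The combinatorial delicacy is that an arbitrary necklace $T \to S \cup_{\partial\Delta^n}\Delta^n$ need not respect the cell decomposition, so one must produce a cofinal (or homotopy-final) subcategory of ``compatible'' subdivided necklaces in order for the inductive comparison to go through. All other ingredients --- contractibility of cubes, functoriality of the constructions, and the base calculation on $\Delta^n$ --- become essentially routine once this combinatorics of necklace subdivision is under control.
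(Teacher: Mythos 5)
Your zig-zag is the same one the paper uses: the intermediate object $\jCh(S)$, with $\jCh(S)(a,b)$ the homotopy colimit (diagonal of the evident bisimplicial set) of $T\mapsto\jC(T)(\alpha,\omega)$ over $(\Nec\ovcat S)_{a,b}$, and your argument for the left-hand arrow $\jCh(S)\to\jCn(S)$ (objectwise contractibility of $\jC(T)(\alpha,\omega)$, hence an equivalence onto the hocolim of the constant point diagram) is exactly the paper's. One small caveat there: the decomposition $\jC(T_1\Wedge T_2)(\alpha,\omega)\iso\jC(T_1)(\alpha,v)\times\jC(T_2)(v,\omega)$ does not simply ``follow from $\jC$ being a left adjoint''---mapping spaces of pushouts in $\sCat$ are not formal---and the paper has to establish it, together with the cube/contractibility statement, via the explicit identification $\jC(T)\iso NC_T$ in Proposition~\ref{prop:mapping in necklaces}.

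The genuine gap is in your right-hand arrow $\jCh(S)\to\jC(S)$. Your plan is a skeletal induction whose crucial step---finding a homotopy-final subcategory of necklaces compatible with the cell attachment $S'\cup_{\bd{n}}\Delta^n$---you explicitly leave unresolved, and this is precisely where the real content lies. Worse, the inductive step would require you to identify mapping spaces in the $\sCat$-pushout $\jC(S')\amalg_{\jC(\bd{n})}\jC(\Delta^n)$, and the opacity of such colimits in $\sCat$ is the very difficulty the theorem is designed to circumvent, so the induction re-encounters the original problem rather than reducing it. The paper takes a different, non-inductive route: it first proves the \emph{exact} isomorphism $\jC(S)(a,b)\iso\colim_{T\in(\Nec\ovcat S)_{a,b}}\jC(T)(\alpha,\omega)$ for every $S$ (Proposition~\ref{pr:jC-main}), using that a necklace is terminal in its own overcategory together with $\jC(S)\iso\colim_{\Delta^n\to S}\jC(\Delta^n)$; this yields the combinatorial description of $\jC(S)(a,b)$ by flagged triples (Corollary~\ref{co:description1}). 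Then the comparison hocolim-versus-colim is done level-wise: level $l$ of $\jCh(S)(a,b)$ is the nerve of the category $F_l$ of flagged necklaces over $S_{a,b}$, level $l$ of $\jC(S)(a,b)$ is $\pi_0(NF_l)$, and Proposition~\ref{pr:Fn-hodiscrete} shows $NF_l$ is homotopy discrete---proved by flankification and the fact that each component contains a unique flanked, totally nondegenerate triple which is a terminal object (Corollary~\ref{co:unique-triple}). To repair your proposal you would either need to supply the necklace-subdivision cofinality argument you flag as the main obstacle (and a homotopy-pushout statement for $\jCh$ in $\sCat$, including the relevant cofibrancy bookkeeping), or replace the induction by a global argument of the paper's type.
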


In the above result, the weak equivalences for simplicial categories
are the so-called ``DK-equivalences'' used by Bergner in
\cite{Bergner}.  See Section~\ref{se:background} for this notion.

In this paper we also give an explicit description of the mapping
spaces in the simplicial
category $\jC(S)$.  A rough statement is given below, but see
Section~\ref{sec:jC} for more details.

\begin{thm}
Let $S$ be a simplicial set and let $a,b\in S$.  Then the mapping
space $X=\jC(S)(a,b)$ is the simplicial set whose $n$-simplices
are triples subject to a certain equivalence relations.  The triples
consist of a necklace $T$, a map $T\ra S_{a,b}$, and a flag
$\vect{T}=\{ T^0\subseteq\cdots\subseteq T^n \}$ of vertices in $T$.  
For the equivalence relation, see Corollary~\ref{co:description1}.
The face maps and
degeneracy maps are obtained by removing or repeating elements $T^i$
in the flag.

\par\noindent
The pairing
\[ \jC(S)(b,c) \times \jC(S)(a,b) \lra \jC(S)(a,c) 
\]
sends the pair of $n$-simplices $([T\ra S; \vect{T^i}],[U\ra S,\vect{U^i}])$ to 
$[U\Wedge T\ra S,\vect{U^i\cup T^i}]$.
\end{thm}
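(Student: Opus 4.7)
The plan is to first describe $\jC(T)(\alpha_T,\omega_T)$ explicitly for any necklace $T$, and then to use the fact that $\jC$ preserves colimits (being a left adjoint) to bootstrap to general $S$. Recall from \cite{L} that $\jC(\Delta^n)(0,n)$ is the nerve of the poset of subsets of $\{0,1,\ldots,n\}$ containing both $0$ and $n$, ordered by inclusion. Because the only way to compose morphisms from $\alpha_T$ to $\omega_T$ through the beads of $T=\Delta^{n_0}\Wedge\cdots\Wedge\Delta^{n_k}$ is to traverse all the joining vertices in order, one expects a composition-induced isomorphism
$$\jC(T)(\alpha_T,\omega_T)\iso \prod_{i=0}^{k}\jC(\Delta^{n_i})(\alpha_i,\omega_i),$$
whose $p$-simplices are precisely chains $T^0\subseteq\cdots\subseteq T^p$ of vertex-subsets of $T$, each containing $\alpha_T$, $\omega_T$, and all the joining vertices.

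Given this computation, I would define a natural map from the proposed model to $\jC(S)(a,b)$ by sending a triple $(T,f\taking T\to S_{a,b},\vect{T})$ to the image under $\jC(f)$ of the $p$-simplex of $\jC(T)(\alpha_T,\omega_T)$ determined by the flag. Verifying that this assignment factors through the equivalence relation of Corollary~\ref{co:description1}, roughly generated by the rule $(T,f,\vect{T})\sim(T',f',\vect{T'})$ whenever there is a necklace morphism $g\taking T\to T'$ with $f=f'\circ g$ that carries the flag data compatibly, reduces to the functoriality of $\jC(g)\taking \jC(T)\to \jC(T')$ applied to the flag-level simplices.

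The main obstacle is showing that this map is bijective on $p$-simplices. Surjectivity is the delicate point: every $p$-simplex of $\jC(S)(a,b)$ must come from some necklace triple. My strategy is to exploit the colimit presentation $\jC(S)=\colim_{\Delta^n\to S}\jC(\Delta^n)$ in $\sCat$, which expresses morphisms in $\jC(S)$ as formal composites of morphisms from individual simplices, subject to identifications coming from simplex-inclusions. Concretely, such a formal composite is exactly the data of a necklace in $S$ together with a $p$-simplex in the mapping space of $\jC$ applied to that necklace, and the imposed identifications should generate precisely the equivalence relation of Corollary~\ref{co:description1}. From this, both surjectivity and the precise form of the equivalence relation should follow simultaneously.

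Finally, the composition formula follows from the product description applied to $U\Wedge T$: under the isomorphism $\jC(U\Wedge T)(\alpha,\omega)\iso \jC(U)(\alpha,\omega)\times\jC(T)(\alpha,\omega)$, the composition pairing sends a pair of flags $(\vect{U},\vect{T})$ to the flag whose $i$-th level is $U^i\cup T^i$ inside the vertex set of $U\Wedge T$, giving the stated formula $[U\Wedge T\to S_{a,c},\vect{U^i\cup T^i}]$.
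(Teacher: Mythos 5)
Your steps 1 and 2 are essentially sound and align with the paper: the flag description of $\jC(T)(\alpha,\omega)$ for a necklace $T$, via composition across the beads, is exactly Proposition~\ref{prop:mapping in necklaces} and Corollary~\ref{cor:mapping space is cube} (proved there by writing $T$ as a colimit of its beads over points), and defining the comparison map by applying $\jC(f)$ to the flag-simplex and checking compatibility with the relation is routine. The gap is in step 3, which is where the entire content of the theorem lives. You appeal to the presentation $\jC(S)=\colim_{\Delta^n\to S}\jC(\Delta^n)$ as ``expressing morphisms in $\jC(S)$ as formal composites of morphisms from individual simplices, subject to identifications,'' and then assert that these identifications ``should generate precisely'' the equivalence relation of Corollary~\ref{co:description1}. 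But mapping spaces of a colimit in $\sCat$ are not simply colimits of the mapping spaces of the pieces, and no such explicit presentation is established or cited. To make it precise one must (i) produce a generators-and-relations description of the $\sCat$-colimit that is compatible with the simplicial operators, (ii) handle the fact that formal composites involve simplices of $\jC(\Delta^k)(i,j)$ for arbitrary $i\leq j$, not just endpoints of beads, and (iii) -- crucially -- verify that the imposed identifications are no larger than the relation generated by necklace maps over $S$, i.e.\ that your comparison map is injective. Saying that surjectivity and the exact form of the relation ``follow simultaneously'' from the formal-composite picture is essentially a restatement of the theorem rather than an argument for it.

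For comparison, the paper proves exactly this point (Proposition~\ref{pr:jC-main}) while avoiding any presentation of colimits in $\sCat$: it makes $E_S(a,b)=\colim_{T\in(\Nec\ovcat S)_{a,b}}\jC(T)(\alpha,\omega)$ into a simplicial category functorial in $S$, notes that $E_S\to\jC(S)$ is an isomorphism when $S$ is itself a necklace because $\id_S$ is a terminal object of $(\Nec\ovcat S)_{a,b}$, and then plays the square comparing $\colim_{\Delta^k\to S}E_{\Delta^k}$ with $\colim_{\Delta^k\to S}\jC(\Delta^k)$ against the naturality of $E$ applied to a representing map $T\to S$ to get injectivity and surjectivity of $E_S(a,b)\to\jC(S)(a,b)$; Corollary~\ref{co:description1} then just unwinds the coequalizer defining $E_S(a,b)$ to obtain your triples, and the pairing is built into $E_S$ via $T\Wedge U$. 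To repair your proposal you would need either to carry out the generators-and-relations analysis of $\sCat$-colimits in full, or to replace step 3 with a bootstrapping argument of this functorial kind.
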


Theorem~\ref{th:main1} turns out to be very useful in the study of the
functor $\jC$.  There are many tools in classical homotopy theory for
understanding the homotopy types of nerves of $1$-categories, and via
Theorem~\ref{th:main1} these tools can be applied to understand mapping
spaces in $\jC(S)$.  We demonstrate this technique in
Section~\ref{se:properties} by proving, in a new way, the following
two properties of $\jC$ found in \cite{L}.

\begin{thm} Let $X$ and $Y$ be simplicial sets.
\begin{enumerate}[(a)]
\item The natural map
$\jC(X\times Y) \ra \jC(X)\times \jC(Y)$ is a weak equivalence of
simplicial categories;
\item If $X\ra Y$ is a Joyal equivalence  then $\jC(X)\ra
\jC(Y)$ is a weak equivalence.
\end{enumerate}
\end{thm}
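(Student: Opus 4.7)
The plan is to apply Theorem~\ref{th:main1} throughout, replacing $\jC$ by the necklace model $\jCn$ whose mapping spaces are nerves of $1$-categories, so that classical tools such as Quillen's Theorem A apply directly.

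For part (a), it suffices to show that $\jCn(X\times Y) \to \jCn(X) \times \jCn(Y)$ is a DK-equivalence. Both sides have object set $X_0\times Y_0$, so the question reduces to whether each map on mapping spaces is a weak equivalence. For $a = (a_1, a_2)$ and $b = (b_1, b_2)$, the relevant map is the nerve of the functor
\[ F \colon (\Nec \ovcat X\times Y)_{a,b} \lra (\Nec \ovcat X)_{a_1,b_1} \times (\Nec \ovcat Y)_{a_2,b_2} \]
sending $[T, g\colon T \to X\times Y]$ to $([T,\pi_X g], [T,\pi_Y g])$, using the same underlying necklace $T$ in both slots. I would apply Quillen's Theorem A: for $d = ([T_1,f_1],[T_2,f_2])$ in the target, the comma category $F\downarrow d$ is naturally identified with $(\Nec \ovcat T_1\times T_2)_{(\alpha,\alpha),(\omega,\omega)}$, since an object $[T,g]$ together with a morphism to $d$ amounts to a necklace $T$ equipped with a pair of compatible necklace maps $T \to T_1$ and $T \to T_2$ (the map $g$ being determined as $(f_1, f_2)$ composed with these). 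The key technical step is to show that this comma category has contractible nerve; I would attempt this by identifying a cofinal subcategory of ``staircase'' necklaces in $T_1 \times T_2$ (necklaces whose underlying path in the vertex lattice is monotone) and verifying contractibility by an explicit deformation argument on the nerve.

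For part (b), I would again reduce via Theorem~\ref{th:main1} to showing that $\jCn$ sends Joyal equivalences to DK-equivalences. Any Joyal equivalence factors in $\sSet_J$ as a trivial cofibration followed by a trivial fibration. Trivial fibrations in $\sSet_J$ coincide with classical trivial fibrations (since the cofibrations agree, being the monomorphisms in both structures), and for such a map one can split off a section and argue directly that the induced map of simplicial categories is a DK-equivalence. For trivial cofibrations, a small-object argument reduces to pushouts of inner horn inclusions $\Lambda^n_i \hookrightarrow \Delta^n$ for $0 < i < n$; once one checks that $\jCn$ is sufficiently compatible with such pushouts on the level of necklace categories, the essential case becomes the claim that $\jCn(\Lambda^n_i) \to \jCn(\Delta^n)$ is a DK-equivalence. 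I would verify this by comparing $(\Nec \ovcat \Lambda^n_i)_{0,n}$ with $(\Nec \ovcat \Delta^n)_{0,n}$ via Quillen's Theorem A, using the inner-ness of $i$ to show that any necklace map into $\Delta^n$ admits a cofinal replacement factoring through $\Lambda^n_i$.

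The main technical obstacle in both parts is the combinatorial analysis of categories of necklaces. Since necklaces admit no categorical product and no canonical refinement operation, proving contractibility of the relevant comma categories requires hand-built cofinal subcategories tailored to the situation. In part (a) this is complicated by the fact that $T_1 \times T_2$ is a genuinely higher-dimensional simplicial set rather than a necklace; in part (b) one must track carefully how necklace morphisms into $\Delta^n$ interact with the missing face of $\Lambda^n_i$, which is where the hypothesis $0 < i < n$ enters decisively.
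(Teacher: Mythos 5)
Part (b) of your proposal contains a genuine error. You reduce Joyal trivial cofibrations, via the small object argument, to pushouts of inner horn inclusions $\Lambda^n_i \inc \Delta^n$ with $0<i<n$. But the trivial cofibrations of $\sSet_J$ are \emph{not} generated by the inner horns: every map in the saturation of the inner horn inclusions is bijective on vertices, whereas, for example, $\{0\}\inc E^1$ is a monomorphism and a Joyal equivalence (since $E^1\ra \ast$ has the right lifting property with respect to all monomorphisms) and is not bijective on vertices. So the class you would actually need to handle is strictly larger than the inner anodyne maps, and your reduction collapses exactly at the point where the hypothesis ``Joyal equivalence'' has to be confronted. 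This is why the paper takes a different route for (b), one that never invokes the Joyal model structure at all: it shows $\jC(X\times E^1)$ is a cylinder object for $\jC(X)$ in $\sCat$ (using part (a), the contractibility of $\jC(E^n)$, and the fact that $\jC$ preserves cofibrations), checks that $N\cD$ is a quasi-category for fibrant $\cD$, and then uses the adjunction to identify $[\jC(X),\cD]\iso [X,N\cD]_{E^1}$; the definition of Joyal equivalence then gives the bijection on homotopy classes directly, and cofibrancy of $\jC(S)$, $\jC(S')$ finishes the argument. (Your treatment of trivial fibrations via a section is repairable, but it too ends up needing the $E^1$-cylinder machinery, so it does not save the overall plan.)

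For part (a), your reduction via Theorem~\ref{th:main1} and Quillen's Theorem A is legitimate, and your identification of the comma category with $(\Nec\ovcat T_1\times T_2)_{\alpha,\omega}$ is correct; but the contractibility of its nerve is precisely the hard technical content, and your ``staircase necklaces plus an explicit deformation'' sketch is not a proof. The nerve in question is $\jCn(T_1\times T_2)(\alpha,\omega)$, and its contractibility is equivalent (via Theorem~\ref{th:main}) to the statement that products of necklaces are strongly ordered, which the paper establishes in the appendix (Lemma~\ref{lemma:strongly ordered} and Proposition~\ref{prop:product of necklaces}) by a nontrivial induction over beads involving simple inclusions, latching maps, and a homotopy colimit decomposition indexed by chains of joints. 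The paper's proof of (a) itself sidesteps Theorem A on the product-comparison functor by enlarging the indexing category to the gadget category of products of necklaces, where the comparison admits explicit natural transformations $\id\ra\theta\phi$ and $\phi\theta\ra\id$ built from diagonals and projections; but both routes hinge on the same contractibility statement, which your proposal leaves essentially unproven.
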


%

\subsection{Notation and Terminology}
We will sometimes use $\sSet_K$ to refer to the usual model structure
on simplicial sets, which we'll term the {\it Kan model structure\/}.
The fibrations are the Kan fibrations, the weak equivalences (called
Kan equivalences from now on) are the maps which induce homotopy
equivalences on geometric realizations, and the cofibrations are the
monomorphisms.

We will often be working with the category $\bpSet=(\bdd{1}\ovcat\sSet)$.
Note that $\Nec$ is a
full subcategory of $\bpSet$.

An object of $\bpSet$ is a simplicial set $X$ with two distinguished
points $a$ and $b$.  We sometimes (but not always) write $X_{a,b}$ for $X$, to remind
us that things are taking place in $\bpSet$ instead of $\sSet$.  

If $\cC$ is a (simplicial) category containing objects $X$ and $Y$, we
write $\cC(X,Y)$ for the (simplicial) set of morphisms from $X$ to $Y$.
\section{Background on quasi-categories}
\label{se:background}

In this section we give the background on quasi-categories and
simplicial categories needed in the rest of the paper.

\subsection{Simplicial categories}
A simplicial category is a category enriched over simplicial sets; it
can also be thought of as a simplicial object of $\Cat$ in which the
categories in each level have the same object set.  We use $\sCat$ to
denote the category of simplicial categories.  A cofibrantly-generated
model structure on
$\sCat$ was developed in \cite{Bergner}.  A map of simplicial
categories $F\colon\cC \ra \cD$ is a weak equivalence (sometimes called a
{\it DK-equivalence\/}) if
\begin{enumerate}[(1)]
\item For all $a,b\in \ob\cC$, the map $\cC(a,b) \ra \cD(Fa,Fb)$ is a
Kan equivalence of simplicial sets; 
\item The induced functor of ordinary categories $\pi_0 F\colon \pi_0
\cC \ra \pi_0\cD$ is surjective on isomorphism classes.  
\end{enumerate}  
Likewise, the map $F$ is a fibration if
\begin{enumerate}[(1)]
\item For all $a,b\in \ob\cC$, the map $\cC(a,b) \ra \cD(Fa,Fb)$ is a
Kan fibration of simplicial sets;
\item For all $a \in \ob\cC$ and $b\in \ob\cD$, if $e\colon Fa \ra b$
is a map in $\cD$ which becomes an isomorphism in $\pi_0\cD$, then
there is an object $b'\in \cC$ and a map $e'\colon a\ra b'$ such that
$F(e')=e$ and $e'$ becomes an isomorphism in $\pi_0\cC$.
\end{enumerate}  
The cofibrations are the maps which have the
left lifting property with respect to the acyclic fibrations.

\begin{remark}
The second part of the fibration condition seems a little awkward at
first.  In this paper we will actually have no need to think about fibrations
of simplicial categories, but have included the definition for
completeness.  

Bergner writes down sets of generating cofibrations and acyclic
cofibrations in \cite{Bergner}. 
\end{remark}

\subsection{Quasi-categories and Joyal equivalences}\label{subsec:quasi-categories}
As mentioned in the introduction, 
there is a unique model structure on
$\sSet$ with the properties that
\begin{enumerate}[(i)]
\item The cofibrations are the monomorphisms;
\item The fibrant objects are the quasi-categories.
\end{enumerate}

It is easy to see that there is at most one such structure.  To do
this, let $E^1$ be the $0$-coskeleton---see \cite{AM}, for
instance---of the set $\{0,1\}$ (note that the geometric realization
of $E^1$ is essentially the standard model for $S^\infty$).  The map
$E^1\ra *$ has the right lifting property with respect to all
monomorphisms, and so it will be an acyclic fibration in this
structure.  Therefore $X\times E^1\ra X$ is also an acyclic fibration
for any $X$, and hence $X\times E^1$ will be a cylinder object for
$X$.  Since every object is cofibrant, a map $A\ra B$ will be a weak
equivalence if and only if it induces bijections $[B,Z]_{E^1} \ra
[A,Z]_{E^1}$ for every quasi-category $Z$, where $[A,Z]_{E^1}$ means
the coequalizer of $\sSet(A\times E^1,Z)\dbra \sSet(A,Z)$.  Therefore
the weak equivalences are determined by properties (i)--(ii), and
since the cofibrations and weak equivalences are determined so are the
fibrations.

Motivated by the above discussion, we define a map of simplicial sets $A\ra B$ to be a
\dfn{Joyal equivalence} if it induces bijections
$[B,Z]_{E^1}\to[A,Z]_{E^1}$ for every quasi-category $Z$.

That there actually exists a model stucture satisfying (i) and (ii) is 
not so clear, but it was estalished by Joyal (see \cite{J1} or \cite{J2}, or
\cite{L} for another proof).  For this reason, we will call it the
\dfn{Joyal model structure} and denote it by $\sSet_J$.  The weak
equivalences are defined differently in both \cite{J2} and \cite{L},
but of course turn out to be equivalent to the definition we have
adopted here.  

In the rest of the paper we will never use the Joyal model structure,
only the notion of Joyal equivalence.

\subsection{Background on $\jC$ and $N$}

Given a simplicial category $S$, one can
construct a simplicial set called the {\it coherent
nerve\/} of $S$ \cite[1.1.5]{L}.  We will now describe this construction.

Recall the adjoint functors $F\colon \Grph \adjoint \Cat\colon U$.
Here $\Cat$ is the category of $1$-categories, and $\Grph$ is the
category of graphs: a graph consists of an object set and morphism
sets, but no composition law.  The functor $U$ is a forgetful functor,
and $F$ is a free functor.  Given any category $\cC$ we may then
consider the comonad resolution $(FU)_\bullet(\cC)$ given by
$[n]\mapsto (FU)^{n+1}(\cC)$.  This is a simplicial category.

There is a functor of simplicial categories $(FU)_\bullet(\cC) \ra
\cC$ (where the latter is considered a discrete simplicial category).
This functor induces a weak equivalence on all mapping spaces, a fact
which can be seen by applying $U$, at which point the comonad
resolution picks up a contracting homotopy.  Note that this means that
the simplicial mapping spaces in $(FU)_\bullet(\cC)$ are all homotopy
discrete.

Recall that $[n]$ denotes the category $0\ra 1 \ra \cdots \ra n$, where
there is a unique map from $i$ to $j$ whenever $i\leq j$.  
We let $\jC(\Delta^n)$ denote the simplicial category
$(FU)_{\bullet}([n])$.  The mapping spaces in this simplicial category
can be analyzed completely, and are as follows.  For each $i$ and $j$,
let $P_{i,j}$ denote the poset of all subsets of $\{i,i+1,\ldots,j\}$
containing $i$ and $j$ (ordered by inclusion).  Note that the nerve of
$P_{i,j}$ is isomorphic to the cube $(\Delta^1)^{j-i-1}$ if $j>i$,
$\Delta^0$ if $j=i$, and the emptyset if $j<i$.  The nerves of the $P_{i,j}$'s
naturally form the mapping spaces of a simplicial category with object
set $\{0,1,\ldots,n\}$, using the pairings $P_{j,k}\times P_{i,j} \ra
P_{i,k}$ given by union of sets.  

\begin{lemma}
\label{le:C(Delta^n)}
There is an isomorphism of simplicial categories $\jC(\Delta^n)\iso
NP$.  
\end{lemma}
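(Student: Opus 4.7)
My plan is to exhibit a natural bijection between $k$-simplices of $\jC(\Delta^n)(i,j) = (FU)^{k+1}([n])(i,j)$ and $k$-chains $T_0 \subseteq T_1 \subseteq \cdots \subseteq T_k$ in $P_{i,j}$, and then verify compatibility with face maps, degeneracies, and composition so that the bijections assemble into an isomorphism of simplicial categories. Throughout I use the convention that $\Grph$ means reflexive graphs and $F$ identifies the distinguished identity edges with categorical identities; equivalently, morphisms in the free category $FG$ are the strictly non-trivial paths in $G$.

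For the base case $k=0$, a morphism $i\to j$ in $FU([n])$ is a strictly increasing sequence $i = v_0 < v_1 < \cdots < v_m = j$ (since edges of $U[n]$ are determined by their endpoints), which is exactly an element of $P_{i,j}$; composition by concatenation matches composition in $P$ by union. For the inductive step, write $(FU)^{k+1}[n] = FU((FU)^k[n])$. A morphism $i\to j$ is then a path $i = a_0 < a_1 < \cdots < a_m = j$ together with inner morphisms $f_\ell \in (FU)^k[n](a_{\ell-1}, a_\ell)$. By induction each $f_\ell$ is a $(k-1)$-chain $T_0^\ell \subseteq \cdots \subseteq T_{k-1}^\ell$ in $P_{a_{\ell-1},a_\ell}$. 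I assemble this into the $k$-chain $T_0 \subseteq T_1 \subseteq \cdots \subseteq T_k$ in $P_{i,j}$ by setting $T_0 = \{a_0, \ldots, a_m\}$ and $T_r = \bigcup_\ell T_{r-1}^\ell$ for $r \geq 1$; the inverse recovers the $a_\ell$'s as the elements of $T_0$ in order and sets $T_r^\ell := T_{r+1} \cap \{a_{\ell-1}, \ldots, a_\ell\}$.

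To verify compatibility with face maps, recall that $d_i = (FU)^i \, \epsilon \, (FU)^{k+1-i}$ in the comonad resolution. For $d_0$ (outer counit), composing the path in $(FU)^{k+1}[n]$ is by induction componentwise union, yielding the chain with $T_0$ removed. For $d_i$ with $i \geq 1$, I use the recursion $d_i = FU\bigl( d_{i-1}^{(k-1)} \bigr)$ at the outer level: by induction the inner face operation removes $T_{i-1}^\ell$ from each inner chain, and passing back through the assembly bijection shows that this removes $T_i$ from the outer $k$-chain. Degeneracies are handled dually via the comultiplication $FU \to FUFU$, which inserts a repeated entry into the chain. Finally, composition in $(FU)^{k+1}[n]$, being concatenation of outer paths combined with the inductive componentwise union on inner data, becomes componentwise union of $k$-chains, matching the composition in $NP$ induced by union of subsets.

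The main obstacle is the bookkeeping for intermediate face maps. I expect to handle this cleanly by keeping both perspectives $(FU)^{k+2}[n] = FU((FU)^{k+1}[n]) = (FU)^{k+1}(FU[n])$ available throughout the induction so that the extremal face maps (outermost or innermost counit) are directly computable at every stage, thereby reducing any intermediate $d_i$ to the two extreme cases already verified.
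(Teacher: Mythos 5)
Your proposal is correct, and it reaches the isomorphism by a somewhat different route than the paper. You induct on the number of $FU$ factors, decomposing a $k$-simplex of $\jC(\Delta^n)(i,j)$ via $(FU)^{k+1}=FU\circ (FU)^k$ into an outer path $i=a_0<\cdots<a_m=j$ with inner $(k-1)$-chains, and you track the simplicial operators through the comonadic structure maps ($d_0$ as the outer counit, i.e.\ composition, which by your composition-compatibility is componentwise union; $d_i=FU(d_{i-1})$ for $i\geq 1$; degeneracies via $F\eta U$) --- note only the harmless indexing slip that $(FU)^i\,\epsilon\,(FU)^{k+1-i}$ is a face out of $(FU)^{k+2}$, i.e.\ out of simplicial level $k+1$. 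The paper instead describes $(FU)^{\ell+1}([n])(i,j)$ all at once, as parenthesizations of the sequence $i,\ldots,j$ in which each element lies in exactly $\ell+1$ parentheses, with faces and degeneracies deleting or repeating all parentheses of a fixed rank; it then sidesteps most of the face-map bookkeeping that occupies your last paragraph by observing that an $\ell$-simplex is determined by its set of vertices, and that restricting to the parentheses of a single rank yields a subset $S_r$, so the simplex is exactly the flag $S_0\subseteq\cdots\subseteq S_\ell$. Your inductive assembly/disassembly bijection buys a fully explicit verification of the simplicial identities (and of composition as componentwise union) at the cost of the recursion bookkeeping, while the paper's ``determined by its vertices'' observation buys brevity. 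A point in your favor: identifying $FU([n])(i,j)$ with $P_{i,j}$ forces identity morphisms not to count as generating edges (otherwise that set is infinite), and you make the reflexive-graph convention explicit, whereas the paper leaves it implicit in the phrase ``free compositions of sequences of morphisms.''
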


\begin{remark}
The proof of the above lemma is a bit of an aside from the main thrust
of the paper, so it is given in Appendix~\ref{se:appendix}.  In fact
we could have {\it defined\/} $\jC(\Delta^n)$ to be $NP$, which is
what Lurie does in \cite{L}, and avoided the lemma entirely; the
construction $(FU)_\bullet([n])$ will never again be used in this
paper.  Nevertheless, the identification of $NP$ with
$(FU)_{\bullet}([n])$ seems informative to us.
\end{remark}

For any simplicial category $\cD$, the \dfn{coherent nerve} of $\cD$
is the simplicial set $N\cD$ given by
\[ [n]\mapsto s\Cat(\jC(\Delta^n),\cD).\]
It was proven by Lurie \cite{L} that $N\cD$ is
always a quasi-category; see also Lemma~\ref{lemma:N preserves fibrants} below.

The functor $N$ has a left adjoint denoted $\jC\colon \sSet \ra
s\Cat$.  Any simplicial set $K$ may be written as a colimit of
simplices via the formula
\[ K\iso \colim_{\Delta^n\ra K} \Delta^n, 
\]
and consequently one has
\begin{align}\label{dia:jC colim} \jC(K)\iso \colim_{\Delta^n\ra K} \jC(\Delta^n)
\end{align}
where the colimit takes place in $s\Cat$.
This formula is a bit unwieldy, however, in the sense that it
does not give much concrete information about the mapping
spaces in $\jC(K)$.  The
point of the next three sections is to obtain such concrete
information, via the use of necklaces.

\section{Necklaces}
\label{se:necklaces}

A necklace is a simplicial set obtained by stringing simplices
together in succession.  In this section we establish some basic facts
about them, as well as facts about the more general category of
ordered simplicial sets.  When $T$ is a necklace we are able to give a
complete description of the mapping spaces in $\jC(T)$ as nerves of
certain posets, generalizing what was said for $\jC(\Delta^n)$ in the
last section.  See Proposition~\ref{prop:mapping in necklaces}.  

\medskip

As briefly discussed in the introduction, a \dfn{necklace} is defined to be a simplicial set of the form
\[ \Delta^{n_0}\Wedge\Delta^{n_1}\Wedge \cdots \Wedge\Delta^{n_k}\]
where each $n_i\geq 0$ and where in each wedge the final vertex of
$\Delta^{n_i}$ has been glued to the initial vertex of
$\Delta^{n_{i+1}}$.  We say that the necklace is in \dfn{preferred
form} if either $k=0$ or each $n_i\geq 1$.

Let $T=\Delta^{n_0}\Wedge\Delta^{n_1}\Wedge \cdots \Wedge\Delta^{n_k}$
be in preferred form.  Each $\Delta^{n_i}$ is called a \dfn{bead} of
the necklace.  A \dfn{joint} of the necklace is either an initial or a
final vertex in some bead.  Thus, every necklace has at least one
vertex, one bead, and one joint; $\Delta^0$ is not a bead in any
necklace except in the necklace $\Delta^0$ itself.

Given a necklace $T$, write $V_T$ and $J_T$ for the sets of vertices
and joints of $T$.  Note that $V_T=T_0$ and $J_T\subseteq V_T$.  Both $V_T$ and
$J_T$ are totally ordered, by saying $a\leq b$ if there is a directed
path in $T$ from $a$ to $b$.  The initial and final vertices of $T$ are denoted $\alpha_T$ and $\omega_T$ (and we sometimes drop the subscript); note that $\alpha_T,\omega_T\in J_T$.

Every necklace $T$ comes with a particular map $\bdd{1}\ra T$ which
sends $0$ to the initial vertex of the necklace, and $1$ to the final
vertex.  If $S$ and $T$ are two necklaces, then by $S\Wedge T$ we mean
the necklace obtained in the evident way, by gluing the final vertex
of $S$ to the initial vertex of $T$.  Let $\Nec$ denote the full
subcategory of $\bpSet=(\bdd{1}\ovcat\sSet)$ whose objects are
necklaces $\bdd{1}\to T$.  We sometimes talk about $\Nec$ as though it
is a subcategory of $\sSet$.

A simplex is a necklace with one bead.  A \dfn{spine} is a necklace in
which every bead is a $\Delta^1$.  Every necklace $T$ has an
associated simplex and spine, which we now define.  Let $\Delta[T]$ be
the simplex whose vertex set is the same as the (ordered) vertex set
of $T$.  Likewise, let $\Spi[T]$ be the longest spine inside of $T$.
Note that there are inclusions $\Spi[T]\inc T\inc \Delta[T]$.  The
assignment $T\ra \Delta[T]$ is a functor, but $T\ra \Spi[T]$ is not
(for instance, the unique map of necklaces $\Delta^1\to\Delta^2$ does
not induce a map on spines).

\subsection{Ordered simplicial sets}
If $T\ra T'$ is a map of necklaces, then the image of $T$ is also a
necklace.  To prove this, as well as for several other reasons
scattered thoughout the paper, it turns out to be very convenient to
work in somewhat greater generality.  

If $X$ is a simplicial set, define a relation on its $0$-simplices by
saying that $x\poleq y$ if there exists a spine $T$ and a map $T\ra
X$ sending $\alpha_T\mapsto x$ and $\omega_T\mapsto y$.  In other words,
$x\poleq y$ if there is a directed path from $x$ to $y$ inside of
$X$.  Note that this relation is clearly reflexive and transitive, but
not necessarily antisymmetric: that is, if $x\poleq y$ and $y\poleq x$
it need not be true that $x=y$.  

\begin{defn}
\label{de:ordered}
A simplicial set $X$ is \dfn{ordered} if
\begin{enumerate}[(i)]
\item The relation $\poleq$ defined on $X_0$ is antisymmetric, and
\item An simplex $x\in X_n$ is determined by its
sequence of vertices $x(0)\poleq\cdots\poleq x(n)$; i.e. no two
distinct $n$-simplices have identical vertex sequences.
\end{enumerate} 

\end{defn}

Note the role of degenerate simplices in condition (ii).  For example,
notice that $\Delta^1/\bdd{1}$ is not an ordered simplicial set.

The following notion is also useful:

\begin{defn}
Let $A$ and $X$ be simplicial sets.  A map $A\ra X$ is called
a \dfn{simple inclusion} if it has the right lifting property with
respect to the canonical inclusions $\bd{1}\inc T$ for all necklaces
$T$.  (Note that such a map really is an inclusion, because it has the
lifting property for $\bd{1}\ra \Delta^0$).
\end{defn}

The notion of simple inclusion says that if there is a
``path'' (in the sense of a necklace) in $X$ that starts and ends in
$A$, then it must lie entirely in $A$.  As an example, four out of the five inclusions $\Delta^1\inj\Delta^1\cross\Delta^1$ are simple inclusions.  

\begin{lemma}
\label{le:simple}
A simple inclusion $A\inc X$ has the right lifting property with
respect to the maps $\bd{k}\inc \Delta^k$ for all $k\geq 1$.  
\end{lemma}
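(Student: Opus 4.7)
The key observation I would exploit is that $\Delta^k$ is itself a necklace (a single bead) whose endpoints $\alpha = 0$ and $\omega = k$ both lie in $\bd{k}$. Consequently, any lifting problem for $\bd{k}\inc \Delta^k$ against $A\inc X$ restricts naturally to a lifting problem for the canonical necklace inclusion $\bd{1}\inc\Delta^k$ against $A\inc X$, which is exactly the situation the simple inclusion hypothesis addresses.

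Concretely, given a commutative square with top edge $f\taking \bd{k}\to A$ and bottom edge $g\taking \Delta^k\to X$, my first step is to restrict $f$ to $\{0,k\}$, which I identify with $\bd{1}$ via the canonical map $\bd{1}\to\Delta^k$ picking out the initial and terminal vertices of the necklace $\Delta^k$. Commutativity of the original square immediately yields commutativity of
\[
\xymatrix{
\bd{1} \ar[r] \ar[d] & A \ar[d] \\
\Delta^k \ar[r]^g & X,
}
\]
so that the hypothesis supplies a lift $\tilde g\taking \Delta^k\to A$ with $(A\inc X)\circ \tilde g = g$.

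The one remaining step is to verify that $\tilde g$ agrees with $f$ on all of $\bd{k}$, and not merely on the two endpoints $\{0,k\}$. My plan is to invoke the monomorphism property of $A\inc X$, noted parenthetically in the definition of simple inclusion (via the lifting property against $T = \Delta^0$): both $\tilde g|_{\bd{k}}$ and $f$ are lifts of $g|_{\bd{k}}\taking \bd{k}\to X$ along $A\inc X$, and hence they must coincide. I do not anticipate any real obstacle; the one genuine conceptual move is recognizing $\Delta^k$ as a necklace in its own right, after which the rest of the argument is entirely formal.
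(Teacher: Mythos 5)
Your proof is correct and follows essentially the same route as the paper's: restrict the boundary map to the two endpoint vertices to get a lifting problem for $\bd{1}\inc\Delta^k$ (using that $\Delta^k$ is a one-bead necklace), apply the simple-inclusion hypothesis to obtain a lift, and then use that $A\inc X$ is a monomorphism to conclude the lift agrees with the original map on all of $\bd{k}$.
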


\begin{proof}
Suppose given a square
\[ \xymatrix{ \bd{k}\ar[r]\ar[d] & A \ar[d] \\
 \Delta^k \ar[r] & X.
}
\]
By restricting the map $\bd{k}\ra A$ to $\bd{1}\inc \bd{k}$ (given by
the initial and final vertices of $\bd{k}$), we get a corresponding
lifting square with $\bd{1}\inc \del{k}$.  Since $A\ra X$ is a simple
inclusion, this new square has a lift $l\colon \Delta^k \ra A$.  It is
not immediately clear that $l$ restricted to $\bd{k}$ equals our
original map, but the two maps are equal after composing with $A\ra X$;
since $A\ra X$ is a monomorphism, the two maps are themselves equal.    
\end{proof}

\begin{lemma}\label{lemma:facts on ordered}
Let $X$ and $Y$ denote ordered simplicial sets
and let $f\colon X\ra Y$ be a map.
\begin{enumerate}
\item The category of ordered simplicial sets is closed under taking
finite limits.
\item Every necklace is an
ordered simplicial set.
\item If $X'\subseteq X$ is a simplicial subset, then $X'$ is also
ordered.
\item The map $f$ is completely determined by the map $f_0\taking X_0\to
Y_0$ on vertices.
\item If $f_0$ is injective then so is $f$.
\item The image of an $n$-simplex $x\taking\Delta^n\to X$ is of the
form $\Delta^k\inj X$ for some $k\leq n$.
\item If $T$ is a necklace and $y\colon T\to X$ is a map, then its
image is a necklace.
\item Suppose that 
$X\from A\to Y$ is a diagram of ordered simplicial sets, and both $A\to X$
and $A\to Y$ are simple inclusions.   Then the pushout
$B=X\amalg_AY$ is an ordered simplicial set, and the inclusions $X\inc B$ and $Y\inc
B$ are both simple.

\end{enumerate}

\end{lemma}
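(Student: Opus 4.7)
The plan is to handle parts (1)--(5) as direct consequences of the definition, reduce (6) and (7) to (4)--(5) via the vertex-sequence property, and concentrate the real work on (8).

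For (1), finite limits in $\sSet$ are computed levelwise: if $x \poleq y \poleq x$ in $\lim X_i$, the same holds coordinatewise, so antisymmetry in each $X_i$ forces $x=y$, and an $n$-simplex of the limit is determined coordinatewise by its vertex sequence. Part (2) is clear because every simplex of a necklace lies inside a single bead $\Delta^{n_i}$, where conditions (i) and (ii) are evident, and $\poleq$ on $V_T$ coincides with the linear order along the necklace; (3) is inherited. For (4), if $f,g\taking X\to Y$ agree on $X_0$, then for every $x\in X_n$ the simplices $f(x)$ and $g(x)$ have equal vertex sequences, so (ii) in $Y$ forces $f(x)=g(x)$. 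For (5), $f(x_1)=f(x_2)$ yields equal vertex sequences in $Y_0$, which by injectivity of $f_0$ pull back to equal vertex sequences in $X_0$; then $x_1=x_2$ by (ii) in $X$.

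For (6), order the distinct values of $x(0),\ldots,x(n)$ as $v_0\prec\cdots\prec v_k$; this yields a monotone surjection $\phi\taking[n]\to[k]$ with $x(i)=v_{\phi(i)}$. Pick any section $\psi$ of $\phi$ and set $x'=x\circ\psi\taking\Delta^k\to X$; then $x'\circ\phi$ and $x$ have the same vertex sequences, so by (4) they agree, showing $x$ factors as $\Delta^n\twoheadrightarrow\Delta^k\to X$. By (5) the map $x'$ is a monomorphism, identifying the image with $\Delta^k$. For (7), apply (6) to each bead $\Delta^{n_i}$ of $T$ to obtain image-beads $\Delta^{k_i}\inc X$; the image of $T$ is their union. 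The main subtlety is that two adjacent image-beads $\Delta^{k_i}$ and $\Delta^{k_{i+1}}$ share only the image of their common joint: any other shared vertex $v$ would give $v\poleq j\poleq v$ for that joint $j$, forcing $v=j$ by antisymmetry combined with the strict order within each bead. A similar argument forces non-adjacent image-beads to share a vertex only when all intervening image-beads collapse, and after absorbing such $0$-dimensional beads the union is a necklace.

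Part (8) is the main obstacle. Since simple inclusions are monomorphisms (as noted in the parenthetical following their definition), $A_n\inc X_n$ and $A_n\inc Y_n$ are injections, hence $B_n = X_n\amalg_{A_n} Y_n$ as sets and both $X,Y\inc B$ are mono. For condition (ii) on $B$: two $n$-simplices $b_1,b_2$ with the same vertex sequence either both come from $X$, both from $Y$ (reducing to the hypothesis on that piece), or one from each; in the mixed case the first and last vertices of the $X$-representative lie in the image of $A_0$, and since $\Delta^n$ is a one-bead necklace the simple inclusion $A\inc X$ lifts that simplex to $A$, while the simple inclusion $A\inc Y$ lifts the other, after which (ii) in $A$ identifies them. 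For condition (i): a loop $b_1\poleq b_2\poleq b_1$ in $B$ is realized by a spine $S\to B$ decomposing into maximal sub-spines landing in $X$ and in $Y$; the transition vertices between consecutive segments lie in $X\cap Y\subseteq A$, so each segment is a necklace with both endpoints in $A$ and lifts to $A$ by simple inclusion; the whole loop then sits in $A$, and antisymmetry in $A$ forces $b_1 = b_2$. Finally, to show $X\inc B$ is a simple inclusion, given $T\to B$ with $\bdd{1}$-endpoints in $X$, partition the beads of $T$ by whether they land in $X$ or in $Y$; each maximal $Y$-segment is a sub-necklace whose two joint-endpoints lie in $X\cap Y\subseteq A$, so by the simple inclusion $A\inc Y$ it lifts to $A\subseteq X$, and reassembling yields the lift $T\to X$. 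The hardest part throughout is the combinatorial bookkeeping for this last assertion; the essential mechanism---using simple inclusion to push mixed necklace-paths into $A$---is the same throughout (i), (ii), and the final lifting argument.
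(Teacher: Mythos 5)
Your treatment of (1)--(7) and most of (8) is sound and runs parallel to the paper's. For condition (ii) of (8) your mixed-case argument is fine and in fact a bit slicker than the paper's: since all vertices of the $X$-representative $p$ are also vertices of the $Y$-representative, they lie in $A_0$, and viewing $\Delta^k$ as a one-bead necklace with $\alpha,\omega\in A$ the simple inclusion $A\inc X$ lifts $p$ into $A$ directly, after which both simplices live in $Y$ (or $A$) and condition (ii) there finishes; the paper instead runs an induction on $k$ together with its Lemma~\ref{le:simple}. Your final lifting argument for ``$X\inc B$ is simple'' (lift each maximal run of beads landing in $Y$ into $A$ using the simple inclusion $A\inc Y$, then reassemble) is exactly the paper's mechanism.

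The genuine problem is the antisymmetry step as you wrote it. Given a loop $b_1\poleq b_2\poleq b_1$ realized by a spine $S\to B$, you claim that every maximal segment has \emph{both} endpoints in $A$, hence the whole loop lifts into $A$, and you then invoke antisymmetry of $A$. This fails: the first segment begins at $b_1$ and the last segment ends at $b_1$, and $b_1$ (likewise $b_2$) need not lie in $A$ at all; if $b_1\in X\setminus A$ the loop cannot possibly sit inside $A$, so ``antisymmetry in $A$'' is not available. The repair is to reorder your proof of (8) exactly as the paper does: first prove that $X\inc B$ and $Y\inc B$ are simple inclusions (your last paragraph, which only uses the simpleness of $A\inc X$ and $A\inc Y$), and then, since $b_1$ lies in $X$ or in $Y$, conclude that the entire loop $S\to B_{b_1,b_1}$ factors through that piece; antisymmetry of $X$ (or $Y$), not of $A$, then gives $b_1=b_2$. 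With that single reordering and correction your argument is complete.
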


\begin{proof}
For (1), the terminal object is a point with its unique
ordering.  Given a diagram of the form
$$X\too Z\fromm Y,$$ 
let $A=X\cross_Z Y$.  It is clear  that if $(x,y)\poleq_A(x',y')$ then both $x\poleq_Xx'$
and $y\poleq_Yy'$ hold, and so antisymmetry of $\poleq_A$ follows from
that of $\poleq_X$ and $\poleq_Y$.  Condition (ii) from
Definition~\ref{de:ordered} is easy to check.

Parts (2)--(5) are easy, and left to the reader.

For (6), the sequence $x(0),\ldots,x(n)\in X_0$ may have
duplicates; let $d\taking \Delta^k\to\Delta^n$ denote any face such
that $x\circ d$ contains all vertices $x(j)$ and has no duplicates.
Note that $x\circ d$ is an injection by (5).  A certain degeneracy of $x\circ
d$ has the same vertex sequence as $x$.  Since $X$ is ordered, $x$ is this
degeneracy of $x\circ d$.  Hence, $x\circ d\taking\Delta^k\inj X$ is
the image of $x$.

Claim (7) follows from (6).  

For claim (8) we first show that the maps $X\inc B$ and $Y\inc B$ are
simple inclusions.  To see this, suppose that $u,v\in X$ are vertices,
$T$ is a necklace, and $f\taking T\ra B_{u,v}$ is a map; we want to
show that $f$ factors through $X$.  Note that any simplex $\Delta^k\to
B$ either factors through $X$ or through $Y$.  Suppose that $f$ does
not factor through $X$.  From the set of
beads of $T$ which do not factor through $X$, take 
any maximal subset $T'$ in which all the beads are
adjacent.  Then we have a necklace $T'\ss T$ such that $f(T')\ss
Y$. 
If there exists a bead in $T$ prior to $\alpha_{T'}$, then it must map
into $X$ since $T'$ was maximal; so 
$f(\alpha_{T'})$ lies in  $X\cap Y=A$.  Likewise, if
there is no bead prior to $\alpha_{T'}$ then $f(\alpha_{T'})=u$ and so
again $f(\alpha_{T'})$ lies in $X\cap Y=A$.  Similar remarks apply to
show that $f(\omega_{T'})$ lies in $A$.  At this point the fact that
$A\inc Y$ is a simple inclusion implies that 
$f(T')\ss A\ss X$, which is a contradiction.  So in fact $f$ factored
through $X$.

We have shown that $X\inc B$ (and dually $Y\inc B$) is a simple
inclusion.  Now we show that $B$ is ordered, so suppose $u,v\in B$ are
such that $u\poleq v$ and $v\poleq u$.  There there are spines $T$ and
$U$ and maps $T\ra B_{u,v}$, $U\ra B_{v,u}$.  Consider the composite
spine $T\Wedge U \ra B_{u,u}$.  If $u\in X$, then since $X\inc B$ is a
simple inclusion it follows that the image of $T\Wedge U$ maps
entirely into $X$; so $u\poleq_X v$ and $v\poleq_X u$, which means
$u=v$ because $X$ is ordered.  The same argument works if $u\in Y$, so
this verifies antisymmetry of $\poleq_B$.

To verify condition (ii) of Definition~\ref{de:ordered}, suppose
$p,q\taking\Delta^k\to B$ are $k$-simplices with the same sequence of
vertices; we wish to show $p=q$.
We know that $p$ factors through $X$ or $Y$, and so does $q$; if both
factor through $Y$, then the fact that $Y$ is ordered implies that
$p=q$ (similarly for $X$).  So we may assume $p$ factors through $X$
and $q$ factors through $Y$.  By induction on $k$, the restrictions
$p|_{\bdd{k}}=q|_{\bdd{k}}$ are equal, hence factor through $A$.  By
Lemma~\ref{le:simple} applied to $A\inc X$, the map $p$ factors
through $A$.  Therefore it also factors through $Y$, and now we are
done because $q$ also factors through $Y$ and $Y$ is ordered.
\end{proof}

\subsection{Categorification of necklaces}

Let $T$ be a necklace.  Our next goal is to give a complete
description of the simplicial category $\jC(T)$.  The object set of
this category is precisely $T_0$. 

For vertices $a,b\in T_0$, let $V_T(a,b)$ denote the set of vertices
in $T$ between $a$ and $b$, inclusive (with respect to the relation
$\poleq$).  Let $J_T(a,b)$ denote the union of $\{a,b\}$ with the set
of joints between $a$ and $b$.  There is a unique subnecklace of $T$ with joints
$J_T(a,b)$ and vertices $V_T(a,b)$; let
$\wt{B}_0,\wt{B}_1,\ldots\wt{B}_k$ denote its beads.  There are
canonical inclusions of each $\wt{B}_i$ to $T$.  Hence, there is a
natural map
\[
\jC(\wt{B}_{k})(j_k,b)
\times \jC(\wt{B}_{k-1})(j_{k-1},j_k) \times \cdots \times 
\jC(\wt{B}_1)(j_1,j_2) \times \jC(\wt{B}_0)(a,j_1)
\ra \jC(T)(a,b)
\]
obtained by first including the $\wt{B}_i$'s into $T$ and then using
the composition in $\jC(T)$ (where $j_i$ and $j_{i+1}$ are the joints
of $B_i$).  We will see that this map is an isomorphism.  Note that
each of the sets $\jC(\wt{B}_i)(\blank,\blank)$ has an easy
description, as in Lemma \ref{le:C(Delta^n)}; from this one may
extrapolate a corresponding description for $\jC(T)(\blank,\blank)$,
to be explained next.

Let $C_T(a,b)$ denote the poset 
whose elements are subsets of $V_T(a,b)$
which contain $J_T(a,b)$, ordered by inclusion.  
There is a pairing of categories
\[ C_T(b,c)\times C_T(a,b) \ra C_T(a,c) \]
given by union of subsets.

Applying the nerve functor, we obtain a simplicial category $NC_T$
with object set $T_0$.  For $a,b\in T_0$, an $n$-simplex in $NC_T(a,b)$ can be seen as a flag of sets $\vect{T}=T^0\subseteq T^1\subseteq\cdots\subseteq T^n$, where $J_T\subseteq T^0$ and $T^n\subseteq V_T$.

\begin{prop}\label{prop:mapping in necklaces}
Let $T$ be a necklace.  There is a natural isomorphism of simplicial
categories
between $\jC(T)$ and $NC_T$.
\end{prop}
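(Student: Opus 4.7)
I would prove this by induction on the number $k$ of beads of $T$. The base case $k=1$ reduces to Lemma~\ref{le:C(Delta^n)}: when $T = \Delta^n$ one has $V_T(i,j) = \{i, i+1, \ldots, j\}$ and $J_T(i,j) = \{i, j\}$, so $C_T(i,j) = P_{i,j}$ as posets, and hence $NC_T = NP \iso \jC(\Delta^n)$.

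For the inductive step, decompose $T = T' \Wedge B$ where $T'$ consists of the first $k-1$ beads, $B$ is the last bead, and $v := \omega_{T'} = \alpha_B$ is the junction vertex. Then $T = T' \amalg_{\Delta^0} B$ is a pushout in $\sSet$, and since $\jC$ is a left adjoint (formula~\eqref{dia:jC colim}) it preserves this pushout. Combined with the inductive hypothesis, this yields
$$\jC(T) \iso \jC(T') \amalg_{\jC(\Delta^0)} \jC(B) \iso NC_{T'} \amalg_{*} NC_B \qquad \text{in } \sCat,$$
where $* = \jC(\Delta^0)$ is the trivial one-object simplicial category. It therefore suffices to show that $NC_T$ satisfies the universal property of this pushout. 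The inclusions $V_{T'} \subseteq V_T \supseteq V_B$ induce simplicial functors $NC_{T'} \to NC_T$ and $NC_B \to NC_T$ agreeing at $v$; given any compatible pair of functors $F_1 \colon NC_{T'} \to \cD$ and $F_2 \colon NC_B \to \cD$, the required extension $F \colon NC_T \to \cD$ is forced to equal $F_1$ or $F_2$ on mapping spaces whose endpoints lie entirely in $T'$ or entirely in $B$ (since then $V_T(a,b)$ and $J_T(a,b)$ agree with those for $T'$ or $B$). For $a \in V_{T'} \setminus \{v\}$ and $b \in V_B \setminus \{v\}$, the key observation is that $V_T(a,b) = V_{T'}(a, v) \cup V_B(v, b)$ and $J_T(a,b) = J_{T'}(a, v) \cup J_B(v, b)$, with both unions meeting in $\{v\}$. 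The assignment $S \mapsto (S \cap V_{T'}(a,v),\, S \cap V_B(v, b))$ then gives a poset isomorphism $C_T(a,b) \iso C_{T'}(a, v) \times C_B(v, b)$, so (using that $N$ preserves products) $F$ on this mapping space is forced to be
$$NC_T(a,b) \iso NC_{T'}(a, v) \times NC_B(v, b) \To{F_1 \times F_2} \cD(a, v) \times \cD(v, b) \To{\circ} \cD(a, b).$$
For $a \in V_B \setminus \{v\}$ and $b \in V_{T'} \setminus \{v\}$ one has $V_T(a,b) = \emptyset$, so nothing further needs to be defined.

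\textbf{Main obstacle.} The main thing to verify is that the $F$ just constructed is genuinely a simplicial functor, i.e., that it respects composition in $NC_T$. The only non-trivial case is a composable pair $(a \poleq b,\, b \poleq c)$ that straddles the junction $v$; here the required commutativity follows from associativity of composition in $\cD$ combined with the compatibility of set-union (which models composition in each of $NC_T$, $NC_{T'}$, $NC_B$) with the product decomposition above, which in turn reduces to a direct check using $(S \cup S') \cap V_{T'} = (S \cap V_{T'}) \cup (S' \cap V_{T'})$ and the analogous identity for $V_B$. Uniqueness of $F$ is automatic, since each straddling mapping space $NC_T(a,b)$ is, by the decomposition, reached via composition from the images of $NC_{T'}$ and $NC_B$.
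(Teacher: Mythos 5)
Your proposal is correct, and its skeleton---decompose $T$ into beads, use that $\jC$ preserves colimits, and feed in Lemma~\ref{le:C(Delta^n)}---is the same as the paper's; the difference lies in how the comparison with $NC_T$ is verified. The paper writes $T=B_1\Wedge\cdots\Wedge B_k$ all at once, assembles a functor $f\colon\jC(T)\to NC_T$ from the isomorphisms $\jC(B_i)\iso NC_{B_i}$, and proves it fully faithful by exhibiting an explicit section $g$ on each hom-space (splitting a flag along the joints $J_T(a,b)$ into simplices of the beads and composing their images in $\jC(T)$), then arguing that $g$ is surjective because every morphism of the colimit $\jC(T)=\jC(B_1)\amalg_{*}\cdots\amalg_{*}\jC(B_k)$ is a composite of morphisms from the pieces. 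You instead induct on the number of beads and show that $NC_T$ itself satisfies the universal property of the pushout $NC_{T'}\amalg_{*}NC_B$, the crux being the poset isomorphism $C_T(a,b)\iso C_{T'}(a,v)\times C_B(v,b)$ for vertices straddling the junction $v$. The essential input is the same decomposition of hom-objects along joints (compare the definition of $g$ and Corollary~\ref{cor:mapping space is cube}), but your route replaces the paper's informal appeal to how morphisms in a colimit of categories are generated by the forced-definition/uniqueness step inside the universal property, which is more self-contained; the price is the functoriality check you flag (composition across $v$, which indeed reduces to associativity in $\cD$ and compatibility of union with the intersection decomposition) and an induction, whereas the paper obtains the whole isomorphism in one step with an explicitly described inverse.
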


\begin{proof}
Write $T=B_1\Wedge B_2\Wedge \cdots \Wedge B_k$, where the $B_i$'s are
the beads of $T$.
Then 
\[ \jC(T)=\jC(B_1)\amalg_{\jC(*)} \jC(B_2)\amalg_{\jC(*)}\cdots
\amalg_{\jC(*)} \jC(B_k)
\]
since $\jC$ preserves colimits.  Note that $\jC(*)=\jC(\Delta^0)=*$, 
the category with one object and a single morphism (the identity).  

Note that we have isomorphisms $\jC(B_i)\iso NC_{B_i}$ by Lemma
\ref{le:C(Delta^n)}.  We therefore get maps of categories
$\jC(B_i)\ra NC_{B_i} \ra NC_T$, and it is readily checked these
extend to a map $f\colon \jC(T)\ra NC_T$.  To see that this functor is
an isomorphism, it suffices to show that it is fully faithful (as it
is clearly a bijection on objects).

For any $a,b\in T_0$ we will construct an inverse to the map
$f\colon \jC(T)(a,b) \ra NC_T(a,b)$, when $b>a$ (the case $b\leq a$ being obvious).  Let $B_r$ and $B_s$ be the beads containing $a$ and $b$, respectively (if $a$ (resp. $b$) is a joint, let $B_r$ (resp. $B_s$) be the latter (resp. former) of the two beads which contain it).  Let
$j_r,j_{r+1}\ldots,j_{s+1}$ denote the ordered elements of $J_T(a,b)$, indexed so that
$j_i$ and $j_{i+1}$ lie in the bead $B_i$; note that $j_r=a$ and $j_{s+1}=b$.  

Any simplex  $x\in NC_T(a,b)_n$ can be uniquely
written as the composite of $n$-simplices $x_s\circ\cdots\circ
x_{r}$, where $x_i\in NC_T(j_{i},j_{i+1})_n$.  Now $j_{i}$ and
$j_{i+1}$ are vertices within the same bead $B_{i}$ of $T$, therefore
$x_i$ may be regarded as an $n$-simplex in
$\jC(B_i)(j_i,j_{i+1})$.
We then get associated $n$-simplices in $\jC(T)(j_i,j_{i+1})$, and taking
their composite gives an $n$-simplex $\tilde{x}\in\jC(T)(a,b)$.  We
define a map $g\colon NC_T(a,b)\ra \jC(T)(a,b)$ by sending $x$ to
$\tilde{x}$.
One readily checks that this is well-defined
and compatible with the simplicial operators, and it is also clear
that $f\circ g=\id$.  

To see that $f$ is an isomorphism it suffices to now show that $g$ is
surjective.  But upon pondering colimits of categories, it is clear
that every map in $\jC(T)(a,b)$ can be written as a composite of maps
from the $\jC(B_i)$'s.  It follows at once that $g$ is surjective.
\end{proof}

\begin{cor}
\label{cor:mapping space is cube}
Let $T=B_0\Wedge B_1\Wedge \cdots \Wedge B_k$ be a necklace.  Let
$a,b\in T_0$ be such that $a<b$.  Let $j_r,j_{r+1},\ldots,j_{s+1}$ be
the elements of $J_T(a,b)$ (in order), and let $B_i$ denote the bead
containing $j_i$ and $j_{i+1}$, for $r\leq i\leq s$.  Then the map
\[ 
\jC(B_s)(j_s,j_{s+1})\times \cdots \times 
\jC(B_r)(j_r,j_{r+1}) 
\ra \jC(T)(a,b)
\]
is an isomorphism.  Therefore $\jC(T)(a,b)\iso (\Delta^1)^N$ where
$N=|V_T(a,b)-J_T(a,b)|$.  In particular, $\jC(T)(a,b)$ is contractible if $a\leq b$ and empty otherwise.
\end{cor}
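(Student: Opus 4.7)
The plan is to reduce the statement to a purely combinatorial fact about the posets $C_T(a,b)$ introduced above, where everything decomposes cleanly along the bead structure.  The previous proposition identifies $\jC(T) \iso NC_T$, so the first map in question is naturally identified with the map of nerves induced by the poset map
\[
C_{B_s}(j_s,j_{s+1}) \times \cdots \times C_{B_r}(j_r,j_{r+1}) \lra C_T(a,b)
\]
obtained by taking unions.  Since the nerve functor preserves products, it suffices to show that this poset map is an isomorphism.

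For that step, the key observation is that the vertex sets $V_{B_i}(j_i,j_{i+1})$ (for $r \leq i \leq s$) cover $V_T(a,b)$, overlap only at the joints $j_i$, and every joint in $J_T(a,b)$ arises this way.  Thus a subset $S \subseteq V_T(a,b)$ containing $J_T(a,b)$ is completely and freely determined by the tuple of its intersections $S \cap V_{B_i}(j_i,j_{i+1})$, each of which is an arbitrary subset of $V_{B_i}(j_i,j_{i+1})$ containing $\{j_i,j_{i+1}\}$.  This gives a bijection of underlying sets, and since the orders on both sides are componentwise inclusion, it is an isomorphism of posets.  Taking nerves then yields the first asserted isomorphism.

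For the second claim, each bead $B_i$ is a simplex, so by Lemma~\ref{le:C(Delta^n)} we have $\jC(B_i)(j_i,j_{i+1}) \iso NP_{j_i,j_{i+1}} \iso (\Delta^1)^{M_i}$, where $M_i = |V_{B_i}(j_i,j_{i+1})| - 2$ is the number of vertices strictly between $j_i$ and $j_{i+1}$ in $B_i$.  Because the $V_{B_i}(j_i,j_{i+1})$ overlap only at joints and together cover $V_T(a,b)$, summing yields
\[
\sum_{i=r}^{s} M_i \;=\; |V_T(a,b)| - |J_T(a,b)| \;=\; N,
\]
so the product in the first assertion is $(\Delta^1)^N$.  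The final contractibility claim follows immediately, since cubes are contractible and the empty case (when $a \not\leq b$) follows from the fact that $C_T(a,b)$ is empty in that situation.

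The only step that needs any real care is the combinatorial decomposition of $C_T(a,b)$ as a product; once one sets up the bookkeeping of which vertices belong to which sub-bead and notes that the joints are the only shared vertices, the rest is a formality.
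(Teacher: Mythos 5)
Your proof is correct and follows essentially the same route as the paper: the paper's own proof is just "follows at once from the previous lemma," i.e.\ the identification $\jC(T)\iso NC_T$ of Proposition~\ref{prop:mapping in necklaces} together with the evident bead-by-bead decomposition of the poset $C_T(a,b)$, which is exactly the combinatorial argument you spell out. Your version simply makes explicit the details (the product decomposition of $C_T(a,b)$, the vertex count, and the empty case) that the paper leaves implicit.
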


\begin{proof}
Follows at once from the previous lemma.
\end{proof}

\begin{remark}\label{rem:heuristic}
Given a necklace $T$, there is a heuristic way to understand faces
(both codimension one and higher) in
the cubes $\jC(T)(a,b)$ in terms of ``paths" from $a$ to $b$ in $T$.
To choose a face in $\jC(T)(a,b)$, one chooses three subsets
$Y,N,M\subset V_T(a,b)$ which cover the set $V_T(a,b)$ and are mutually
disjoint.  The set $Y$ is the set of vertices which we require our
path to go through -- it must contain $J_T(a,b)$; the set $N$ is the
set of vertices which we require our path to not go through; and the
set $M$ is the set of vertices for which we leave the question open.
Such choices determine a unique face in $\jC(T)(a,b)$.  The dimension
of this face is precisely the number of vertices in $M$.
\end{remark}


\section{The categorification functor}\label{sec:jC}

By this point, we fully understand $\jC(\Delta^n)$ as a simplicial category.  Recall that $\jC\colon \sSet \ra s\Cat$ is defined for $S\in\sSet$ by the formula
\[ \jC(S)=\colim_{\Delta^n\ra S} \jC(\Delta^n).\]
The trouble with this formula is that given a diagram $X\colon I\to
s\Cat$ of simplicial categories, it is generally quite difficult to
understand the mapping spaces in the colimit.  In our case, however,
something special happens because the simplicial categories
$\jC(\Delta^n)$ are ``directed" in a certain sense.  It turns out by
making use of necklaces one can write down a precise description of
the mapping spaces for $\jC(S)$; this is the goal of the present
section.

\medskip

Fix a simplicial set $S$ and elements $a,b\in S_0$.  
For any necklace $T$ and map $T\ra S_{a,b}$, there is an induced map
$\jC(T)(\alpha,\omega) \ra \jC(S)(a,b)$.  Let $(\Nec\ovcat S)_{a,b}$
denote the category whose objects are pairs $[T,T\ra S_{a,b}]$ and
whose morphisms are maps of necklaces $T\ra T'$ giving commutative triangles over
$S$.  Then we obtain a  map
\begin{myequation}
\label{eq:colim-map}
 \colim_{T\ra S \in (\Nec\ovcat S)_{a,b}} \Bigl [
\jC(T)(\alpha,\omega)\Bigr ] \lra \jC(S)(a,b).
\end{myequation}
Let us write $E_S(a,b)$ for the domain of this map.  Note that there
are composition maps
\begin{myequation}
\label{eq:composite}
 E_S(b,c) \times E_S(a,b) \lra E_S(a,c) 
\end{myequation}
induced in the following way.  Given $T\ra S_{a,b}$ and $U\ra S_{b,c}$
where $T$ and $U$ are necklaces, one obtains $T\Wedge U\ra S_{a,c}$ in
the evident manner.  The composite
\[\xymatrix{ 
\jC(U)(\alpha_U,\omega_U)\times \jC(T)(\alpha_T,\omega_T) \ar[r] &
\jC(T\Wedge U)(\omega_T,\omega_U)\times \jC(T\Wedge
U)(\alpha_T,\omega_T) \ar[d]^\mu \\
& \jC(T\Wedge U)(\alpha_T,\omega_U)
}
\]
induces the pairing of (\ref{eq:composite}).
One readily checks that $E_S$ is a simplicial category with object set
$S_0$, and (\ref{eq:colim-map}) yields a map of simplicial categories
$E_S \ra \jC(S)$.  Moreover, the construction $E_S$ is clearly functorial in $S$.

Here is our first result:

\begin{prop}
\label{pr:jC-main}
For every simplicial set $S$, the map $E_S\ra \jC(S)$ is an
isomorphism of simplicial categories.
\end{prop}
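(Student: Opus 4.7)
The plan is to show that $E \cong \jC$ as functors $\sSet \to s\Cat$, by verifying that the natural transformation $E \to \jC$ is bijective on objects, is an isomorphism on simplices, and that both $E$ and $\jC$ preserve colimits in a compatible way. Both $E_S$ and $\jC(S)$ have object set $S_0$ (the former by construction, the latter from the colimit formula together with $\Ob \jC(\Delta^n)=\{0,\ldots,n\}$), so the map is bijective on objects.

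First I would verify that $E_{\Delta^n} \to \jC(\Delta^n)$ is an isomorphism. For $i \leq j$, I claim the canonical subsimplex $\iota\taking \Delta^{j-i} \inj \Delta^n$ on vertices $i, i+1, \ldots, j$ is a terminal object of $(\Nec \ovcat \Delta^n)_{i,j}$: by Lemma~\ref{lemma:facts on ordered}(7) any necklace map $T \to \Delta^n_{i,j}$ has image a sub-necklace of $\Delta^n$ with vertices inside $\{i, \ldots, j\}$, which factors uniquely through $\iota$ since $\Delta^{j-i}$ is ordered. Hence the colimit defining $E_{\Delta^n}(i,j)$ collapses to $\jC(\Delta^{j-i})(0, j-i) \cong (\Delta^1)^{j-i-1}$, matching $\jC(\Delta^n)(i,j)$ by Corollary~\ref{cor:mapping space is cube}. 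For $i > j$ the indexing category is empty (a necklace map cannot reverse the order on vertices) and both sides are empty. The case $i=j$ is similar, with $\Delta^0\to\{i\}$ terminal.

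Next I would argue $E$ preserves colimits of simplicial sets. For $S = \colim_k S_k$, the object sets match ($S_0 = \colim_k (S_k)_0$); for mapping spaces, the key input is that necklaces are compact (only finitely many nondegenerate simplices), so any map $T \to S$ from a necklace factors through some $S_k$. This allows rewriting $(\Nec \ovcat S)_{a,b}$ as a suitable colimit assembled from the $(\Nec \ovcat S_k)_{a_k,b_k}$, and exchanging colimits yields $E_S \cong \colim_k E_{S_k}$ in $s\Cat$. Combined with the fact that $\jC$ preserves colimits (as the left adjoint to $N$) and that $S = \colim_{\Delta^n \to S} \Delta^n$, the agreement $E_{\Delta^n} \cong \jC(\Delta^n)$ from the previous step propagates to $E_S \cong \jC(S)$ for every simplicial set $S$.

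The main obstacle I expect is the colimit-preservation step. Colimits in $s\Cat$ are notoriously subtle because both object sets and mapping spaces must be assembled simultaneously (exactly the reason the formula for $\jC(S)$ is called ``unwieldy'' in the excerpt); extracting the mapping-space colimit from a general colimit in $s\Cat$ requires careful bookkeeping. An alternative route that sidesteps general colimit manipulations would be to directly describe the $n$-simplices of $\jC(S)(a,b)$ as equivalence classes of ``chains'' of morphisms drawn from the various $\jC(\Delta^m)$'s --- which is precisely the data of a necklace $T \to S_{a,b}$ together with an $n$-simplex in $\jC(T)(\alpha,\omega)$, via Proposition~\ref{prop:mapping in necklaces} --- and match these bijectively with $n$-simplices of $E_S(a,b)$.
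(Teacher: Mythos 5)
Your first two steps are sound and essentially reproduce the first paragraph of the paper's proof: the identification of a terminal object in $(\Nec\ovcat \Delta^n)_{i,j}$ (the paper does this for arbitrary necklaces, not just simplices) does show that $E_{\Delta^n}\to\jC(\Delta^n)$ is an isomorphism, provided you also note that the comparison is realized by the canonical map, i.e.\ that the face inclusion $\Delta^{j-i}\inc\Delta^n$ induces the poset isomorphism $P_{0,j-i}\iso P_{i,j}$, not merely an abstract isomorphism of cubes. The genuine gap is the globalization step. Your plan rests on the claim that $E$ preserves colimits, justified by compactness of necklaces ("any map $T\to S$ factors through some $S_k$"). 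That argument only applies to filtered colimits, but the colimit you actually need, $S\iso\colim_{\Delta^n\to S}\Delta^n$, is indexed by the category of simplices of $S$ and is not filtered; indeed a necklace $T\to S$ typically does \emph{not} factor through any single simplex of $S$ (already $T=\Delta^1\Wedge\Delta^1$ mapping identically to itself fails this). Moreover, even with a factorization in hand, identifying the mapping spaces of a colimit taken in $\sCat$ is exactly the hard point: such colimits freely generate new composites, so mapping spaces are built from words of simplices rather than from colimits of mapping spaces, and this bookkeeping --- which you acknowledge as "the main obstacle" --- is precisely what is left undone. Your proposed alternative ("directly describe the $n$-simplices of $\jC(S)(a,b)$ as equivalence classes of chains \dots which is precisely the data of a necklace with a flag") is not an argument but a restatement of the proposition to be proved, since the content of the statement is exactly that the mapping spaces of $\colim_{\Delta^n\to S}\jC(\Delta^n)$ admit that description.

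For contrast, the paper never proves that $E$ preserves colimits. It proves the necklace case first (terminal object), and then runs a diagram chase against the defining colimit of $\jC(S)$: the composite $\bigl(\colim_{\Delta^k\to S}E_{\Delta^k}\bigr)(a,b)\to E_S(a,b)\to\jC(S)(a,b)$ is a bijection because the outer terms are the same colimit computed in $\sCat$, which gives injectivity of the first map and surjectivity of the second for free. For the remaining direction, any $n$-simplex of $E_S(a,b)$ is by construction represented by some $[T, f\colon T\to S_{a,b}, \tilde x\in\jC(T)(\alpha,\omega)_n]$, and naturality of $E$ in $S$ lets one route this element through $E_T(\alpha,\omega)$, where the already-established necklace case shows it lifts to $\colim_{\Delta^k\to T}E_{\Delta^k}(\alpha,\omega)$ and hence lies in the image of $t$. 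If you want to salvage your outline, you should replace the colimit-preservation claim with an argument of this type (or else give an honest computation, via free composites and Proposition~\ref{prop:mapping in necklaces}, of the mapping spaces of the $\sCat$-colimit, which amounts to the same work).
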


\begin{proof}
First note that if $S$ is itself a necklace then the identity map
$S\ra S$ is a terminal object in $(\Nec\ovcat S)_{a,b}$.  It follows
at once that $E_S(a,b)\ra \jC(S)(a,b)$ is an isomorphism for all $a$
and $b$.

Now let $S$ be an arbitrary simplicial set, and 
choose vertices $a,b\in S_0$.  We will show
that $E_S(a,b)\to\jC(S)(a,b)$ is a bijection.
Consider the commutative diagram of simplicial sets
$$\xymatrix{\Bigl( \colim_{\Delta^k\to
S}E_{\Delta^k}\Bigr) (a,b)\ar[r]^-t\ar[d]_{\iso}&E_S(a,b)\ar[d]\\
\Bigl(\colim_{\Delta^k\to
S}\jC(\Delta^k)\Bigr)(a,b)\ar@{=}[r]&\jC(S)(a,b).}$$ The bottom equality is the definition of $\jC$.  The left-hand map is an isomorphism by our remarks in the first
paragraph.  It follows that the top map $t$ is injective.  To complete the proof it
therefore suffices to show that $t$ is surjective.

Choose an $n$-simplex $x\in E_S(a,b)_n$; it is represented by a
necklace $T$, a map $f\taking T\to S_{a,b}$, and an element
$\tilde{x}\in \jC(T)(\alpha,\omega)$.
We have a commutative diagram
\[\xymatrix{
\left(
\colim_{\Delta^k\to T}\jC(\Delta^k)\right)(\alpha,\omega)\ar[r] &
\jC(T)(\alpha,\omega) \\
\left(\colim_{\Delta^k\to T}E_{\Delta^k}\right)(\alpha,\omega)\ar[r]\ar[u]
\ar[d]_{f}&E_T(\alpha,\omega)\ar[u]\ar[d]^{E_f}\\
\left(\colim_{\Delta^k\to
S}E_{\Delta^k}\right)(a,b)\ar[r]^-t&E_S(a,b).} 
\]
The $n$-simplex in $E_T(\alpha,\omega)$ represented by
$[T,\id_T\taking T\to T;\tilde{x}]$ is sent to $x$ under $E_f$.  It
suffices to show that the middle horizontal map is surjective, for
then $x$ will be in the image of $t$.  But the top map is
an isomorphism, and the vertical arrows in the top row are
isomorphisms by the remarks from the first paragraph.  Thus, we are done.
\end{proof}

\begin{cor}
\label{co:description1}
For any simplicial set $S$ and elements $a,b\in S_0$, the simplicial
set $\jC(S)(a,b)$ admits the following description. 
An $n$-simplex in $\jC(S)(a,b)$
consists of an equivalence class of triples $[T,T\ra S,\vect{T}]$, where
\begin{itemize}
\item$T$ is a necklace; 
\item $T\ra S$ is a map of
simplicial sets which sends $\alpha_T$ to $a$ and $\omega_T$ to $b$;
and 
\item $\vect{T}$ is a flag of sets $T^0\subseteq
T^1\subseteq\cdots\subseteq T^n$ such that $T^0$ contains the joints
of $T$ and $T^n$ is contained in the set of vertices of $T$.
\end{itemize}
The equivalence
relation is generated by considering $(T\ra S; \vect{T})$ and $(U\ra
S;\vect{U})$ to be equivalent if there exists a map of necklaces $f\colon
T\ra U$ over $S$ with $\vect{U}=f_*(\vect{T})$.

The $i$th face (resp. degeneracy) map omits (resp. repeats) the set
$T^i$ in the flag.
That is, if
$x=(T\ra S;T^0\subseteq\cdots\subseteq T^n)$ represents an $n$-simplex of
$\jC(S)(a,b)$ and $0\leq i\leq n$, then
$$s_i(x)=(T\ra S; T^0\subseteq\cdots\subseteq
T^i\subseteq T^i\subseteq\cdots\subseteq T^n)$$
and
\[
d_i(x)=(T\ra S; T^0\subseteq\cdots\subseteq
T^{i-1}\subseteq T^{i+1}\subseteq\cdots\subseteq T^n).
\]
\end{cor}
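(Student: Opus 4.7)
The plan is to combine Proposition~\ref{pr:jC-main} with Proposition~\ref{prop:mapping in necklaces} and read off the resulting colimit. First I would invoke Proposition~\ref{pr:jC-main} to identify $\jC(S)(a,b)$ with
\[
E_S(a,b) = \colim_{[T\ra S_{a,b}]\in(\Nec\ovcat S)_{a,b}} \jC(T)(\alpha_T,\omega_T).
\]
Since colimits in $\sSet$ are computed levelwise in $\Set$, an $n$-simplex of $\jC(S)(a,b)$ is represented by a pair $\bigl([T\ra S_{a,b}],\, x\bigr)$ with $x\in\jC(T)(\alpha_T,\omega_T)_n$, modulo the equivalence relation generated by the action of morphisms of the indexing category.

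Next I would invoke Proposition~\ref{prop:mapping in necklaces} to replace each $\jC(T)(\alpha_T,\omega_T)$ by $NC_T(\alpha_T,\omega_T)$. Since $C_T(\alpha,\omega)$ is a poset, an $n$-simplex of its nerve is precisely a flag $\vect{T}=(T^0\ss\cdots\ss T^n)$ of subsets of $V_T$ with $J_T\ss T^0$. This identifies representatives with the desired triples $(T,T\ra S,\vect{T})$. The face and degeneracy formulas follow immediately from the definition of the nerve of a poset, where $d_i$ and $s_i$ act on a flag by omitting or repeating the $i$-th entry. The equivalence relation coming from the colimit is generated by morphisms $f\taking T\ra U$ in $(\Nec\ovcat S)_{a,b}$, each of which induces a simplicial map $f_*\taking NC_T(\alpha,\omega)\ra NC_U(\alpha,\omega)$ under which $[T\ra S;\vect{T}]$ identifies with $[U\ra S; f_*(\vect{T})]$; this matches the stated equivalence.

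The one point requiring care is confirming that $f_*$ acts sensibly on flags. A morphism of necklaces over $S$ need not take joints to joints or beads to beads---for instance, the map $\Delta^2\ra\Delta^1\Wedge\Delta^1$ sending the middle vertex to the central joint. Unpacking the naturality of the isomorphism in Proposition~\ref{prop:mapping in necklaces} shows that the induced functor $C_T(\alpha,\omega)\ra C_U(\alpha,\omega)$ sends a subset $X\ss V_T$ containing $J_T$ to $f(X)\cup J_U$, which automatically lies in $C_U(\alpha,\omega)$ and extends to flags in the obvious way. Once this explicit formula for $f_*$ is in hand the remaining verifications are routine bookkeeping.
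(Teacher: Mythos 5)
Your argument is correct and is essentially the paper's own proof: the paper likewise invokes Proposition~\ref{pr:jC-main} to identify $\jC(S)(a,b)$ with the colimit defining $E_S(a,b)$, writes that colimit as a levelwise coequalizer, and identifies the $n$-simplices of each $\jC(T)(\alpha,\omega)$ with flags via Proposition~\ref{prop:mapping in necklaces}, so your reading-off of the face/degeneracy maps and of the generating relation matches exactly. One small correction to your final paragraph: the map you cite as an example, $\Delta^2\ra\Delta^1\Wedge\Delta^1$ sending the middle vertex to the central joint, does not exist (no $2$-simplex of $\Delta^1\Wedge\Delta^1$ has vertex sequence running through all three vertices); the phenomenon you want is illustrated instead by the spine inclusion $\Delta^1\Wedge\Delta^1\ra\Delta^2$, where a joint maps to a non-joint, and since one checks that $J_U\subseteq f(J_T)$ for any endpoint-preserving map of necklaces $f\colon T\ra U$, the induced map on flags is simply $X\mapsto f(X)$, so your ``$\cup\, J_U$'' is redundant (though harmless).
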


\begin{proof}
This is a straightforward interpretation of the colimit  appearing in
the definition of $E_S$ from (\ref{eq:colim-map}).  Recall that  every
colimit can be written as a coequalizer
\[ 
 \colim_{T\ra S \in (\Nec\ovcat S)_{a,b}} \Bigl [
\jC(T)(\alpha,\omega)\Bigr ] \iso \coeq \Bigl[
\coprod_{T_1\ra T_2\ra S} \jC(T_1)(\alpha,\omega)
\dbra \coprod_{T\ra S} \jC(T)(\alpha,\omega)\Bigr],
\]
and that elements of $\jC(T)$ are identified with flags of subsets of
$V_T$, containing $J_T$, by Lemma~\ref{prop:mapping in necklaces}.
\end{proof}

Our next goal is to simplify the equivalence relation appearing in
Corollary~\ref{co:description1} somewhat.  This analysis is somewhat
cumbersome, but culminates in the important Proposition~\ref{pr:Fn-hodiscrete}.

Let us begin by introducing some terminology.  A \dfn{flagged
necklace} is a pair $[T,\vect{T}]$ where $T$ is a necklace and
$\vect{T}$ is a flag of subsets of $V_T$ which all contain $J_T$.  
The \dfn{length of the flag} is the number of subset symbols, or one less
than the number of subsets.  
A morphism of flagged necklaces $[T,\vect{T}]\ra [U,\vect{U}]$ exists
only if the flags have the same length, in which case it is a map of necklaces
$f\colon T\ra U$   such that $f(T^i)=U^i$ for all $i$.
Finally, a flag $\vect{T}=(T^0\subseteq \cdots \subseteq T^n)$ is
called \dfn{flanked} if $T^0=J_T$ and $T^n=V_T$.  Note that if
$[T,\vect{T}]$ and $[U,\vect{U}]$ are both flanked, then every
morphism $[T,\vect{T}]\ra [U,\vect{U}]$ is surjective (because its
image will be a subnecklace of $U$ having the same joints and vertices
as $U$, hence it must be all of $U$).

\begin{lemma}
\label{le:flankify}
Under the equivalence relation of Corollary~\ref{co:description1},
each of the triples $[T,T\ra S,\vect{T}]$ is equivalent to one in which the
flag is flanked.  Moreover, two flanked triples are equivalent (in the
sense of Corollary~\ref{co:description1}) if and only if they can be
connected by a zig-zag of morphisms of flagged necklaces in which
every triple of the zig-zag is flanked.  
\end{lemma}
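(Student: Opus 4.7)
My plan is to introduce a canonical \emph{flankification} $(T,\vec{T})\mapsto (T^{fl},\vec{T^{fl}})$ of triples, verify that the original is always equivalent to its flankification, and then establish a single factorization lemma that allows one to ``flankify'' an entire zig-zag. Given $[T,T\to S,\vec{T}]$ with flag $J_T\subseteq T^0\subseteq\cdots\subseteq T^n\subseteq V_T$, I would construct $T^{fl}$ in two stages: first, take the unique sub-necklace $T'\subseteq T$ with $J_{T'}=J_T$ and $V_{T'}=T^n$, obtained by restricting each bead of $T$ to the sub-simplex spanned by its vertices lying in $T^n$; second, break each bead of $T'$ at every vertex in $T^0\setminus J_T$, yielding $T^{fl}\hookrightarrow T'$ with $J_{T^{fl}}=T^0$ and $V_{T^{fl}}=T^n$. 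Transporting $\vec{T}$ verbatim (each $T^i$ sits inside $T^n=V_{T^{fl}}$) produces a flanked flag $\vec{T^{fl}}$. Part (1) is then immediate: the composite inclusion $T^{fl}\hookrightarrow T'\hookrightarrow T$ is a morphism of necklaces over $S$ carrying $\vec{T^{fl}}$ to $\vec{T}$ on the nose, giving a single-step equivalence.

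For Part (2), I would reduce to the following factorization claim.

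\textbf{Claim.} \emph{Any morphism of flagged necklaces $f:(A,\vec{A})\to(B,\vec{B})$ over $S$ with $A$ flanked factors uniquely through the canonical inclusion $\iota:(B^{fl},\vec{B^{fl}})\hookrightarrow(B,\vec{B})$, yielding a morphism $\tilde{f}:(A,\vec{A})\to(B^{fl},\vec{B^{fl}})$ of flanked triples.}

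I expect this Claim to be the main obstacle, and would prove it as follows. Pick a bead $\beta\subseteq A$ with boundary joints $j,j'$, which are consecutive in $J_A$. By Lemma \ref{lemma:facts on ordered}(6), $f(\beta)$ is a single sub-simplex of $B$ lying in one bead of $B$, with endpoints $f(j),f(j')\in f(J_A)=B^0$ and all vertices in $f(V_A)=B^n$. If some $j^*\in B^0$ were strictly between $f(j)$ and $f(j')$, then since $B^0=f(J_A)$ we would have $j^*=f(j^{**})$ for a joint $j^{**}\in J_A$; order-preservation of the simplicial map $f$ would then force $j^{**}$ strictly between $j$ and $j'$ in $A$, contradicting that $j,j'$ are consecutive in $J_A$. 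Hence $f(j),f(j')$ are consecutive in $J_{B^{fl}}=B^0$, and $f(\beta)$ lies entirely in the corresponding bead of $B^{fl}$. Piecing this over all beads of $A$ exhibits $f(A)\subseteq B^{fl}$, defining $\tilde{f}$; uniqueness is automatic since $\iota$ is an inclusion, and $\tilde{f}_*(\vec{A})=\vec{B^{fl}}$ is a routine check.

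Granted the Claim, Part (2) unfolds cleanly. Given a zig-zag of triples whose endpoints $A_0,A_k$ are flanked, replace each $A_i$ by its flankification $A_i^{fl}$ (so $A_0^{fl}=A_0$ and $A_k^{fl}=A_k$). For any arrow $A_i\to A_{i+1}$ of the original zig-zag, first precompose with the canonical inclusion $A_i^{fl}\hookrightarrow A_i$, which is itself a morphism of flagged necklaces, to obtain a morphism out of the flanked source $A_i^{fl}$; then apply the Claim to factor it through $A_{i+1}^{fl}$, producing a morphism $A_i^{fl}\to A_{i+1}^{fl}$ of flanked triples. Arrows in the reverse direction are handled symmetrically. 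The result is the desired zig-zag of flanked triples connecting $A_0$ to $A_k$.
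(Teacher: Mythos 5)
Your proposal is correct and follows essentially the same route as the paper: the necklace $T^{fl}$ you build (in two stages) is exactly the paper's unique subnecklace with joints $T^0$ and vertices $T^n$, part (1) is the same single inclusion over $S$, and your factorization Claim is precisely the paper's assertion that flankification is functorial (which the paper states with less detail), applied to the whole zig-zag just as the paper does.
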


\begin{proof}
Suppose given a flagged necklace $[T,T^0\subseteq \cdots \subseteq
T^n]$.  There is a unique subnecklace $T'\inc T$ whose set of joints
is $T^0$ and whose vertex set is $T^n$.  Then the pair
$(T',T^0\subseteq \cdots \subseteq T^n)$ is flanked.  This assignment,
which we call {\em flankification}, is actually functorial: a morphism
of flagged necklaces $f\colon[T,\vect{T}]\ra [U,\vect{U}]$ must map
$T'$ into $U'$ and therefore gives a morphism $[T',\vect{T}]\ra
[U',\vect{U}]$.

Using the equivalence relation of Corollary~\ref{co:description1},
each triple $[T,T\ra S,\vect{T}]$ will be equivalent to the flanked
triple $[T',T'\ra T\ra S,\vect{T}]$ via the map $T'\ra T$.  If the
flanked triple $[U,U\ra S,\vect{U}]$ is equivalent to the flanked
triple $[V,V\ra S,\vect{V}]$ then there is a zig-zag of maps between
triples which starts at the first and ends at the second, by Corollary \ref{co:description1}.  Applying
the flankification functor gives a
corresponding zig-zag in which every object is flanked.
\end{proof}

\begin{remark}\label{rem:flanked and outers}
By the previous lemma, we can alter our model for $\jC(S)(a,b)$ so
that the $n$-simplices are equivalence classes of triples $[T,T\ra
S,\vect{T}]$ in which the flag is flanked, and the equivalence
relation is given by maps (which are necessarily surjections) of
flanked triples.  Under this model the degeneracies and inner faces
are given by the same description as before: repeating or omitting one
of the subsets in the flag.  The outer faces $d_0$ and $d_n$ are now
more complicated, however, because omitting the first or last subset
in the flag may produce one which is no longer flanked; one must first
remove the subset and then apply the flankification functor from
Lemma~\ref{le:flankify}.   This model for $\jC(S)(a,b)$ was originally
shown to us by Jacob Lurie; it will play only a very minor role in 
what follows.
\end{remark}

Our next task will be to analyze surjections of flagged triples.  Let
$T$ be a necklace and $S$ a simplicial set.  Say that a map $T\ra S$
is \dfn{totally nondegenerate} if the image of each bead of $T$ is a
nondegenerate simplex of $S$.  Note a totally nondegenerate map need
not be an injection: for example, let $S=\Delta^1/\bd{1}$ and consider
the nondegenerate $1$-simplex $\Delta^1 \ra S$.

Recall that in a simplicial set $S$, if $z\in S$ is a
degenerate simplex then there is a unique nondegenerate simplex $z'$
and a unique degeneracy operator $s_\sigma=s_{i_1}s_{i_2}\cdots
s_{i_k}$ such that $z=s_\sigma(z')$; see \cite[Lemma 15.8.4]{H}.
Using this, and the fact that degeneracy operators correspond to
surjections of simplices, one finds that for any map $T\ra S$ there is
a necklace $\overline{T}$, a map $\overline{T}\ra S$ which is totally
nondegenerate, and a surjection of necklaces $T\ra
\overline{T}$ making the evident triangle commute; moreover, these
three things are unique up to isomorphism.

\begin{prop}
\label{pr:lift}
Let $S$ be a simplicial set and let $a,b\in S_0$.  
\begin{enumerate}[(a)]
\item Suppose that $T$ and $U$ are necklaces, $U\To{u} S$ and $T\To{t}
S$ are two maps, and that $t$ is totally nondegenerate.  Then
there is at most one surjection $f\colon U\fib T$
such that $u=t\circ f$.
\item
Suppose that one
has a diagram
\[ \xymatrix{ U \ar@{->>}[d]_g\ar@{->>}[r]^f & T \ar[d] \\
V\ar[r] & S
}
\]
where $T$, $U$, and $V$ are flagged necklaces, $T\ra S$ is totally
nondegenerate, and $f$ and $g$ are surjections.  Then there
exists a unique map of flagged necklaces $V\ra T$ making the diagram
commute.
\end{enumerate}
\end{prop}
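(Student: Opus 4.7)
The plan is to deduce both parts from the uniqueness-up-to-isomorphism of the totally nondegenerate factorization of a map into $S$ (recalled in the paragraph preceding the proposition), combined with the rigidity of necklace automorphisms. For part~(a), given two surjections $f_1,f_2\colon U\twoheadrightarrow T$ with $t\circ f_1=t\circ f_2=u$, each triple $(U\To{f_i}T\To{t}S)$ is a totally nondegenerate factorization of $u$, so the cited uniqueness statement produces an isomorphism $\phi\colon T\To{\cong}T$ of necklaces with $\phi\circ f_1=f_2$ and $t\circ\phi=t$. One then shows $\phi=\id_T$: by Lemma~\ref{lemma:facts on ordered}(4) the map $\phi$ is determined by its action on $V_T$, and this action must be an order-preserving bijection (since $\phi$, being a simplicial isomorphism, preserves $\poleq$) of the finite totally ordered set $V_T$ fixing both $\alpha_T$ and $\omega_T$, forcing it to be the identity. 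Hence $f_1=f_2$.

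For part~(b), I would apply the totally nondegenerate factorization to the map $V\to S$, writing it as $V\twoheadrightarrow W\To{t'}S$. Composing with $g$ produces a second totally nondegenerate factorization $U\twoheadrightarrow V\twoheadrightarrow W\To{t'}S$ of the map $u=(V\to S)\circ g=t\circ f$, so uniqueness yields an isomorphism $\phi\colon W\To{\cong}T$ making the evident diagrams commute. I then define $h$ to be the composite $V\twoheadrightarrow W\To{\phi}T$; by construction $h\circ g=f$ and $t\circ h=(V\to S)$. The flag condition $h(V^i)=T^i$ drops out from the chain $h(V^i)=h(g(U^i))=f(U^i)=T^i$, using $g(U^i)=V^i$ (since $g$ is a map of flagged necklaces) and $f(U^i)=T^i$. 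Uniqueness of $h$ is immediate: any competing $h'$ satisfies $h'\circ g=f=h\circ g$, and $g$ is surjective on vertices, so $h'$ and $h$ agree on $V_0$, hence $h'=h$ by Lemma~\ref{lemma:facts on ordered}(4).

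The main obstacle --- and the only real content --- is verifying the rigidity of necklace automorphisms used in part~(a). This rests on the observations that a necklace is an ordered simplicial set equipped with distinguished vertices $\alpha_T$ and $\omega_T$ which are respectively the minimum and maximum of the totally ordered vertex set $V_T$, and that an order-preserving self-bijection of a finite totally ordered set fixing its extremes is forced to be the identity. Once this rigidity is in hand, both parts follow by formal manipulation with the uniqueness of totally nondegenerate factorizations, together with a direct set-theoretic check of the flag condition in part~(b).
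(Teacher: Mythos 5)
Your proof is correct, but it takes a genuinely different route from the paper's. You deduce both parts formally from the uniqueness-up-to-isomorphism of the totally nondegenerate factorization recalled just before the proposition, upgraded to on-the-nose uniqueness by the rigidity observation that any simplicial isomorphism of a necklace preserves the total order $\poleq$ on its finite vertex set, hence is the identity on vertices and therefore the identity by Lemma~\ref{lemma:facts on ordered}(2),(4); your uniqueness argument in (b) (surjectivity of $g$ on vertices plus Lemma~\ref{lemma:facts on ordered}(4)) and your flag check $h(V^i)=h(g(U^i))=f(U^i)=T^i$ are also fine. The paper instead works directly from the Eilenberg--Zilber lemma \cite[Lemma 15.8.4]{H}: for (a) it compares the two surjections at the first bead of $U$ where they disagree, using that each bead is either collapsed or carried onto a bead of $T$ and that degeneracies of a nondegenerate simplex are unique; for (b) it first collapses the beads of $U$ killed in $V$ and then builds the lift $V\to T$ bead by bead via degeneracy operators, getting uniqueness from (a). The trade-off is that your argument is shorter and cleanly isolates its two ingredients, but it makes the compatibility clause of the ``unique up to isomorphism'' assertion load-bearing; that assertion is stated in the paper without proof, and justifying it requires essentially the same bead-by-bead analysis the paper's proof performs directly, so your approach relocates rather than eliminates the combinatorial content, while the paper's proof is self-contained from Hirschhorn's lemma.
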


\begin{proof}
We first make the observation that if $A\ra B$ is a surjection of
necklaces and $B\neq *$ then every bead of $B$ is surjected on by a unique bead of
$A$.  Also, each bead of $A$ is either collapsed onto a joint of $B$
or else mapped surjectively onto a bead of $B$.

For (a), note that we may assume $T\neq *$ (or else the claim is
trivial).  Assume there are two distinct surjections $f,f'\colon U\ra T$ such
that $tf=tf'=u$.  Let $B$ be the first bead of $U$ on which $f$ and
$f'$ disagree.  Let $j$ denote the
initial vertex of $U$, and let $C$ be the bead of $T$ whose initial
vertex is $f(j)=f(j')$.

If $u$ maps $B$ to a point in $S$
then $B$ cannot surject onto the bead $C$ (using that $T\ra S$ is
totally nondegenerate); so $B$ must be collapsed to a point by both $f$
and $f'$.  Alternatively, if $u$ does not map $B$ to a point  then $B$ must
surject onto the bead $C$ via both $f$ and $f'$; this
identifies the simplex $B\ra U\ra S$ with a degeneracy of the
nondegenerate simplex $C\ra S$.  Then by uniqueness of degeneracies we
have that $f$ and $f'$ must coincide on $B$, which is a
contradiction.

Next we turn to part (b).  Note that the map $V\ra T$ will necessarily
be surjective, so the uniqueness part is guaranteed by (a); we need
only show existence.  

Observe that if $B$ is a bead in $U$ which maps to a point in $V$
then it maps to a point in $T$, by the reasoning above.  It now follows that
there exists a necklace $U'$, obtained by collapsing every bead of $U$
that maps to a point in $V$, and a commutative diagram
$$\xymatrix@=14pt{U\ar@{->>}[rr]^f\ar@{->>}[rd]\ar@{->>}[dd]_g&&T\ar[dd]\\
&U'\ar@{->>}[ru]^(.4){f'}\ar@{->>}[dl]_(.4){g'}\\V\ar[rr]&&S}$$
Replacing $U, f,$ and $g$ by $U',f',$ and $g'$, and dropping the
primes, we can now assume that $g$ induces a one-to-one correspondence
between beads of $U$ and beads of $V$.  Let $B_1,\ldots,B_m$ denote
the beads of $U$, and let $C_1,\ldots,C_m$ denote the beads of $V$.

Assume that we have constructed the lift $l\colon V\ra T$ on the beads
$C_1,\ldots,C_{i-1}$.  If the bead $B_i$ is mapped by $f$ to a point,
then evidently we can define $l$ to map $C_i$ to this same point and
the diagram will commute.  Otherwise $f$ maps $B_i$ surjectively onto
a certain bead $D$ inside of $T$.  We have the diagram
\[\xymatrix{B_i \ar@{->>}[r]^f \ar@{->>}[d]_{g} 
& D\ar[d]^{t}\\
C_i \ar[r]_{v} & S
}
\] 
where here $f$ and $g$ are surjections between simplices and therefore
represent degeneracy operators $s_f$ and $s_g$.  We have that
$s_f(t)=s_g(v)$.  But the simplex $t$ of $S$ is nondegenerate by
assumption, therefore by \cite[Lemma 15.8.4]{H} we must have $v=s_h(t)$ for
some degeneracy operator $s_h$ such that $s_f=s_gs_h$.  The operator
$s_h$
corresponds to a surjection of simplices $C_i \ra D$ making the  above
square commute, and we define $l$ on $C_i$ to coincide with this map.
Continuing by induction, this produces the desired lift $l$.
It is easy to see that $l$ is a map of flagged necklaces, as
$l(V^i)=l(g(U^i))=f(U^i)=T^i$.  
\end{proof}

\begin{cor}
\label{co:unique-triple}
Let $S$ be a simplicial set and $a,b\in S_0$.  
Under the equivalence relation from Corollary~\ref{co:description1},
every triple $[T,T\ra S_{a,b},\vect{T}]$ is equivalent to a unique triple
$[U,U\ra S_{a,b},\vect{U}]$ which is both flanked and totally nondegenerate.
\end{cor}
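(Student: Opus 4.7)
The plan is to prove existence and uniqueness separately, with the main work in uniqueness done by leveraging Proposition~\ref{pr:lift}.

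For existence, I would start with an arbitrary triple $[T, t, \vec{T}]$ and apply two reductions. First, use Lemma~\ref{le:flankify} to replace it with an equivalent flanked triple. Second, factor the structure map as $T \xrightarrow{p} \bar{T} \xrightarrow{\bar{t}} S$, where $\bar{t}$ is totally nondegenerate and $p$ is the canonical surjection of necklaces described just before Proposition~\ref{pr:lift}. Define $\vec{\bar{T}} := (p(T^0) \subseteq \cdots \subseteq p(T^n))$. A surjection of necklaces restricts to surjections $J_T \twoheadrightarrow J_{\bar{T}}$ and $V_T \twoheadrightarrow V_{\bar{T}}$ (since each bead of $\bar{T}$ is the image of a unique bead of $T$ and joints are preserved by necklace maps), so flankedness is preserved: $p(T^0)=J_{\bar{T}}$ and $p(T^n)=V_{\bar{T}}$. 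The map $p$ then gives a morphism of flagged necklaces $[T,\vec{T}] \to [\bar{T},\vec{\bar{T}}]$ over $S$, realizing the required equivalence.

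For uniqueness, suppose $[T,t,\vec{T}]$ and $[U,u,\vec{U}]$ are both flanked and totally nondegenerate and represent the same class. By Lemma~\ref{le:flankify}, they are connected by a zig-zag of morphisms of flanked triples. The plan is to show (i) that the totally-nondegenerate-ification construction above extends functorially to morphisms of flanked triples, and (ii) that every morphism between two flanked totally nondegenerate triples is an isomorphism. Granting these, applying the functor to the zig-zag yields a zig-zag of isomorphisms whose endpoints are $[T,t,\vec{T}]$ and $[U,u,\vec{U}]$ themselves, so these must be isomorphic as triples.

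For (i), given a morphism $\phi\colon[V,v,\vec{V}] \to [W,w,\vec{W}]$ of flanked triples, Proposition~\ref{pr:lift}(b) applied to the surjections $V \twoheadrightarrow \bar{V}$ and $V \twoheadrightarrow W \twoheadrightarrow \bar{W}$ (using that $\bar{W}\to S$ is totally nondegenerate) produces a unique map $\bar{\phi}\colon\bar{V}\to\bar{W}$ of flagged necklaces over $S$. Functoriality follows from the uniqueness clause of Proposition~\ref{pr:lift}(a). For (ii), given $\phi\colon[X,x,\vec{X}]\to[Y,y,\vec{Y}]$ between flanked totally nondegenerate triples, apply Proposition~\ref{pr:lift}(b) with $U=X$, $V=Y$, $T=X$, $f=\id_X$, $g=\phi$ to produce $\psi\colon Y\to X$ of flagged necklaces over $S$ with $\psi\circ\phi=\id_X$. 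Then $\phi\circ\psi$ and $\id_Y$ are both surjections $Y\to Y$ compatible with $y\colon Y\to S$, so by Proposition~\ref{pr:lift}(a) they coincide, whence $\phi$ is an isomorphism.

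The main obstacle is bookkeeping: verifying that the totally-nondegenerate-ification really is functorial and that all invocations of Proposition~\ref{pr:lift} have the correct shape and hypothesis (particularly that intermediate composites of surjections remain surjections and that the flag-pushforward continues to interact correctly with joints and vertices). Once these details are marshalled, Proposition~\ref{pr:lift} does the essential work.
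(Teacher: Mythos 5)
Your proof is correct and follows essentially the same route as the paper: existence by flankification followed by the totally nondegenerate quotient, and uniqueness by collapsing the zig-zag of flanked triples from Lemma~\ref{le:flankify} using both parts of Proposition~\ref{pr:lift}. The only difference is organizational---the paper inductively applies part (b) along the zig-zag to build surjections onto each of the two endpoints and then invokes part (a) to see they are inverse isomorphisms, whereas you package the same applications of (b) as a functorial nondegenerate-ification together with the observation that any morphism between flanked, totally nondegenerate triples is an isomorphism.
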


\begin{proof}
Let $t=[T,T\ra S_{a,b},\vect{T}]$.  Then $t$ is clearly equivalent to at
least one flanked, totally nondegenerate triple because we can replace
$t$ with $[T',T'\ra S_{a,b},\vect{T}]$ (flankification) and then with
$[\overline{T'},\overline{T'}\ra S_{a,b},\vect{T}']$ (defined above Proposition \ref{pr:lift}).

Now suppose that $[U,U\ra S_{a,b},\vect{U}]$ and $[V,V\ra S_{a,b},\vect{V}]$ are
both flanked, totally nondegenerate, and equivalent in
$\jC(S)(a,b)_n$.  Then by Lemma~\ref{le:flankify} there is a zig-zag
of maps between flanked necklaces (over $S$) connecting $U$ to $V$:
\[ \xymatrixcolsep{1.3pc}
\xymatrix{ &W_1\ar@{->>}[dl]\ar@{->>}[dr] && W_2\ar@{->>}[dl]\ar@{->>}[dr] && \cdots\ar@{->>}[dl]\ar@{->>}[dr] &&
W_k\ar@{->>}[dl]\ar@{->>}[dr] \\
U=U_1 && U_2 && U_3 & \cdots & U_{k}& &\,U_{k+1}=V
}
\]
Using Proposition~\ref{pr:lift}, we inductively construct surjections of
flanked necklaces $U_i\ra U$ over $S$.  This produces a surjection $V\ra U$
over $S$.  Similarly, we obtain a surjection $U\to V$ over $S$.  By
Proposition~\ref{pr:lift}(a) these maps must be inverses of each other; that
is, they are isomorphisms.
\end{proof}

\begin{remark}
Again, as in Remark~\ref{rem:flanked and outers} the above corollary
shows that we can describe $\jC(S)(a,b)$ as the simplicial set whose
$n$-simplices are triples $[T,T\ra S,\vect{T}]$ which are both flanked
and totally nondegenerate.  The degeneracies and inner faces are again
easy to describe---they are repetition or omission of a set in the
flag---but for the outer faces one must first omit a set and then
modify the triple appropriately.  The usefulness of this description
is 
limited because of these complications with the outer faces, but it does
make a brief appearance in
Corollary~\ref{cor:maps in ordered} below.
\end{remark}

The following result is the culmination of our work in this section,
and will turn out to be a key step in the proof of our main theorems.
Fix a simplicial set $S$ and vertices $a,b\in S_0$, and let $F_n$
denote the category of flagged triples over $S_{a,b}$ that have length
$n$.  That is, the objects of $F_n$ are triples $[T,T\ra
S_{a,b},T^0\subseteq \cdots
\subseteq T^n]$ and morphisms are maps of necklaces $f\colon T\ra T'$
over $S$ such that $f(T^i)=(T')^i$ for all $i$.

\begin{prop} 
\label{pr:Fn-hodiscrete}
For each $n\geq 0$, the nerve of $F_n$ is homotopy discrete
in $\sSet_K$.  
\end{prop}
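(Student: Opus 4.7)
The plan is to show that every connected component of $F_n$ is contractible, by exhibiting a terminal object in each component.  The candidate terminal objects are supplied by Corollary~\ref{co:unique-triple}: each equivalence class of flagged triples of length $n$ contains a unique (up to isomorphism) flanked, totally nondegenerate representative, and these will be the terminal objects.

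As a first step I would reduce from $F_n$ to its full subcategory $G_n\ss F_n$ of flanked triples.  By Lemma~\ref{le:flankify}, flankification is functorial, giving $\Phi\colon F_n\to G_n$, and the canonical inclusion of the flankified subnecklace into the original is a morphism in $F_n$ natural in $[T,\vect{T}]$, yielding a natural transformation $\iota\circ\Phi\Rightarrow\id_{F_n}$, where $\iota\colon G_n\inc F_n$ is the inclusion.  Since $\Phi\circ\iota=\id_{G_n}$ on the nose, the induced maps on nerves $N\iota$ and $N\Phi$ are homotopy inverse, so it suffices to show $NG_n$ is homotopy discrete.

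Next, fix a connected component of $G_n$ and let $[U,U\ra S,\vect{U}]$ be the flanked, totally nondegenerate representative from Corollary~\ref{co:unique-triple}; I claim this is terminal in its component.  Given any $[T,T\ra S,\vect{T}]$ in the same component, Lemma~\ref{le:flankify} produces a zigzag of flanked flagged triples between them.  Proceeding exactly as in the proof of Corollary~\ref{co:unique-triple}, iterated application of Proposition~\ref{pr:lift}(b)---with the totally nondegenerate $U$ playing the role of the upper-right corner each time---inductively composes this zigzag into a single surjection of flagged necklaces $T\to U$ over $S$.  Uniqueness of such a morphism is immediate from Proposition~\ref{pr:lift}(a) applied to the totally nondegenerate target $U$.

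A category with a terminal object has contractible nerve (via the natural transformation $\id \Rightarrow \tn{const}_U$), so $NG_n$ is a disjoint union of contractible simplicial sets and hence homotopy discrete; combined with $NG_n\simeq NF_n$, this gives the result.  The main obstacle is mostly bookkeeping: verifying that the zigzag composition from Corollary~\ref{co:unique-triple} actually produces a flag-preserving morphism in $G_n$ (not merely a surjection of necklaces over $S$), but this is already delivered by the final line of the proof of Proposition~\ref{pr:lift}(b).
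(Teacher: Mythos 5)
Your proof is correct and follows essentially the same route as the paper: reduce to the subcategory of flanked triples via the flankification functor and its natural transformation to the identity, then use Corollary~\ref{co:unique-triple} together with Proposition~\ref{pr:lift} to show the unique flanked, totally nondegenerate triple is terminal in its component, so each component has contractible nerve. The paper states the terminality more briefly (``following the proof of that corollary''), but the content is the same as your iterated application of Proposition~\ref{pr:lift}(b) plus uniqueness from \ref{pr:lift}(a).
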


\begin{proof}

Recall from Lemma~\ref{le:flankify} that there is a functor $\phi\colon F_n
\ra F_n$ which sends any triple to its `flankification'.  There is a
natural transformation from $\phi$ to the identity, and the image of
$\phi$ is the subcategory $F_n'\subseteq F_n$ of flanked triples.  It
will therefore suffice to prove that (the nerve of) $F_n'$ is homotopy discrete.  

Recall from Corollary~\ref{co:unique-triple} that every component of
$F_n'$ contains a unique triple $t$ which is both flanked and totally
nondegenerate.  Moreover, following the proof of that corollary
one sees that every triple in the same component as $t$
admits a unique map to $t$---that is to say, $t$ is a final object
for its component.  Therefore its component is contractible.  This
completes the proof.

\end{proof}

\subsection{The functor $\jC$ applied to ordered simplicial sets}~

Note that even if a simplicial set $S$ is small---say, in the sense
that it has finitely many nondegenerate simplices---the space
$\jC(S)(a,b)$ may be quite large.  This is due to the fact that there
are infinitely many necklaces mapping to $S$ (if $S$ is nonempty).  For certain simplicial sets $S$,
however, it is possible to restrict to necklaces which lie {\it inside
of\/} $S$; this cuts down the possibilities.  The following results
and subsequent
example demonstrate this.  Recall the definition 
of ordered simplicial sets from (\ref{de:ordered}).

\begin{lemma}
\label{le:jCL-necklace}
Let $D$ be an ordered simplicial set and let $a,b\in D_0$.  Then every
$n$-simplex in $\jC(D)(a,b)$ is represented by a unique triple
$[T,T\ra D,\vect{T}]$ in which $T$ is a necklace, $\vect{T}$ is a flanked flag of length $n$, and the map $T\ra D$
is injective.
\end{lemma}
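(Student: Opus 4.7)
The plan is to reduce everything to Corollary~\ref{co:unique-triple}, which already supplies a unique representative triple that is flanked and totally nondegenerate. The lemma will follow once I show that, for $D$ ordered, the conditions ``totally nondegenerate'' and ``injective'' coincide on maps $T\to D$ from a necklace $T$.

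First I would handle the easy direction: an injective map $T\inj D$ restricts on each bead $\Delta^{n_i}\to D$ to an injection, which is automatically a nondegenerate simplex; hence such a map is totally nondegenerate. For the main direction, suppose $T\to D$ is totally nondegenerate. On each bead $B=\Delta^{n_i}\to D$, Lemma~\ref{lemma:facts on ordered}(6) says the image is some $\Delta^k\inj D$ with $k\leq n_i$; since the image must be a nondegenerate $n_i$-simplex, $k=n_i$ and $B\to D$ is an injection.

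It remains to upgrade injectivity of each bead to injectivity of all of $T$. Suppose for contradiction that $v\neq w$ in $T_0$ map to the same vertex $d\in D_0$, with $v\poleq_T w$ without loss of generality. If $v$ and $w$ lie in a common bead $B$, this contradicts injectivity of $B\inj D$. Otherwise, for any joint $j$ of $T$ with $v\prec_T j\prec_T w$, the relations $d=f(v)\poleq_D f(j)\poleq_D f(w)=d$ together with antisymmetry of $\poleq_D$ (using that $D$ is ordered) force $f(j)=d$. Applying the same argument inside the bead $B$ containing $v$ to the pair $v$ and $\omega_B$ (either $\omega_B=w$, or $\omega_B$ is such an intermediate joint), we conclude $f(v)=f(\omega_B)=d$ and hence $v=\omega_B$ by injectivity of $B\inj D$. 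But then in the next bead $B'$, both $\alpha_{B'}=v$ and $\omega_{B'}$ map to $d$; injectivity of $B'\inj D$ forces $\alpha_{B'}=\omega_{B'}$, which is impossible since beads have dimension $\geq 1$. This contradiction proves $T\to D$ is injective.

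With this equivalence in hand, the lemma drops out: the unique flanked totally nondegenerate triple guaranteed by Corollary~\ref{co:unique-triple} is precisely a flanked triple with injective structure map, giving existence; and any two flanked triples with injective structure maps are flanked and totally nondegenerate, hence coincide by the uniqueness part of Corollary~\ref{co:unique-triple}. The main obstacle is really just the antisymmetry argument above, which is slightly delicate because one must track carefully where the violating vertices sit relative to the bead decomposition.
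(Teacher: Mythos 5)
Your proposal is correct and takes essentially the same route as the paper: reduce to Corollary~\ref{co:unique-triple} and then observe that, for an ordered target $D$, a totally nondegenerate map from a necklace is automatically injective. The paper disposes of that last point by simply citing Lemma~\ref{lemma:facts on ordered}(6), whereas you also spell out the cross-bead step (bead-wise injectivity plus antisymmetry of $\poleq_D$), which is a detail the paper leaves implicit but not a different argument.
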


\begin{proof}
By Corollary \ref{co:unique-triple}, every $n$-simplex in
$\jC(D)(a,b)$ is represented by a unique triple $[T,T\to D,\vect{T}]$
which is both flanked and totally non-degenerate.  It suffices to show
that if $D$ is ordered, then any totally non-degenerate map $T\to D$
is injective.  This follows from Lemma \ref{lemma:facts on ordered}(6).
\end{proof}

\begin{cor}\label{cor:maps in ordered}

Let $D$ be an ordered simplicial set, and $a,b\in D_0$.  Let
$M_D(a,b)$ denote the simplicial set for which $M_D(a,b)_n$ is the set
of triples $[T,T\To{f} D_{a,b},\vect{T}]$, where $f$ is injective and
$\vect{T}$ is a flanked flag of length $n$; face and boundary maps are
as in Remark \ref{rem:flanked and outers}.  Then there is a natural
isomorphism \[\jC(D)(a,b)\To{\iso} M_D(a,b).\] 
\end{cor}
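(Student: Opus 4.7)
The plan is to exhibit the natural isomorphism on the level of $n$-simplices and then verify compatibility with the simplicial structure. By Lemma~\ref{le:jCL-necklace}, for each $n$, every $n$-simplex of $\jC(D)(a,b)$ is represented by a unique triple $[T,T\to D_{a,b},\vec{T}]$ in which $T\to D$ is injective and $\vec{T}$ is a flanked flag of length $n$. The set of such unique representatives is precisely $M_D(a,b)_n$ by definition, giving a canonical bijection $\Phi_n\colon \jC(D)(a,b)_n\to M_D(a,b)_n$ for each $n$.

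Next I would check that the $\Phi_n$ assemble into a map of simplicial sets. Using the flanked model of $\jC(D)(a,b)$ from Remark~\ref{rem:flanked and outers}, the degeneracies $s_i$ and inner faces $d_i$ (with $0<i<n$) act by repeating or omitting the set $T^i$ in the flag, leaving both the underlying necklace $T$ and the extremes $T^0,T^n$ untouched. Hence injectivity of $T\to D$ and flankedness are automatically preserved, so $\Phi$ commutes with these operators on the nose. For the outer faces $d_0$ and $d_n$, the flanked model computes the face by removing the outermost subset and then applying the flankification functor of Lemma~\ref{le:flankify}. Starting from a flanked injective triple $[T,T\to D,T^0\subseteq\cdots\subseteq T^n]$ and applying $d_0$, flankification produces the subnecklace $T'\subseteq T$ whose joint set is $T^1$ and whose vertex set is $T^n=V_T$. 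The restricted map $T'\to D$ is injective (as the restriction of an injection), and the flag $T^1\subseteq\cdots\subseteq T^n$ is flanked with respect to $T'$ by construction; the case of $d_n$ is symmetric. Thus in each case $\Phi$ commutes with the face operator, and this matches the definition of the outer faces in $M_D(a,b)$ given by Remark~\ref{rem:flanked and outers}.

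I do not expect a real obstacle: the corollary is essentially a bookkeeping verification built on top of Lemma~\ref{le:jCL-necklace} and the flanked model of Remark~\ref{rem:flanked and outers}. The only subtlety worth remarking on is that flankification of an injective triple remains injective, which is immediate because flankification merely selects a subnecklace.
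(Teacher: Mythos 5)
Your proof is correct and takes essentially the same route as the paper, which deduces the corollary directly from Lemma~\ref{le:jCL-necklace} (the paper's proof is the one-line ``follows immediately''). Your explicit check of compatibility with the simplicial operators---in particular that the outer faces, computed via flankification as in Remark~\ref{rem:flanked and outers}, stay within the injective flanked triples because flankification passes to a subnecklace---is exactly the bookkeeping the paper leaves implicit.
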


\begin{proof}

This follows immediately from Lemma \ref{le:jCL-necklace}.

\end{proof}

\begin{ex}
\label{ex:computation}
Consider the simplicial set $S=\Delta^2\amalg_{\Delta^1}\Delta^2$
depicted 
as follows:

\begin{center}\begin{picture}(40,40)\put(-4,-4){1}\put(31,-3){3}\put(-4,30){0}\put(30,30){2}\put(0,0){$\bullet$}\put(26,0){$\bullet$}\put(0,26){$\bullet$}\put(26,26){$\bullet$}\put(3,2){\vector(1,0){25}}\put(2,28){\vector(0,-1){24}}\put(28,28){\vector(0,-1){24}}\put(2,28){\vector(1,0){25}}\put(3,2){\vector(1,1){25}}\end{picture}\end{center}

We will describe the mapping space $X=\jC(S)(0,3)$ by giving its
non-degenerate simplices and face maps.

By Lemma \ref{le:jCL-necklace}, it suffices to consider flanked
necklaces that inject into $S$.  There are only five such necklaces
that have endpoints $0$ and $3$.  These are
$T=\Delta^1\vee\Delta^1$, which maps to $S$ in two different ways
$f,g$; and $U=\Delta^1\vee\Delta^1\vee\Delta^1$,
$V=\Delta^1\vee\Delta^2$, and $W=\Delta^2\vee\Delta^1$, each of which
maps uniquely into $S_{0,3}$.  The image of $T_0$ under $f$ is
$\{0,1,3\}$ and under $g$ is $\{0,2,3\}$.  The images of $U_0, V_0,$
and $W_0$ are all $\{0,1,2,3\}$.

We find that $X_0$ consists of three elements $[T;\{0,1,3\}]$,
$[T;\{0,2,3\}]$ and $[U;\{0,1,2,3\}]$.  There are two nondegenerate
$1$-simplices, $[V;\{0,1,3\}\subset\{0,1,2,3\}]$ and
$[W;\{0,2,3\}\subset\{0,1,2,3\}]$.  These connect the three
$0$-simplices in the obvious way, resulting in two $1$-simplices with
a common final vertex.  There are no higher non-degenerate simplices.
Thus $\jC(S)(0,3)$ looks like
$$\xymatrix@1{\bullet&\bullet\ar[r]\ar[l]&\bullet}.$$
\end{ex}


\section{Homotopical models for categorification}

In the last section we gave a very explicit description of the mapping
spaces $\jC(S)(a,b)$, for arbitrary simplicial sets $S$ and $a,b\in
S_0$.  While this description was explicit, in some ways it is not
very useful from a homotopical standpoint---in practice it is hard to use this
description to identify the homotopy type of $\jC(S)(a,b)$. 

In this section we will discuss a functor $\jCn\colon\sSet\ra s\Cat$
that has a simpler description than $\jC$ and which is more
homotopical.  We prove that for any simplicial set $S$ there is a
natural zigzag of weak equivalences between $\jC(S)$ and $\jCn(S)$.
Variants of this construction are also introduced, leading to a
collection of functors $\sSet\to\sCat$ all of which are weakly equivalent to $\jC$.
  
\medskip

Let $S\in \sSet$.  A choice of $a,b\in S_0$ will be
regarded as a map $\bd{1}\ra S$.  Let $(\Nec\ovcat S)_{a,b}$ be the
overcategory for the inclusion functor  $\Nec\inc (\bd{1}\ovcat S)$.  
Finally, define
\[ \jCn(S)(a,b)=N(\Nec\ovcat S)_{a,b}.
\]
This is a simplicial category in an evident way.

\begin{remark}
Both the functor $\jC$ and the functor $\jCn$ have distinct advantages
and disadvantages.  The main advantage to $\jC$ is that it is left
adjoint to the coherent nerve functor $N$ (in fact it is a left
Quillen functor $\sSet_J\ra s\Cat$); as such, it preserves
colimits.  However, as mentioned above, the functor $\jC$ can be
difficult to use in practice because the mapping spaces have an
awkward description.

It is at this point that our functor $\jCn$ becomes useful, because
the mapping spaces are given as 
nerves of 1-categories.  Many tools are available for determining
when a morphism between nerves is a Kan equivalence. 
This will be an important point in
\cite{DS}, where we show the $\jC$ functor gives a Quillen equivalence
between $\sSet_J$ and $\sCat$.  See also Section~\ref{se:properties} below.
\end{remark}

Our main theorem is that there is a simple zigzag of weak equivalences
between $\jC(S)$ and $\jCn(S)$; that is, there is a functor
$\jCh\taking\sSet\to\sCat$ and natural weak equivalences $\jC\from
\jCh\to\jCn$.  We begin by describing the functor $\jCh$. 

Fix a simplicial set $S$.   Define $\jCh(S)$ to
have object set $S_0$, and for every $a,b\in S_0$
\[ \jCh(S)(a,b)=\hocolim_{T\in (\Nec\ovcat S)_{a,b}}
\jC(T)(\alpha,\omega).\]
Note the similarities to Theorem~\ref{th:main}, where it was shown
that $\jC(S)(a,b)$ has a similar description in which the hocolim is
replaced by the colim.  In our definition of $\jCh(S)(a,b)$ we mean to
use a particular model for the homotopy colimit, namely the diagonal
of the bisimplicial set whose $(k,l)$-simplices are pairs \[ (F\taking
[k]\to(\Nec\ovcat S)_{a,b}; x\in \jC(F(0))(\alpha,\omega)_l),
\] where $F(0)$ denotes the necklace obtained by 
applying $F$ to $0\in [k]$ and then applying the forgetful functor
$(\Nec\downarrow S)_{a,b}\to\Nec$.  The composition law for $\jCh$ is
defined just as for the $E_S$ construction from Section~\ref{sec:jC}.

We proceed to establish
natural transformations $\jCh\to\jCn$ and $\jCh\to\jC$.
Note that $\jCn(S)(a,b)$ is the homotopy colimit of the constant
functor $\{*\}\taking(\Nec\ovcat S)_{a,b}\to\sSet$ which sends
everything to a point.  
The map $\jCh(S)(a,b) \ra
\jCn(S)(a,b)$ is the map of homotopy colimits induced by the evident
map of diagrams.
 Since the spaces $\jC(T)(\alpha,\omega)$ are all
contractible simplicial sets (see Corollary~\ref{cor:mapping space is
cube}), the induced map $\jCh(S)(a,b)\ra \jCn(S)(a,b)$ is a Kan
equivalence.  We thus obtain a natural weak equivalence of simplicial
categories $\jCh(S)\to\jCn(S)$.

For any diagram in a model category there is a canonical natural
transformation from the homotopy colimit to the colimit of that
diagram.  Hence there is a morphism
\[ \jCh(S)(a,b)\to\colim_{T\in(\Nec\downarrow
S)_{a,b}}\jC(T)(\alpha,\omega) \iso \jC(S)(a,b).
\] 
(For the isomorphism we are using Proposition~\ref{pr:jC-main}.)
As this is natural in $a,b\in S_0$ and
natural in $S$, we have a natural transformation $\jCh\to\jC$.

\begin{thm}\label{th:main}
For every simplicial set $S$, the maps 
$\jC(S)\from \jCh(S)\to\jCn(S)$ defined above are weak equivalences of
simplicial categories.  
\end{thm}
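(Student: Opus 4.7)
The plan is to handle the two maps separately. The equivalence $\jCh(S)\to\jCn(S)$ is already established in the paragraph preceding the theorem, since each $\jC(T)(\alpha,\omega)$ is contractible (a cube by Corollary~\ref{cor:mapping space is cube}), so the comparison of homotopy colimits with the constant point-diagram is a pointwise Kan equivalence on mapping spaces. Since $\jCh(S)$, $\jCn(S)$, and $\jC(S)$ all have object set $S_0$ and the comparison maps are the identity on objects, the $\pi_0$-surjectivity condition for a DK-equivalence is trivial, and it suffices to prove that for each $a,b\in S_0$ the natural hocolim-to-colim comparison
\[
\jCh(S)(a,b)=\hocolim_{T\in(\Nec\ovcat S)_{a,b}}\jC(T)(\alpha,\omega)\lra \colim_T\jC(T)(\alpha,\omega)\cong \jC(S)(a,b)
\]
is a Kan equivalence, where the final isomorphism is Proposition~\ref{pr:jC-main}.

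I would approach this with a bisimplicial set argument. Let $B$ denote the standard Bousfield--Kan bisimplicial model for the homotopy colimit, so $B_{k,l}$ is the set of pairs $(F\taking [k]\to(\Nec\ovcat S)_{a,b};\,x\in\jC(F(0))(\alpha,\omega)_l)$ and $\diag B=\jCh(S)(a,b)$. The hocolim-to-colim comparison extends to an augmentation of bisimplicial sets $B\to\overline Y$, where $\overline Y$ is constant in $k$ with $\overline Y_{k,l}=\jC(S)(a,b)_l$; note that $\diag\overline Y=\jC(S)(a,b)$. By the realization lemma for bisimplicial sets it suffices to show that $B_{\bullet,l}\to\overline Y_{\bullet,l}$ is a Kan equivalence for each fixed $l$.

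The decisive step is the identification $B_{\bullet,l}\cong NF_l$, where $F_l$ is the category of flagged triples of length $l$ from Proposition~\ref{pr:Fn-hodiscrete}. A $k$-simplex of $B_{\bullet,l}$ is a chain $T_0\to\cdots\to T_k$ in $(\Nec\ovcat S)_{a,b}$ together with a flag $\vect{T_0}$ of length $l$ on $T_0$; pushing $\vect{T_0}$ forward along the chain produces flags on every $T_i$. One checks that the pushforward is a valid object of $F_l$ by observing that any $\Nec$-morphism $f\taking T\to T'$ carries each bead of $T$ into a single bead of $T'$, which forces $f(J_T)\supseteq J_{T'}$, so the pushed-forward $T_i^0$ still contains $J_{T_i}$. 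This identifies $B_{\bullet,l}$ with the nerve of the Grothendieck construction on the set-valued functor $T\mapsto\jC(T)(\alpha,\omega)_l$, which is exactly $F_l$. Under this isomorphism the augmentation becomes the canonical quotient $NF_l\to\pi_0 F_l$, and $\pi_0 F_l=\jC(S)(a,b)_l$ by Corollary~\ref{co:description1} (or more pointedly by Corollary~\ref{co:unique-triple}). Proposition~\ref{pr:Fn-hodiscrete} states precisely that $NF_l$ is homotopy discrete, so this quotient is a Kan equivalence for every $l$.

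The main technical obstacle is verifying the identification $B_{\bullet,l}\cong NF_l$ as simplicial sets, including that the face and degeneracy operators in the $k$-direction match up; this rests on the joint-preservation property $f(J_T)\supseteq J_{T'}$ for necklace morphisms. All homotopical content is packaged into the already-proved Proposition~\ref{pr:Fn-hodiscrete} together with the standard realization lemma for bisimplicial sets, so once the bookkeeping is in place the theorem follows at once.
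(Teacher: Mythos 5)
Your proposal is correct and follows essentially the same route as the paper: both reduce, via the realization lemma for bisimplicial sets, to the observation that the $l$th row of the bisimplicial model for $\jCh(S)(a,b)$ is $NF_l$, that $\jC(S)(a,b)_l\iso\pi_0(NF_l)$ by Corollary~\ref{co:description1}, and that $NF_l\to\pi_0(NF_l)$ is a Kan equivalence by Proposition~\ref{pr:Fn-hodiscrete}, while the equivalence $\jCh(S)\to\jCn(S)$ was already established from the contractibility of $\jC(T)(\alpha,\omega)$. Your extra bookkeeping (the Grothendieck-construction identification of the rows and the joint-preservation property $f(J_T)\supseteq J_{T'}$) is accurate and simply makes explicit what the paper asserts in its ``recall'' statements.
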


\begin{proof}
We have already established that the natural transformation $\jCh\to\jCn$ is an
objectwise equivalence, so it suffices to show that for each
simplicial set $S$ and objects $a,b\in S_0$ the natural map
$\jCh(S)(a,b)\to\jC(S)(a,b)$ is also a Kan equivalence. 

Recall that $\jCh(S)(a,b)$ is the diagonal of a bisimplicial set whose
$l$th `horizontal' row is the nerve $NF_l$ of the category of flagged
necklaces mapping to $S$, where the flags have length $l$.  
Also recall from Corollary~\ref{co:description1} that $\jC(S)(a,b)$ is the simplicial set which
in level $l$ is $\pi_0(NF_l)$.  But
Proposition~\ref{pr:Fn-hodiscrete} says that $NF_l \ra \pi_0(NF_l)$ is
a Kan equivalence, for every $l$.  It follows that $\jCh(S)(a,b)\ra
\jC(S)(a,b)$ is also a Kan equivalence.
\end{proof}

\subsection{Other models for categorification}
\label{se:gadgets}

One can imagine variations of our basic construction in which one
replaces necklaces with other convenient simplicial sets---which we
might term ``gadgets'', for lack of a better word.  We will see in
Section~\ref{sec:properties of categorification}, for instance, that
using {\it products\/} of necklaces leads to a nice theorem about the
categorification of a product.  In \cite{DS}
several key arguments will hinge on a clever choice of what gadgets to
use.  In the material below we give some basic requirements of the
``gadgets'' which will ensure they give a model equivalent to that of
necklaces.

Suppose $\cP$ is a subcategory of $\bpSet=(\bd{1}\ovcat\sSet)$ containing
the terminal object.  For any simplicial set $S$ and vertices $a,b\in
S_0$, let $(\cP\ovcat S)_{a,b}$ denote the overcategory whose objects
are pairs $[P,P\to S]$, where $P\in\cP$ and the map $P\to S$ sends
$\alpha_P\mapsto a$ and $\omega_P\mapsto b$.  Define
\[\jC^\cP(S)(a,b)=N(\cP\ovcat S)_{a,b}.\] The object $\jC^\cP$ is
simply an assignment which takes a simplicial set $S$ with two
distinguished vertices and produces a ``$\cP$-mapping space."  However, if
$\cP$ is closed under the wedge operation (i.e. for any
$P_1,P_2\in\cP$ one has $P_1\vee P_2\in\cP$), then $\jC^\cP$ may be
given the structure of a functor $\sSet\to\sCat$ in the evident way.

\begin{defn}\label{de:gadgets}

We call a subcategory $\kG\subseteq\bpSet$ a \dfn{category of gadgets} if it satisfies the following properties:
\begin{enumerate}[(1)]
\item $\kG$ contains the category $\Nec$, 
\item For every object $X\in \kG$ and every necklace $T$, all maps
$T\ra X$ are contained in $\kG$, and 
\item For any $X\in \kG$, the simplicial set $\jC(X)(\alpha,\omega)$
is contractible.
\end{enumerate}

The category $\kG$
is said to be \dfn{closed under wedges} if it is also true that
\begin{enumerate}[(4)]
\item For any $X,Y\in \kG$, the wedge $X\Wedge Y$ also belongs to $\kG$.
\end{enumerate}

\end{defn}

The above definition can be generalized somewhat by allowing $\Nec\to\kG$ to be an arbitrary functor over a natural transformation in $\sSet$; we do not need this generality in the present paper.

\begin{prop}
\label{pr:gadget}
Let $\kG$ be a category of gadgets.  Then for any simplicial set $S$
and any $a,b\in S_0$,
the natural map
\[ \jCn(S)(a,b) \lra \jC^{\kG}(S)(a,b) \]
(induced by the inclusion $\Nec\inc \kG$)
is a Kan equivalence.  If $\kG$ is closed under wedges then the map of
simplicial categories $\jCn(S) \ra \jC^{\kG}(S)$ is a weak equivalence.
\end{prop}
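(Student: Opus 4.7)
The plan is to apply Quillen's Theorem A to the inclusion functor $F\taking(\Nec\ovcat S)_{a,b}\inj(\kG\ovcat S)_{a,b}$.  First I would identify the overcategory $F\ovcat [X,X\to S_{a,b}]$ for each object $[X,X\to S_{a,b}]$ of $(\kG\ovcat S)_{a,b}$.  An object of this overcategory consists of a necklace $T$ equipped with a basepoint-preserving map $T\to S_{a,b}$, together with a morphism $[T,T\to S_{a,b}]\to [X,X\to S_{a,b}]$ in $(\kG\ovcat S)_{a,b}$, i.e., a basepoint-preserving map $\phi\taking T\to X$ in $\kG$ making the triangle over $S$ commute.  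Since $\phi$ determines the map $T\to S_{a,b}$ as the composite $T\to X\to S$, and since property (2) of gadgets guarantees that every basepoint-preserving $T\to X$ automatically lies in $\kG$, this overcategory is canonically isomorphic to $(\Nec\ovcat X)_{\alpha_X,\omega_X}$.

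Next I would observe that the nerve of $(\Nec\ovcat X)_{\alpha_X,\omega_X}$ is $\jCn(X)(\alpha_X,\omega_X)$ by definition.  Applying Theorem~\ref{th:main} to the simplicial set $X$, this is Kan equivalent to $\jC(X)(\alpha_X,\omega_X)$, which is contractible by property (3) of gadgets.  Thus every overcategory $F\ovcat d$ has contractible nerve, and Quillen's Theorem A yields that $F$ induces a weak equivalence on nerves; this is exactly the first assertion.

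For the second assertion, suppose $\kG$ is closed under wedges, so that $\jC^{\kG}(\blank)$ acquires the structure of a functor $\sSet\to\sCat$.  The inclusion $\Nec\inj\kG$ preserves wedges (both are computed inside $\bpSet$), so the induced map $\jCn(S)\to\jC^{\kG}(S)$ respects composition and is the identity on the common object set $S_0$.  The $\pi_0$-essential-surjectivity condition in the definition of a DK-equivalence is therefore automatic, and by the first part the map is a Kan equivalence on every mapping space; hence it is a weak equivalence of simplicial categories.

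The only substantive point in the argument is the identification of the overcategory in the first paragraph: one must verify that the factorization condition over $S$ contributes no restriction beyond ``basepoint-preserving map of necklaces into $X$,'' which is precisely the content of property (2).  Once this is in hand, Theorem~\ref{th:main} combined with Quillen's Theorem A closes everything out cleanly.
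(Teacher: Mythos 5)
Your proof is correct and follows essentially the same route as the paper: Quillen's Theorem A applied to the inclusion $(\Nec\ovcat S)_{a,b}\to(\kG\ovcat S)_{a,b}$, identifying each overcategory with $(\Nec\ovcat X)_{\alpha,\omega}$ via property (2), and concluding contractibility from Theorem~\ref{th:main} together with property (3). Your added remarks — that property (2) is exactly what makes the overcategory identification work, and that the second claim follows since the map is the identity on objects — are correct elaborations of points the paper leaves implicit.
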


\begin{proof}
Let $j\colon (\Nec\ovcat S)_{a,b} \ra (\kG \ovcat S)_{a,b}$ be the
functor induced by the inclusion map $\Nec \inc \kG$.  The map in the
statement of the proposition is just the nerve of $j$.  To verify that
it is a Kan equivalence, it is enough by Quillen's Theorem A \cite{Q}
to verify that all the overcategories of $j$ are contractible.  So fix
an object $[X,X\ra S]$ in $(\kG\ovcat S)_{a,b}$.  The overcategory
$(j\ovcat [X,X\ra S])$ is precisely the category $(\Nec\ovcat
X)_{\alpha,\omega}$, the nerve of which is $\jCn(X)(\alpha,\omega)$.
By Theorem~\ref{th:main} and our assumptions on $\kG$, this is
contractible.

The second statement of the result is a direct consequence of the first.
\end{proof}


\section{Properties of categorification}\label{sec:properties of
categorification}
\label{se:properties}

In this section we establish two main properties of the
categorification functor $\jC$.  First, we prove that there is a
natural weak equivalence $\jC(X\times Y) \he \jC(X)\times \jC(Y)$.
Second, we prove that whenever $S\ra S'$ is a Joyal equivalence
it follows that $\jC(S)\ra \jC(S')$ is a weak equivalence in
$\sCat$.  These properties are also proven in \cite{L}, but the proofs
we give here are of a different nature and make central use of the
$\jCn$ functor.

\medskip

If $T_1,\ldots,T_n$ are necklaces then they are, in particular,
ordered simplicial sets in the sense of Definition~\ref{de:ordered}.
So $T_1\times \cdots \times T_n$ is also ordered, by
Lemma~\ref{lemma:facts on ordered}.  Let $\kG$ be the full subcategory
of $\bpSet=(\bd{1}\ovcat \sSet)$ whose objects are products of necklaces
with a map $f\taking\bd{1}\ra T_1\times\cdots \times T_n$ that has
$f(0)\poleq f(1)$.  

\begin{prop}
\label{pr:products=gadgets}
The category $\kG$ is a category of gadgets in the sense of
Definition~\ref{de:gadgets}.
\end{prop}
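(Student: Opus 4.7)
Properties (1) and (2) are immediate. For (1), a single necklace $T$ is a one-factor product in $\kG$ with its canonical basepoint map $\bd{1}\to T$ satisfying $\alpha_T\poleq\omega_T$. For (2), $\kG$ is defined as a \emph{full} subcategory of $\bpSet$, so any $\bpSet$-morphism between objects of $\kG$ is automatically in $\kG$.

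The content of the proposition is (3): for $X = T_1\times\cdots\times T_n\in\kG$ with $\alpha\poleq\omega$, we must show $\jC(X)(\alpha,\omega)$ is contractible. Each $T_i$ is an ordered simplicial set (Lemma~\ref{lemma:facts on ordered}(2)), and finite products of ordered simplicial sets are ordered (Lemma~\ref{lemma:facts on ordered}(1)), so $X$ is ordered. By Theorem~\ref{th:main}, it is enough to show the nerve $\jCn(X)(\alpha,\omega)=N(\Nec\ovcat X)_{\alpha,\omega}$ is contractible. The endofunctor on $(\Nec\ovcat X)_{\alpha,\omega}$ sending $[T,f]$ to $[f(T),f(T)\hookrightarrow X]$ is well-defined because the image of a necklace inside an ordered simplicial set is itself a necklace (Lemma~\ref{lemma:facts on ordered}(7)), and the canonical surjection $T\twoheadrightarrow f(T)$ furnishes a natural transformation from the identity to the inclusion of this image. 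So up to Kan equivalence we may pass to the full subcategory $\cP$ of injective objects. Since all structure maps in $\cP$ are injections into $X$, any two parallel morphisms of $\cP$ coincide, and $\cP$ is therefore the poset of sub-necklaces of $X$ with basepoints $\alpha,\omega$, ordered by inclusion.

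The main obstacle is then to show $N\cP$ is contractible. This poset is not filtered in general---for example, in $X=\Delta^1\times\Delta^1$ the two triangular sub-necklaces admit no common upper bound---so there is no terminal object to reduce to. My plan is to build a zig-zag of natural transformations on $\cP$ between the identity and a constant functor. Using the product structure of $X$, each projection $\pi_i\colon X\to T_i$ gives rise to a natural refinement endofunctor on $\cP$ that adds as new joints all vertices of a sub-necklace whose $\pi_i$-image is a joint of $T_i$; these refinements come with natural inclusions into the identity functor. Assembling these coordinatewise refinements into a canonical contracting zig-zag that stabilizes at a universal ``lexicographic spine'' from $\alpha$ to $\omega$ is the technical heart of the proof. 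The base case $X=\Delta^1\times\Delta^1$ can be checked by direct enumeration: one finds $\cP$ has five elements (the diagonal $\Delta^1$, the two L-shaped spines, and the two triangular $\Delta^2$'s) with Hasse diagram a path of length four, so $N\cP$ is a tree and contractibility is manifest.
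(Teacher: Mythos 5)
Your reductions at the start are fine: properties (1) and (2) of Definition~\ref{de:gadgets} are indeed immediate, the passage from $\jC(X)(a,b)$ to $\jCn(X)(a,b)=N(\Nec\ovcat X)_{a,b}$ via Theorem~\ref{th:main} is legitimate, and the retraction onto the poset $\cP$ of injectively embedded sub-necklaces (via the image functor and Lemma~\ref{lemma:facts on ordered}(7)) is a correct and standard nerve argument. But at that point you have only restated the problem: property (3), the contractibility of $\jC(X)(\alpha,\omega)$ for $X$ a product of necklaces, is exactly the substance of the proposition, and your text stops at ``my plan is\ldots'' together with a verification of the single example $\Delta^1\times\Delta^1$. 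What you call the technical heart is never carried out, so the proposition is not proved. For comparison, in the paper this step is Proposition~\ref{prop:product of necklaces}, whose proof occupies most of the appendix: one first handles products of simplices (there a genuine contraction exists, using that any $T\to P$ extends uniquely to $\Delta[T]\to P$ and the constant functor at the edge from $a$ to $b$), and then one runs a double induction over the beads of the factors, gluing along $P_{j+1}=P_j\amalg_A Q$ and invoking the gluing Lemma~\ref{lemma:strongly ordered} for strongly ordered simplicial sets --- a lemma whose proof itself needs a homotopy colimit decomposition of $\jC(B)(u,v)$ indexed by chains of vertices in the seam $A$, with latching-map/Reedy arguments. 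None of this (or any substitute for it) appears in your proposal.

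Beyond being incomplete, the sketched plan looks structurally problematic. Your coordinatewise refinement functors only move \emph{downward} in $\cP$ (they produce sub-necklaces of the given one, with natural inclusions into the identity), so a zig-zag built from them can never connect two sub-necklaces that share no common refinement --- e.g.\ the diagonal edge and an L-shaped spine in $\Delta^1\times\Delta^1$ are both already ``refined'' and are joined in $\cP$ only by passing \emph{upward} through a triangle. Any contraction must therefore also move up through larger sub-necklaces (beads filling prisms), and the union of two sub-necklaces is not a necklace, so there is no evident ``saturation'' functor to use; it is also unclear that a single ``lexicographic spine'' receives or admits maps from the refined objects inside $\cP$. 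This upward-gluing difficulty is precisely what the paper's Lemma~\ref{lemma:strongly ordered} is designed to control, and your proposal offers no mechanism for it. So as written there is a genuine gap: the claim that $N\cP$ is contractible for a general product of necklaces is asserted, illustrated in one case, but not established.
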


For the proof of this one needs to verify that $\jC(T_1\times\cdots
\times T_n)(\alpha,\omega)\he *$.  This is not difficult, but is a bit
of a distraction; we prove it later as Proposition \ref{prop:product of necklaces}.  

\begin{prop}
\label{pr:product}
For any simplicial sets $X$ and $Y$, both $\jC(X\times Y)$ and
$\jC(X)\times \jC(Y)$ are simplicial categories with object set
$X_0\times Y_0$.  For any $a_0,b_0\in X$ and $a_1,b_1\in Y$, the
natural map
\[ \jC(X\times Y)(a_0a_1,b_0b_1) \ra \jC(X)(a_0,b_0)\times
\jC(Y)(a_1,b_1)
\]
induced by $\jC(X\times Y)\ra \jC(X)$ and $\jC(X\times Y)\ra \jC(Y)$
is a Kan equivalence.
Consequently, the map of simplicial categories
\[ \jC(X\times Y) \ra \jC(X)\times \jC(Y)\]
is a weak equivalence in $\sCat$.
\end{prop}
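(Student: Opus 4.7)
My plan is to use the zigzag from Theorem~\ref{th:main} to reduce to the analogous statement for $\jCn$, and then to apply Quillen's Theorem~A to a projection functor between necklace-overcategories. Since Theorem~\ref{th:main} gives a natural zigzag $\jC \bwe \jCh \we \jCn$ of functors $\sSet \to \sCat$ that are Kan equivalences on every mapping space, and since a product of Kan equivalences is again a Kan equivalence, naturality applied to the two projections $X \times Y \to X$ and $X \times Y \to Y$ together with two-out-of-three reduces the problem to showing that the natural map
$$
\jCn(X\times Y)((a_0,a_1),(b_0,b_1)) \lra \jCn(X)(a_0,b_0) \times \jCn(Y)(a_1,b_1)
$$
is a Kan equivalence of simplicial sets.

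By definition this is the nerve of the projection functor
$$
G \colon (\Nec \ovcat X\times Y)_{(a_0,a_1),(b_0,b_1)} \lra (\Nec \ovcat X)_{a_0,b_0} \times (\Nec \ovcat Y)_{a_1,b_1},
$$
which sends $[T \to X\times Y]$ to $([T \to X],[T\to Y])$; I would then apply Quillen's Theorem~A, so it suffices to show that the overcategory $G\ovcat d$ has contractible nerve for every object $d = ([T_1 \to X],[T_2 \to Y])$ of the target. Unpacking, an object of $G \ovcat d$ consists of a necklace $T$ with necklace maps $T \to T_1$ and $T \to T_2$ (each preserving $\alpha$ and $\omega$) whose product $T \to T_1\times T_2$ is compatible with the given maps to $X \times Y$; morphisms are necklace maps over $T_1 \times T_2$ preserving $\alpha$ and $\omega$. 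Hence $G \ovcat d$ is canonically isomorphic to $(\Nec \ovcat T_1 \times T_2)_{(\alpha_1,\alpha_2),(\omega_1,\omega_2)}$, whose nerve is precisely $\jCn(T_1\times T_2)((\alpha_1,\alpha_2),(\omega_1,\omega_2))$.

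Applying Theorem~\ref{th:main} a second time, this nerve is Kan-equivalent to $\jC(T_1 \times T_2)((\alpha_1,\alpha_2),(\omega_1,\omega_2))$, which is contractible---this is precisely the content built into Proposition~\ref{pr:products=gadgets}, which asserts that the category of products of necklaces is a category of gadgets. Quillen's Theorem~A then produces the desired Kan equivalence on mapping spaces. For the final sentence of the proposition, both $\jC(X\times Y)$ and $\jC(X)\times \jC(Y)$ have object set $X_0\times Y_0$ and the natural map is the identity on objects, so the $\pi_0$ essential-surjectivity clause of a DK-equivalence is automatic; combined with the mapping-space equivalences just proved, this gives the DK-equivalence. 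The main obstacle in this plan is the contractibility input supplied by Proposition~\ref{pr:products=gadgets}, whose own proof is deferred in the paper; granting that input, the argument above is quite compact.
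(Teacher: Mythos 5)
Your argument is correct, and it takes a genuinely different route from the paper's. The paper does not apply Quillen's Theorem~A to the projection functor directly; instead it enlarges the indexing category, working with the category of gadgets $\kG$ whose objects are products of necklaces (Proposition~\ref{pr:products=gadgets}), reduces from $\jC$ to $\jC^{\kG}$ via Theorem~\ref{th:main} and Proposition~\ref{pr:gadget}, and then exhibits an explicit homotopy equivalence between $(\kG\ovcat X\times Y)_{a_0a_1,b_0b_1}$ and $(\kG\ovcat X)_{a_0,b_0}\times(\kG\ovcat Y)_{a_1,b_1}$ by means of the functors $\phi$ (project) and $\theta$ (take the product $G\times H$), linked to the identity by natural transformations; this is where closure of $\kG$ under finite products is used. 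You instead reduce to $\jCn$ and apply Theorem~A to the projection functor itself, identifying the comma category over $([T_1\to X],[T_2\to Y])$ with $(\Nec\ovcat T_1\times T_2)_{(\alpha_1,\alpha_2),(\omega_1,\omega_2)}$ (your identification is right: the map to $X\times Y$ is recovered from the pair of maps to $T_1,T_2$, and the compatibility over $X\times Y$ comes for free), and then invoking Theorem~\ref{th:main} a second time together with the contractibility of $\jC(T_1\times T_2)(\alpha,\omega)$. Note that this last input is exactly the appendix result (Proposition~\ref{prop:product of necklaces}, i.e.\ condition~(3) of Definition~\ref{de:gadgets} for products of necklaces), so both proofs ultimately hinge on the same deferred contractibility statement; also, the paper's reduction step (Proposition~\ref{pr:gadget}) is itself proved by Theorem~A, so in effect you have inlined a special case of that argument. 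What your route buys is that it avoids setting up the gadget formalism and the closure-under-products bookkeeping; what the paper's route buys is a reusable comparison (Proposition~\ref{pr:gadget}) and a very short adjunction-style homotopy equivalence at the level of overcategories. The only points worth making explicit in a polished write-up are the trivial verification that $(\alpha_1,\alpha_2)\poleq(\omega_1,\omega_2)$ in $T_1\times T_2$ (so the cited contractibility applies) and the canonical isomorphism $N(C\times D)\iso NC\times ND$ used to view the comparison map as the nerve of your functor $G$; your handling of the object-set clause of the DK-equivalence is fine.
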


\begin{proof}

Let $\kG$ denote the above category of gadgets, in which the objects
are products of necklaces.  By Theorem~\ref{th:main} and
Proposition~\ref{pr:gadget} 
it suffices to prove the result for $\jC^\kG$ in place of $\jC$.

Consider the functors 
\[\Adjoint{\phi}{(\kG\ovcat X\cross
Y)_{a_0a_1,b_0b_1}}{(\kG\ovcat X)_{a_0,b_0}\cross(\kG\ovcat
Y)_{a_1,b_1}}{\theta}
\]
given by
\[ \phi\colon [G,G\ra X\times
Y] \mapsto \bigl ([G,G\ra X\times Y \ra X],[G,G\ra X\times Y \ra
Y] \bigr )
\]
\and
\[ \theta\colon \bigl ([G,G\ra X],[H,H\ra Y] \bigr ) \mapsto [G\times H,G\times
H\ra X\times Y].
\]
Note that we are using that the subcategory $\kG$ is closed under
finite products.

It is very easy to see that there is a natural transformation $\id \ra
\theta\phi$, obtained by using diagonal maps, and a natural transformation $\phi\theta\ra \id$, obtained by using projections.  As a consequence, the maps $\theta$
and $\phi$ induce inverse homotopy equivalences on the nerves.  This
completes the proof.

\end{proof}

Let $E\taking\Set\to\sSet$ denote the 0-coskeleton functor (see
\cite{AM}).  For any
simplicial set $X$ and set $S$, we have $\sSet(X,ES)=\Set(X_0,S)$.  In
particular, if $n\in\N$ we denote $E^n=E\{0,1,\ldots,n\}$.

\begin{lemma}
\label{le:C(En)}
For any $n\geq 0$, the simplicial category $\jC(E^n)$ is contractible
in $\sCat$---that is to say, all the mapping spaces in $\jC(E^n)$ are
contractible.  
\end{lemma}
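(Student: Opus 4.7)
The plan is to invoke Theorem~\ref{th:main} to replace $\jC(E^n)$ by the equivalent homotopical model $\jCn(E^n)$, reducing the problem to showing that the simplicial set $\jCn(E^n)(a,b) = N(\Nec\ovcat E^n)_{a,b}$ is contractible for every $a,b \in \{0,\ldots,n\}$.

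Before attacking contractibility, I would unpack this overcategory. Because $E^n$ is the $0$-coskeleton of the $(n+1)$-element set $\{0,\ldots,n\}$, a map of simplicial sets $X \to E^n$ is the same data as an arbitrary function $X_0 \to \{0,\ldots,n\}$. Hence objects of $(\Nec\ovcat E^n)_{a,b}$ are pairs $(T,g)$ where $T$ is a necklace and $g\colon T_0 \to \{0,\ldots,n\}$ is any function with $g(\alpha_T) = a$ and $g(\omega_T) = b$, and morphisms are maps of necklaces preserving endpoints that are compatible with the labellings. In particular, there is no order-preservation constraint on $g$, which distinguishes this situation from $(\Nec\ovcat\Delta^n)_{a,b}$.

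The main step is to exhibit a contracting zig-zag of natural transformations on this overcategory. A natural first piece is the endofunctor $\phi(T,g) = (T \vee \Delta^1, g')$, where the appended bead is a degenerate edge at $b$ and $g'$ restricts to $g$ on $T$; collapsing the new bead to $\omega_T$ produces a natural transformation $\phi \to \id$ over $E^n$. To finish the contraction I would construct a further zig-zag linking $\phi$ with the constant functor valued at $(\Delta^1,e_{ab})$, where $e_{ab}$ is the edge from $a$ to $b$ in $E^n$. The crucial leverage is the total freedom of labellings into $E^n$: we can reassign values of $g$ on interior vertices without violating any simplicial-set compatibility, a feature absent for maps into general simplicial sets, which should allow us to thread an honest zig-zag of natural transformations among $\id$, $\phi$, and the constant functor.

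The hard part will be that last step. Since endpoints are fixed, a generic $(T,g)$ does not admit a direct morphism to $(\Delta^1,e_{ab})$---such a map would force all interior labels of $T$ to lie in $\{a,b\}$, which is not typical. Thus the zig-zag must pass through intermediate augmented necklaces whose interior relabelings exploit $E^n$'s unrestricted edges. An alternative worth trying is induction on $n$: the base case $n=0$ is trivial since $E^0 = \Delta^0$, and one could hope to combine Proposition~\ref{pr:product} with a retraction of $E^n$ onto $E^{n-1}$, though some care is needed because being a retract of a weakly contractible space does not automatically produce weak contractibility without additional fibrancy information.
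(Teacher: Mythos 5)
Your reduction is the same as the paper's: invoke Theorem~\ref{th:main} to replace $\jC(E^n)$ by $\jCn(E^n)$, and then show that the nerve of $(\Nec\ovcat E^n)_{a,b}$ is contractible, using the fact that a map into the $0$-coskeleton $E^n$ is just an arbitrary labelling of vertices. Up to that point everything you say is correct.

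The gap is exactly where you admit it is: the contraction itself is never constructed. Your endofunctor $\phi$ (wedging on a degenerate edge at $b$) comes with a natural transformation to the identity, but that buys nothing --- it only shows $N\phi$ is homotopic to the identity, and there is no visible zig-zag from $\phi$ to the constant functor at $[\Delta^1,e_{ab}]$; ``relabelling interior vertices'' cannot be implemented by morphisms in the overcategory, since a morphism is a fixed map of necklaces over $E^n$, so you cannot change the labelling $g$ without also changing the necklace in a way you have not specified. The missing device, which is the heart of the paper's proof, is the functor $f\colon [T,\,x\colon T\to E^n]\mapsto[\Delta[T],\,\bar x]$, where $\Delta[T]$ is the simplex on the (ordered) vertex set of $T$: because maps into $E^n$ are determined by their values on vertices and $T\inc\Delta[T]$ is an isomorphism on $0$-skeleta, $x$ extends uniquely to $\bar x$, making $f$ a well-defined endofunctor of $(\Nec\ovcat E^n)_{a,b}$. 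The inclusions $T\inc\Delta[T]$ give a natural transformation $\id\Rightarrow f$, and the edge $\Delta^1\inc\Delta[T]$ joining $\alpha$ to $\omega$ gives a natural transformation $g\Rightarrow f$ from the constant functor $g$ at $[\Delta^1,e_{ab}]$ (the triangles over $E^n$ commute automatically, again because maps to $E^n$ are determined on vertices). Taking nerves, the identity is null homotopic and the overcategory is contractible. Your alternative sketch (induction on $n$ via Proposition~\ref{pr:product} and a retraction $E^n\to E^{n-1}$) also does not go through as stated, for the reason you yourself flag: a retract argument needs more than weak contractibility of the bigger object without the fattening trick or some fibrancy input. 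So the proposal correctly sets up the problem but is missing the key idea that completes it.
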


\begin{proof}
By Theorem~\ref{th:main} it is sufficient to prove that the mapping
space $\jCn(E^n)(i,j)$ is contractible, for every $i,j\in
\{0,1,\ldots,n\}$.  This  mapping space is the nerve of the
overcategory $(\Nec\ovcat E^n)_{i,j}$.  

Observe that if $T$ is a necklace then any map $T\ra E^n$ extends
uniquely over $\Delta[T]$.  This is because maps into $E^n$ are
determined by what they do on the $0$-skeleton, and $T\inc \Delta[T]$ is an
isomorphism on $0$-skeleta.  

Consider two functors
\[ f,g\colon (\Nec\ovcat E^n)_{i,j} \ra (\Nec\ovcat E^n)_{i,j} \]
given by
\[ f\colon [T,T\llra{x} E^n] \mapsto
[\Delta[T],\Delta[T]\llra{\bar{x}} E^n]
\ \text{and}\ 
g\colon [T,T\llra{x} E^n] \mapsto [\Delta^1,\Delta^1\llra{z} E^n].
\]
Here $\bar{x}$ is the unique extension of $x$ to $\Delta[T]$, and $z$
is the unique $1$-simplex of $E^n$ connecting $i$ to $j$.  Observe
that $g$ is a constant functor.

It is easy to see that there are natural transformations $\id \ra f\la
g$.  The functor $g$  factors through the terminal category $\{*\}$, so after taking nerves the identity map is null homotopic.  
Hence $(\Nec\ovcat E^n)_{i,j}$ is contractible.

\end{proof}

For completeness (and because it is short) we include the following
lemma, established in \cite[Proof of 2.2.5.1]{L}:

\begin{lemma}\label{lemma:jC preserves cofs}

The functor $\jC\taking\sSet\to\sCat$ takes monomorphism to  cofibrations.

\end{lemma}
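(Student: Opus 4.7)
The plan is to reduce the statement to checking a small set of generating monomorphisms, using that $\jC$ is a left adjoint. The monomorphisms in $\sSet$ form the weakly saturated class (closure under pushouts, transfinite compositions, and retracts) generated by the boundary inclusions $\{\partial\Delta^n \inc \Delta^n\}_{n\geq 0}$; since $\jC$ is left adjoint to the coherent nerve $N$ it preserves all colimits, and since Bergner's model structure on $\sCat$ is cofibrantly generated its cofibrations are themselves weakly saturated. Hence it suffices to prove that $\jC(\partial\Delta^n)\ra\jC(\Delta^n)$ is a cofibration for every $n\geq 0$. The case $n=0$ is immediate: $\jC(\emptyset)=\emptyset$, $\jC(\Delta^0)=*$, and $\emptyset\ra *$ is one of Bergner's generating cofibrations.

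For $n\geq 1$, the plan is to exhibit $\jC(\partial\Delta^n)\ra\jC(\Delta^n)$ as a pushout of a cofibration of the form $F(A)\ra F(B)$. Here $F\colon \sSet\ra\sCat$ denotes the left adjoint sending a simplicial set $K$ to the simplicial category on two objects $x,y$ whose only non-identity mapping space is $F(K)(x,y)=K$. Since $F$ is itself a left adjoint and carries each generating boundary inclusion $\partial\Delta^m \ra \Delta^m$ to a generating cofibration of $\sCat$, a second application of the saturation argument shows that $F$ sends any monomorphism of simplicial sets to a cofibration in $\sCat$. Setting $A = \jC(\partial\Delta^n)(0,n)$ and $B = \jC(\Delta^n)(0,n)$ (the latter being $(\Delta^1)^{n-1}$ by Lemma~\ref{le:C(Delta^n)}), with the natural inclusion $A\inc B$, the concrete claim is that
\[
\jC(\partial\Delta^n)\amalg_{F(A)} F(B) \iso \jC(\Delta^n).
\]
Granting this, $\jC(\partial\Delta^n)\ra\jC(\Delta^n)$ is a pushout of the cofibration $F(A)\ra F(B)$, hence itself a cofibration, completing the argument.

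The main obstacle is verifying the pushout isomorphism above. Using Lemma~\ref{le:C(Delta^n)} for the target together with the description of $\jC(\partial\Delta^n)$ via ordered simplicial sets (Lemma~\ref{le:jCL-necklace}) for the source, the two key observations are: (i) for $(i,j)\neq (0,n)$ the inclusion $\jC(\partial\Delta^n)(i,j)\ra\jC(\Delta^n)(i,j)$ is already an isomorphism, because any totally nondegenerate necklace $T\inc\Delta^n$ with endpoints $i,j$ has image contained in the proper face on vertices $\{i,\ldots,j\}$ and hence factors through $\partial\Delta^n$; and (ii) no additional morphisms arise from composition in the pushout, because $\jC(\partial\Delta^n)(i,0)=\emptyset$ for $i>0$ and $\jC(\partial\Delta^n)(n,j)=\emptyset$ for $j<n$, so the new morphisms $0\ra n$ contributed by $B\smallsetminus A$ are not composable with anything except identities. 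Together these observations identify the pushout with $\jC(\Delta^n)$, and the proof is done.
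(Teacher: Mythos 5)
Your proposal is correct and follows essentially the same route as the paper: reduce to the boundary inclusions $\partial\Delta^n\inc\Delta^n$ via saturation, note that the mapping spaces already agree except at $(0,n)$, and present $\jC(\partial\Delta^n)\to\jC(\Delta^n)$ as a pushout of the two-object functor (your $F$, the paper's $U$) applied to the inclusion $\jC(\partial\Delta^n)(0,n)\inc\jC(\Delta^n)(0,n)$, using that this functor preserves cofibrations. The only cosmetic differences are that the paper identifies the $(0,n)$ map explicitly as the boundary inclusion of the cube $(\Delta^1)^{n-1}$, while you only use that it is a monomorphism, and that you spell out the verification of the pushout claim that the paper leaves as ``notice that''.
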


\begin{proof}
Every cofibration in $\sSet$ is obtained by compositions and cobase
changes from boundary inclusions of simplices.  It therefore suffices
to show that for each $n\geq 0$ the map
$f\taking\jC(\bd{n})\to\jC(\Delta^n)$ is a cofibration in $\sCat$.
Let $0\leq i,j\leq n$.  If $i>0$ or $j<n$ then
$(\Nec\ovcat\Delta^n)_{i,j}\iso(\Nec\ovcat\partial\Delta^n)_{i,j}$,
whereby $$f(i,j)\taking\jC(\bd{n})(i,j)\to\jC(\Delta^n)(i,j)$$ is an
isomorphism by Proposition~\ref{pr:jC-main}.  For the remaining case
$i=0,j=n$, the map $f(0,n)$ is the inclusion of the boundary of a cube
$b\taking\partial((\Delta^1)^{n-1})\to(\Delta^1)^{n-1}$.

Let $U\taking\sSet\to\sCat$ denote the functor which sends a
simplicial set $S$ to the unique simplicial category $U(S)$ with two
objects $x,y$ and morphisms $\Hom(x,x)=\Hom(y,y)=\{*\}$,
$\Hom(y,x)=\emptyset$, and $\Hom(x,y)=S$.  In view of the generating
cofibrations for $\sCat$ (see \cite{Bergner}), it is easy to show that
$U$ preserves cofibrations.  Hence $U(b)$ is a cofibration.  Notice
that $f$ is the pushout of $U(b)$ along the obvious map
$U[\partial((\Delta^1)^{n-1})]\to\jC(\bd{n})$ sending $x\mapsto 0$ and
$y\mapsto n$. Thus, $f$ is a cofibration.

\end{proof}

\begin{lemma}\label{lemma:N preserves fibrants}
If $\cD$ is a simplicial category then $N\cD$ is a quasi-category.
\end{lemma}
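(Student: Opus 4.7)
The plan is to use the adjunction $\jC \dashv N$: $N\cD$ is a quasi-category iff every inner horn $\Lambda^n_i \to N\cD$ extends along $\Lambda^n_i \hookrightarrow \Delta^n$, which transposes to saying every map $\jC(\Lambda^n_i) \to \cD$ extends along $\jC(\Lambda^n_i) \hookrightarrow \jC(\Delta^n)$. Since $\cD$ is fibrant---its mapping spaces are Kan complexes, so $\cD \to \ast$ is a fibration in Bergner's model structure on $\sCat$---it suffices to show that $\jC(\Lambda^n_i) \hookrightarrow \jC(\Delta^n)$ is a trivial cofibration for each $0 < i < n$; the required extension then follows from the standard lifting property.

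The cofibration half is immediate from Lemma~\ref{lemma:jC preserves cofs}. For the DK-equivalence half, the $\pi_0$-condition is automatic as both categories have object set $\{0, \ldots, n\}$, and the real work lies in showing the induced maps on mapping spaces are Kan equivalences. When $(a, b) \neq (0, n)$, the interval $[a, b]$ misses some vertex $j \neq i$; every subsimplex of $\Delta^n$ with vertex set in $[a, b]$ then lies in $d_j \subseteq \Lambda^n_i$, so every necklace from $a$ to $b$ in $\Delta^n$ already factors through $\Lambda^n_i$ and the map on the $(a,b)$-mapping space is an isomorphism. The case $(a, b) = (0, n)$ is the main obstacle: $\jC(\Delta^n)(0, n) \iso (\Delta^1)^{n-1}$ is a contractible cube, and one must establish that $\jC(\Lambda^n_i)(0, n)$ is also contractible.

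For the contractibility I would invoke Theorem~\ref{th:main} to replace $\jC$ with $\jCn$: it suffices to show $\jCn(\Lambda^n_i)(0, n) = N(\Nec \ovcat \Lambda^n_i)_{0, n}$ is contractible. Since the nerve $N(\Nec \ovcat \Delta^n)_{0, n}$ is known to be contractible, Quillen's Theorem~A reduces the claim to verifying, for each object $[T, T \to \Delta^n]$, that the comma category consisting of necklaces $T' \to T$ whose composite with $T \to \Delta^n$ lands in $\Lambda^n_i$ is contractible. This comma-category analysis is the hardest step; one expects to argue by collapsing beads of $T$ whose images miss $\Lambda^n_i$ to extract a cofinal subcategory with an initial or final object. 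Once the trivial cofibration is established, lifting against the fibration $\cD \to \ast$ produces the desired extension, completing the proof.
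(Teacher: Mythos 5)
Your reduction is exactly the paper's: use the adjunction to reduce to showing $\jC(\Lambda^n_i)\to\jC(\Delta^n)$ is an acyclic cofibration, get the cofibration from Lemma~\ref{lemma:jC preserves cofs}, observe that the map on mapping spaces is an isomorphism except at $(0,n)$ (your argument that every necklace from $a$ to $b$ lands in a face $d_j\subseteq\Lambda^n_i$ is correct, via Proposition~\ref{pr:jC-main}), and note that the lifting step uses fibrancy of $\cD$, i.e.\ Kan mapping spaces (a hypothesis the paper leaves implicit). The one remaining point, contractibility of $\jC(\Lambda^n_i)(0,n)$, is where your proposal has a genuine gap. The Quillen's Theorem~A reduction you suggest does not make progress: for the object $[\Delta^n,\id]$ of $(\Nec\ovcat\Delta^n)_{0,n}$, each object of $(\Nec\ovcat\Lambda^n_i)_{0,n}$ admits a unique map to it, so the relevant comma category is isomorphic to $(\Nec\ovcat\Lambda^n_i)_{0,n}$ itself---verifying its contractibility \emph{is} the original problem. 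The subsequent suggestion (``collapse beads whose images miss $\Lambda^n_i$'' to find a cofinal subcategory with an initial or final object) is only a hope; no such terminal or initial object exists in the key comma category, since there is no largest necklace inside $\Lambda^n_i$ from $0$ to $n$, and you give no argument that a cofinal subcategory of the required kind exists. So the crucial step is left unproved.

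The paper avoids this entirely by computing $\jC(\Lambda^n_i)(0,n)$ directly rather than passing to $\jCn$: since $\Lambda^n_i$ is an ordered simplicial set, Lemma~\ref{le:jCL-necklace} (or Corollary~\ref{cor:maps in ordered}) says the simplices of $\jC(\Lambda^n_i)(0,n)$ are given by flanked flags on necklaces that \emph{inject} into $\Lambda^n_i$, and an enumeration as in Example~\ref{ex:computation} identifies this mapping space with the boundary of the cube $(\Delta^1)^{n-1}$ with one (open) face removed, which is visibly contractible, hence Kan equivalent to $\jC(\Delta^n)(0,n)\iso(\Delta^1)^{n-1}$. If you want to salvage your route through $\jCn$ and Theorem~\ref{th:main}, you would still need an honest contractibility argument for $N(\Nec\ovcat\Lambda^n_i)_{0,n}$, and the natural-transformation tricks used for $\Delta^n$ and $E^n$ (extending over $\Delta[T]$) are unavailable because $\Delta[T]$ need not map into $\Lambda^n_i$; the explicit finite computation is the shorter path.
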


\begin{proof}
By adjointness it
suffices to show that each $\jC(j^{n,k})$ is an acyclic cofibration in
$s\Cat$, where $j^{n,k}\colon \Lambda^n_k \inc \Delta^n$ is an inner
horn inclusion ($0<k<n$).  It is a cofibration by Lemma~\ref{lemma:jC
preserves cofs}, so we must only verify that it is a weak equivalence.
Just as in the proof of (\ref{lemma:jC preserves cofs}) above,
$\jC(\Lambda^n_k)(i,j) \ra \jC(\Delta^n)(i,j)$ is an isomorphism
unless $i=0$ and $j=n$.  It only remains to show that
$\jC(\Lambda^n_k)(0,n) \ra
\jC(\Delta^n)(0,n)$ is a Kan equivalence..  An analysis as in
Example~\ref{ex:computation} identifies $\jC(\Lambda^n_k)(0,n)$ with
the result of removing one face from
the boundary of $(\Delta^1)^{n-1}$, which clearly has the same
homotopy type as the cube $(\Delta^1)^{n-1}$.
\end{proof}

\begin{prop}
\label{pr:hoinvariance}
If  $S\ra S'$ is a map of simplicial sets which is a Joyal equivalence then $\jC(S)\ra
\jC(S')$ is a weak equivalence of simplicial categories.
\end{prop}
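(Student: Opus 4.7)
The plan is: (1) use the contractibility of $\jC(E^1)$ to show that $\jC$ sends $E^1$-homotopic maps to maps that agree in $\Ho(\sCat)$; (2) deduce the proposition for Joyal equivalences between quasi-categories; (3) reduce the general case via a small-object-argument fibrant replacement.

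For step (1), Lemma~\ref{le:C(En)} with $n=1$ gives that $\jC(E^1)$ is DK-equivalent to the terminal simplicial category. Combined with Proposition~\ref{pr:product}, the projection $\pi\colon X\times E^1\to X$ becomes a DK-equivalence after applying $\jC$, for every $X$. Since both inclusions $\iota_0,\iota_1\colon X\to X\times E^1$ are sections of $\pi$, they induce the same morphism $[\jC(\pi)]^{-1}$ in $\Ho(\sCat)$. Post-composing with $\jC(h)$ for any $E^1$-homotopy $h\colon X\times E^1\to Y$ between $f_0$ and $f_1$ then yields $[\jC(f_0)]=[\jC(f_1)]$ in $\Ho(\sCat)$. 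Step (2) is immediate: if $f\colon S\to S'$ is a Joyal equivalence of quasi-categories, taking $Z=S$ in the definition produces $g\colon S'\to S$ with $gf\sim_{E^1}\id_S$; applying injectivity of $f^*\colon [S',S']_{E^1}\to [S,S']_{E^1}$ to the classes $[fg]$ and $[\id_{S'}]$ (which agree in the target because $fgf\sim_{E^1}f$) gives $fg\sim_{E^1}\id_{S'}$. By step (1), $[\jC(g)]$ is inverse to $[\jC(f)]$ in $\Ho(\sCat)$, so $\jC(f)$ is a DK-equivalence.

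For step (3), build $R\colon\sSet\to\sSet$ by applying the small object argument to the set of inner horn inclusions $\{\Lambda^n_k\inc\Delta^n:0<k<n\}$, so that $R(S)$ is a quasi-category and $S\to R(S)$ is a transfinite composition of pushouts along such inclusions. Lemma~\ref{lemma:N preserves fibrants} combined with Lemma~\ref{lemma:jC preserves cofs} shows each $\jC(\Lambda^n_k)\to\jC(\Delta^n)$ is an acyclic cofibration in $\sCat$; since $\jC$ preserves colimits and acyclic cofibrations in $\sCat$ are closed under pushouts and transfinite compositions, $\jC(S)\to\jC(R(S))$ is always a DK-equivalence. In parallel, inner horn inclusions are Joyal equivalences (a standard fact), and Joyal equivalences are closed under the same colimit operations, so $S\to R(S)$ is always a Joyal equivalence. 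Given the Joyal equivalence $f\colon S\to S'$, functoriality yields a commutative square, and 2-out-of-3 for Joyal equivalences forces $R(f)\colon R(S)\to R(S')$ to be a Joyal equivalence between quasi-categories. Step (2) then gives that $\jC(R(f))$ is a DK-equivalence, and 2-out-of-3 in $\sCat$ concludes.

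The main obstacle is the tacit claim that inner horn inclusions are Joyal equivalences. Verifying this directly from the definition requires showing that any two extensions $\Delta^n\to Z$ of a common map $\Lambda^n_k\to Z$ with $Z$ a quasi-category are $E^1$-homotopic, which is standard but technical and amounts to an inner-anodyne extension argument for the inclusion $\Lambda^n_k\times E^1\cup\Delta^n\times\partial\Delta^1\inc\Delta^n\times E^1$.
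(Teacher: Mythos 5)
Your steps (1) and (2) are correct: the cylinder-style argument via Lemma~\ref{le:C(En)} and Proposition~\ref{pr:product}, and the extraction of an $E^1$-homotopy inverse from the definition of Joyal equivalence between quasi-categories, both work. The gap is in step (3). First, the assertion that ``Joyal equivalences are closed under the same colimit operations'' is false as stated: a class of weak equivalences is not closed under pushout, and nothing in the definition via $[-,Z]_{E^1}$ gives such closure. What you actually need is that inner anodyne maps (pushouts and transfinite composites of inner horn inclusions, such as $S\ra R(S)$) are Joyal equivalences. Second, even the base case that $\Lambda^n_k\inc\Delta^n$ is a Joyal equivalence is not available in this paper, and proving it from the definition is not a routine verification: surjectivity of $[\Delta^n,Z]_{E^1}\ra[\Lambda^n_k,Z]_{E^1}$ is easy since $Z$ is a quasi-category, but injectivity---and the same issue for a general inner anodyne $A\inc B$---requires extending a map $(B\times\{0,1\})\cup_{A\times\{0,1\}}(A\times E^1)\ra Z$ over $B\times E^1$, i.e.\ Joyal's pushout-product theorem for inner anodynes (equivalently, the Joyal model structure together with the fact that inner anodynes are its acyclic cofibrations). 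That is exactly the external quasi-category combinatorics this paper is structured to avoid, so as written the reduction step is a genuine hole. (A minor point: that $\jC(\Lambda^n_k)\ra\jC(\Delta^n)$ is an acyclic cofibration is established in the \emph{proof} of Lemma~\ref{lemma:N preserves fibrants}, not by combining its statement with Lemma~\ref{lemma:jC preserves cofs}.)

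The paper's own proof sidesteps all of this and needs no fibrant replacement in $\sSet$: using Lemma~\ref{le:C(En)}, Lemma~\ref{lemma:jC preserves cofs} and Proposition~\ref{pr:product} one shows that $\jC(X\times E^1)$ is a cylinder object for $\jC(X)$ in $\sCat$, and the adjunction $(\jC,N)$ then identifies $[\jC(X),\cD]$ with $[X,N\cD]_{E^1}$ for every fibrant simplicial category $\cD$. Since $N\cD$ is a quasi-category by Lemma~\ref{lemma:N preserves fibrants}, a Joyal equivalence $S\ra S'$ induces bijections $[\jC(S'),\cD]\ra[\jC(S),\cD]$ for all fibrant $\cD$, and a map between cofibrant objects with this property is a weak equivalence. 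To salvage your outline you would either have to import Joyal's result on inner anodynes as an external input, or switch to this adjunction argument, which makes the reduction to quasi-categories unnecessary.
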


\begin{proof}

For any simplicial set $X$, the map $\jC(X\times E^n)\ra \jC(X)$
induced by projection is a weak equivalence in $\sCat$.  This follows
by combining Proposition~\ref{pr:product} with Lemma~\ref{le:C(En)}:
\[ \jC(X\times E^n) \llra{\sim} \jC(X)\times \jC(E^n) \llra{\sim}
\jC(X).
\]
Since $X\amalg X \inc X\times E^1$ is a cofibration in $\sSet$, 
 $\jC(X)\amalg \jC(X)=\jC(X\amalg X) \ra \jC(X\times
E^1)$ is a cofibration in $\sCat$, by Lemma \ref{lemma:jC preserves cofs}.  It follows that $\jC(X\times E^1)$
is a cylinder object for $\jC(X)$ in $\sCat$.  So if $\cD$ is a
fibrant simplicial category we may compute
homotopy classes of maps $[\jC(X),\cD]$ as the coequalizer
\[ \coeq\Bigl ( \sCat(\jC(X\times E^1),\cD) \dbra
\sCat(\jC(X),\cD)\Bigr ).
\]
But using the adjunction, this is isomorphic to
\[ \coeq\Bigl ( \sSet(X\times E^1,N\cD) \dbra \sSet(X,N\cD) \Bigr
).
\]
The above coequalizer is
$[X,N\cD]_{E^1}$, and we have identified 
\begin{myequation}
\label{eq:homotopy adjunction} [\jC(X),\cD]\iso [X,N\cD]_{E^1}.
\end{myequation}

Now let $S\ra S'$ be a Joyal equivalence.   Then $\jC(S)\ra
\jC(S')$ is a map between cofibrant objects of $\sCat$.  To prove that
it is a weak equivalence in $\sCat$ it is sufficient to prove that the
induced map on homotopy classes
\[ [\jC(S'),\cD]\ra [\jC(S),\cD]
\]
is a bijection, for every fibrant object $\cD\in \sCat$.  Since $N\cD$
is a quasi-category by Lemma~\ref{lemma:N preserves fibrants} 
and $S\ra S'$ is a Joyal equivalence, we have 
that $[S',N\cD]_{E^1}\to[S,N\cD]_{E^1}$ is a bijection;  the result then follows
by (\ref{eq:homotopy adjunction}).
\end{proof}

\begin{remark}
In fact it turns out that a map of simplicial sets $S\ra S'$ is a
Joyal equivalence {\it if and only if\/} $\jC(S)\ra \jC(S')$ is a weak
equivalence of simplicial categories.  This was proven in \cite{L}, and
will be reproven in \cite{DS} using an extension of the methods from
the present paper.  
\end{remark}



\appendix

\section{Leftover proofs}
\label{se:appendix}
In this section we give two proofs which were postponed in the body
of the paper.  

\subsection{Products of necklaces}

Our first goal is to prove Proposition~\ref{pr:products=gadgets}.  Let
$T_1,\ldots,T_n$ be necklaces, and consider the product
$X=T_1\cross\cdots\cross T_n$.  The main thing we need to prove is
that whenever $a\poleq_X b$ in $X$ the mapping space
$\jC(X)(a,b)\simeq *$ is contractible.

\begin{defn}

An ordered simplicial set $(X,\poleq)$ is called \dfn{strongly
ordered} if, for all $a\poleq b$ in $X$, the mapping space
$\jC(X)(a,b)$ is contractible.

\end{defn}

Note that in any ordered simplicial set $X$ with $a,b\in X_0$, we have
$a\poleq b$ if and only if $\jC(X)(a,b)\neq\emptyset$.  Thus if $X$ is
strongly ordered then its structure as a simplicial category, up
to weak equivalence, is completely determined by the ordering on
its vertices.  We also point out that every necklace $T\in\Nec$ is
strongly ordered by Corollary~\ref{cor:mapping space is cube}.

\begin{lemma}\label{lemma:strongly ordered}

Suppose given a diagram $$X\From{f} A\To{g} Y$$ where $X,Y,$ and $A$
are strongly ordered simplicial sets and both $f$ and $g$ are simple
inclusions. 
Let $B=X\amalg_A Y$ and assume 
the following conditions hold:
\begin{enumerate}[(1)]
\item $A$ has finitely many vertices;
\item  Given any $x\in X$, the set $A_{x\poleq}=\{a\in A \,|\, x\poleq_B a\}$ has
an initial element (an element which is smaller than every other
element).
\item For any $y\in Y$ and $a\in A$, if $y\poleq_Y a$ then $y\in A$.
\end{enumerate}
Then $B$
is strongly ordered.
\end{lemma}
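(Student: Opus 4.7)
My plan proceeds in two stages. First, that $B$ is ordered and that both $X\inc B$ and $Y\inc B$ are simple inclusions is immediate from Lemma~\ref{lemma:facts on ordered}(8); composing $A\inc X\inc B$ then shows that $A\inc B$ is also simple. Fixing $u\poleq_B v$ in $B$, by Theorem~\ref{th:main} it suffices to prove that $N(\Nec\ovcat B)_{u,v}$ is contractible.

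The key combinatorial step is a structural lemma on necklaces in $B$: using condition (3), any totally nondegenerate $T\to B$ decomposes uniquely as $T=T_X\Wedge T_Y$ with $T_X$ factoring through $X$ and $T_Y$ factoring through $Y\setminus A$. Indeed, a bead landing in $Y\setminus A$ followed by an $X$-bead would force a joint in $A$ above which lies a $Y\setminus A$-vertex, contradicting (3). Write $j_T$ for the transition joint; generically $j_T\in A$.

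With this in hand the proof splits by the position of $u$ and $v$. If $u,v\in X$, then $(\Nec\ovcat B)_{u,v}=(\Nec\ovcat X)_{u,v}$ by the simple inclusion $X\inc B$, and so has contractible nerve by the strong ordering of $X$; the case $u,v\in Y$ is symmetric. The structural lemma rules out $u\in Y\setminus A$, $v\in X\setminus A$. This leaves the substantive case $u\in X\setminus A$, $v\in Y\setminus A$, where $A_{u\poleq}$ is nonempty (every path crosses $A$) and so by (2) has an initial element $\alpha_0\in A$. The simple inclusions then give $u\poleq_X\alpha_0\poleq_Y v$.

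For this last case the plan is to show that the gluing functor
\[
\Psi\colon(\Nec\ovcat X)_{u,\alpha_0}\times(\Nec\ovcat Y)_{\alpha_0,v}\lra(\Nec\ovcat B)_{u,v},\qquad(S,U)\mapsto S\Wedge U,
\]
induces a Kan equivalence on nerves; its domain has contractible nerve as a product of two strongly ordered overcategories, so this finishes the proof. The intended tool is Quillen's Theorem A, applied to the comma categories $\Psi\ovcat[T,T\to B_{u,v}]$. The main obstacle is the subcase when $\alpha_0$ does not appear as a vertex in $T$. Here I would use that $\alpha_0\poleq_A j_T$ (from the simple inclusion $A\inc B$ together with strong ordering of $A$), as well as condition~(1), to canonically splice a necklace in $A$ from $\alpha_0$ to $j_T$ into $T$, reducing to the case $\alpha_0\in T$. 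Making this splicing precise and verifying that the resulting comma categories are contractible is the delicate step where all three hypotheses interact.
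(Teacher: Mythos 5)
Your opening reductions (that $B$ is ordered with $X\inc B$, $Y\inc B$ simple, the cases $u,v\in X$ and $u,v\in Y$, and the impossibility of $u\in Y\setminus A$, $v\in X\setminus A$) are correct and agree with the paper. The gap is in the remaining case $u\in X\setminus A$, $v\in Y\setminus A$: the plan to prove that the gluing functor $\Psi$ is a nerve equivalence via Quillen's Theorem A cannot work, because the relevant comma categories can be \emph{empty}, and no splicing can repair emptiness. Every object in the image of $\Psi$ is a necklace $S\Wedge U\to B$ whose vertices land either in $\{x\in X \,:\, x\poleq_X\alpha_0\}$ (the $S$-part) or in $Y$ (the $U$-part), and whose image contains $\alpha_0$. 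So if some $T\to B_{u,v}$ has a vertex over a point $x\in X\setminus A$ with $x\not\poleq\alpha_0$, there is no map $T\to S\Wedge U$ over $B$ (no vertex of $S\Wedge U$ lies over $x$); and if in addition $\alpha_0$ is not in the image of $T$, there is no map $S\Wedge U\to T$ over $B$ either (the wedge point must land over $\alpha_0$). A concrete instance satisfying all hypotheses of the lemma: let $X=\Delta^1\cross\Delta^1$, let $A=\Delta^1$ be the top edge $(0,1)\to(1,1)$, and let $Y=\Delta^2$ glued to $X$ by identifying its edge $01$ with $A$; take $u=(0,0)$ and $v=2\in Y\setminus A$. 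All of (1)--(3) hold, with $\alpha_0=(0,1)$ the initial element of $A_{u\poleq}$. For the spine $T\colon (0,0)\to(1,0)\to(1,1)\to 2$, the vertex $(1,0)$ satisfies $(1,0)\not\poleq\alpha_0$ and $(1,0)\notin Y$, and $\alpha_0$ is not in the image of $T$; hence both $\Psi\ovcat[T]$ and $[T]\ovcat\Psi$ are empty. Your proposed fix---splicing an $A$-necklace from $\alpha_0$ to $j_T$---does not produce an object of either comma category, since $T$ itself is fixed and there is no directed path from $j_T$ back to $\alpha_0$ through which $T$ could be rerouted. (A smaller imprecision: in your structural lemma the $Y$-part does not factor through $Y\setminus A$, since the transition joint lies in $A$; but that is not the essential problem.)

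The underlying issue is that different vertices of $X$ lying on paths from $u$ to $v$ have different ``gateways'' into $A$: condition (2) gives an initial element of $A_{x\poleq}$ for each $x$, but these need not all equal $\alpha_0$, so necklaces from $u$ to $v$ need not pass anywhere near $\alpha_0$. This is why the paper does not glue at a single vertex. Instead it indexes over the poset $P_0$ of nonempty linearly ordered subsets $S\subseteq A_0$ with $u\poleq a\poleq v$ for $a\in S$, writes $\jC(B)(u,v)$ as the colimit over $P_0^{op}$ of pieces $M_S(u,v)$ built from injective necklaces whose joints contain $S$, shows each $M_S(u,v)$ is contractible (it splits as $\jC(X)(u,a_1)\times\jC(A)(a_1,a_2)\times\cdots\times\jC(Y)(a_n,v)$, using that $X$, $A$, $Y$ are strongly ordered), shows the colimit is a homotopy colimit by a Reedy-type latching argument---this is where hypothesis (1) enters, via the degree function $|A_0-S|$---and finally shows $N(P_0)$ is contractible, which is the only place $\alpha_0$ and hypothesis (2) are used, via the functor $S\mapsto S\cup\{\alpha_0\}$. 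To salvage your approach you would have to replace gluing at $\alpha_0$ with bookkeeping of all possible crossings of $A$, which is essentially the paper's construction.
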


\begin{proof}

By Lemma~\ref{lemma:facts on
ordered}(8), $B$ is an ordered simplicial set and the maps $X\inc B$
and $Y\inc B$ are simple inclusions.  
We must show that for $u,v\in B_0$ with $u\poleq v$, the mapping space
$\jC(B)(u,v)$ is contractible.  Suppose that $u$ and $v$ are both in
$X$; then since $X\inc B$ is simple, any necklace $T\to B_{u,v}$ must
factor through $X$.  It follows that $\jC(B)(u,v)=\jC(X)(u,v)$, which
is contractible since $X$ is strongly ordered.  The case $u,v\in Y$ is
analogous.  We claim we cannot have $u\in Y\backslash A$ and $v\in X\backslash A$.
For if this is so and if $T\ra B$ is a spine connecting $u$ to $v$, then there is a last
vertex $j$ of $T$ that maps into $Y$.  The $1$-simplex leaving that vertex
then cannot belong entirely to $Y$, hence it belongs entirely to $X$.
So $j$ is in both $X$ and $Y$, and hence it is in $A$.  Then we have
$u\poleq j$ and $j\in A$, which by assumption (3) implies $u\in A$, a
contradiction.

The only remaining case to analyze is when $u\in X$ and $v\in Y\backslash A$.
Consider the poset $A_0$ of vertices of $A$, under the relation
$\poleq$.  Let $P$ denote the collection of linearly ordered subsets $S$
of $A_0$ having the property that $u\poleq a\poleq v$ for all $a\in S$.  That
is, each element of $P$ is a chain $u\poleq a_1 \poleq\cdots \poleq
a_n\poleq v$ where each $a_i\in A$.
 We regard $P$ as a
category, where the maps are  inclusions.  
Also let $P_0$ denote the subcategory of $P$ consisting of all
subsets except $\emptyset$. 

Define a functor $D\taking P^{op}\to\Cat$
by sending $S\in P$ to 
\[\{ [T,T\inj B_{u,v}]\hspace{.1in} |
\hspace{.1in} S\subseteq J_T\}, 
\] 
the full subcategory of
$(\Nec\ovcat B)_{u,v}$ spanned by objects $T\To{m} B_{u,v}$ for which
$m$ is an injection and $S\subseteq J_T$.
Let us adopt the notation
\[ M_S(u,v)=\colim_{T\in D(S)} \jC(T)(\alpha,\omega).
\]
Note that there is a natural map
\[ M_\emptyset(u,v) \lra \colim_{T\in (\Nec\ovcat S)_{u,v}}
\jC(T)(\alpha,\omega) \iso \jC(B)(u,v).
\]
The first map is not {\it a priori\/} an isomorphism because in the
definition of $D(\emptyset)$ we require that the map $T\ra B$ be an
injection.  However, using Lemma~\ref{le:jCL-necklace} (or
Corollary~\ref{cor:maps in ordered}) it follows at once that the map
actually is an isomorphism.    

We claim that for each $S$ in $P_0$
the ``latching" map 
$$
L_S\colon \colim_{S'\supset S} M_{S'}(u,v) \ra M_S(u,v)
$$ 
is an injection, where the colimit is over sets $S'\in P$ which strictly
contain $S$.  
To see this, suppose that one has
a triple $[T,T\inc B_{u,v},t\in \jC(T)(\alpha,\omega)_n]$ giving an
$n$-simplex of $M_{S'}(u,v)$ and another triple
$[T',T'\inc B_{u,v},t'\in \jC(U)(\alpha,\omega)_n]$ giving an $n$-simplex
of $M_{S''}(u,v)$.  If these become
identical in $M_S(u,v)$ then it must be that they have the same flankification $\bar{T}=\bar{U}$ and $t=t'$.  Note that every joint of $T$ is a joint of $\bar{T}$, so the joints of $\bar{T}$ include both $S'$ and $S''$.  Because the joints of
any necklace are linearly ordered, it follows that $S'\cup S''$ is
linearly ordered.  Since $T\to\bar{T}$ is an injection, we may consider the triple $[\bar{T},\bar{T}\inc
B_{u,v},t]$ as an $n$-simplex in $M_{S'\cup S''}(u,v)$, which maps to
the two original triples in the colimit;  this proves injectivity.

We claim that the latching map $L_\emptyset\colon \colim_{S\in
P_0^{op}} M_S(u,v)\ra M_\emptyset(u,v)$ is an isomorphism.
Injectivity was established above.  For surjectivity, one needs to
prove that if
$T$ is a necklace and $T\inc B_{u,v}$ is an inclusion, then $T$ must
contain at least one vertex of $A$ as a joint.  To see this, recall
that every simplex of $B$ either lies entirely in $X$ or entirely in
$Y$.  Since $v\notin X$, there is a last joint $j_1$ of $T$ which maps
into $X$.  If $C$ denotes the bead whose initial vertex is $j_1$, then
the image of $C$ can not lie entirely in $X$; so it lies entirely in
$Y$, which means that $j_1$ belongs to both $X$ and $Y$---hence it
belongs to $A$.

From here the argument proceeds as follows.  We will show:
\begin{enumerate}[(i)]
\item The natural map $\hocolim_{S\in P_0^{op}} M_S(u,v)\ra
\colim_{S\in P_0^{op}} M_S(u,v)$ is a Kan equivalence;
\item Each $M_S(u,v)$ is contractible, hence the above homotopy
colimit is Kan equivalent to the nerve of $P_0^{op}$;
\item The nerve of $P_0$ (and hence also $P_0^{op}$) is contractible.
\end{enumerate}
This will prove that $M_\emptyset(u,v)=\jC(B)(u,v)$ is contractible, as desired. 

For (i) we refer to \cite[Section 13]{D} and use the fact that
$P_0^{op}$ has the structure of a directed Reedy category.  Indeed, we
can assign a degree function to $P$ that sends a set $S\subseteq A_0$
to the nonnegative integer $|A_0-S|$; all non-identity morphisms in
$P_0^{op}$ strictly increase this degree.  By \cite[Proposition
13.3]{D} (but with $\Top$ replaced by $\sSet$) it is enough to show that
all the latching maps $L_S$ are cofibrations, and this has already
been established above.

For claim (iii), write $\theta$ for the initial vertex of $A_{u\poleq}$.
Define a functor $F\colon P_0 \ra P_0$ by $F(S)=S\cup \{\theta\}$;
note that $S\cup\{\theta\}$ will be linearly ordered, so this makes
sense.  Clearly there is a natural transformation from the identity
functor to $F$, and also from the constant $\{\theta\}$ functor to $F$.
It readily follows that the identity map on $NP_0$ is homotopic to a
constant map, hence $NP_0$ is contractible.

Finally, for (ii) fix some $S\in P_0$ and let $u=a_0\prec a_1\prec\ldots\prec
a_n\prec a_{n+1}=v$ denote the complete set of elements of
$S\cup\{u,v\}$.
 A necklace $T\inj
B_{u,v}$ whose joints include the elements of $S$ can be split along
the joints, and thus uniquely written as the wedge of necklaces
$T_i\inj B_{a_i,a_{i+1}}$, one for each $0\leq i\leq n$.  Under this
identification, one has
$$
\jC(T)(\alpha,\omega)\iso \jC(T_0)(\alpha_0,\omega_0)
\cross\cdots\cross\jC(T_n)(\alpha_n,\omega_n).
$$ 
Thus $D(p)$ is isomorphic to the category $$
(\Nec\ovcat^m X)_{u,a_1}\cross(\Nec\ovcat^m
A)_{a_1,a_2}\cross\cdots\cross(\Nec\ovcat^m
A)_{a_{n-1},a_n}\cross(\Nec\ovcat^m Y)_{a_n,v}, $$ where
$(\Nec\ovcat^m X)_{s,t}$ denotes the category whose objects are $[T,T\ra X_{s,t}]$
where the map $T\ra X$ is a monomorphism.

Now, it is a general fact
about colimits taken in the category of (simplicial) sets,
that if $M_i$ is a category and
$F_i\taking M_i\to\sSet$ is a functor, for each $i\in \{1,\ldots,n\}$,
then there is an isomorphism of simplicial sets
\begin{align}\tag{A.2.2}\label{dia:colimits over products}
\colim_{M_1\cross\cdots\cross M_n} (F_1\cross\cdots\cross F_n)
\To{\iso}\left(\colim_{M_1}F_1\right)\cross\cdots\cross\left(\colim_{M_n}F_n\right).\end{align}
Applying this in our case, we find that
\[ M_S(u,v) \iso \jC(X)(u,a_1)\times \jC(A)(a_1,a_2)\times \cdots
\times \jC(A)(a_{n-1},a_{n})\times \jC(Y)(a_{n},v).
\]
Note that this is always contractible, since $X, A$,
and $Y$ are strongly ordered.
This proves (ii) and completes the argument.
\end{proof}

\begin{prop}\label{prop:product of necklaces}

Let $T_1,\ldots,T_m$ be necklaces.  Then their product
$T_1\cross\cdots\cross T_m$ is a strongly ordered simplicial set.

\end{prop}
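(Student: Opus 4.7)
The plan is to induct on $N=\sum_{i=1}^m(k_i-1)$, where $k_i$ denotes the number of beads of $T_i$.  The inductive step handles the case $N\geq 1$, while the base case $N=0$ treats products of simplices by a direct argument via the $\jCn$ model.

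For the inductive step, suppose some $T_i$ has at least two beads and write $T_i=T_i'\Wedge T_i''$ at one of its interior joints $v$; both $T_i'$ and $T_i''$ have strictly fewer beads than $T_i$, so $N$ strictly drops when we replace $T_i$ by either factor.  Since products distribute over pushouts in $\sSet$, there is a pushout $X=X'\amalg_A X''$, where $X'=T_1\times\cdots\times T_i'\times\cdots\times T_m$, $X''=T_1\times\cdots\times T_i''\times\cdots\times T_m$, and $A=T_1\times\cdots\times\{v\}\times\cdots\times T_m$.  By induction each of these three simplicial sets is strongly ordered, so it suffices to verify the hypotheses of Lemma~\ref{lemma:strongly ordered} for the diagram $X'\fromm A\too X''$.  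The set $A$ clearly has finitely many vertices.  The inclusion $A\inc X'$ is simple because $v$ is the final vertex of $T_i'$: any directed path in $T_i'$ starting and ending at $v$ must be constant at $v$ by antisymmetry of $\poleq$; symmetrically, $A\inc X''$ is simple since $v$ is the initial vertex of $T_i''$.  For any $x=(x_1,\ldots,x_m)\in X'$, the element $(x_1,\ldots,x_{i-1},v,x_{i+1},\ldots,x_m)\in A$ is the initial element of $A_{x\poleq}$.  Finally, if $y\in X''$ and $a\in A$ satisfy $y\poleq_{X''}a$, then the $i$-th coordinate yields $y_i\poleq v$ in $T_i''$, whence $y_i=v$; hence $y\in A$.

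For the base case $N=0$, $X=\Delta^{n_1}\times\cdots\times\Delta^{n_m}$ is the nerve of the product poset $[n_1]\times\cdots\times[n_m]$.  Fix $a\poleq b$ in $X$; by Theorem~\ref{th:main} it suffices to show that the nerve of $(\Nec\ovcat X)_{a,b}$ is contractible.  Define a functor $F$ on this overcategory by $F[T,T\to X]=[\Delta[T],\Delta[T]\to X]$; the extension $\Delta[T]\to X$ exists uniquely because the vertex sequence of $T$ maps to a chain in the poset and every such chain determines a simplex of the nerve.  The inclusion $T\inc\Delta[T]$ yields a natural transformation $\id\Rightarrow F$.  Define also the constant functor $F'[T,T\to X]=[\Delta^1,e]$, where $e\taking\Delta^1\to X$ is the (possibly degenerate) edge from $a$ to $b$; the inclusion $\Delta^1\inc\Delta[T]$ along $\alpha_T,\omega_T$ gives a natural transformation $F'\Rightarrow F$.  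Since $F'$ factors through the terminal category, taking nerves shows the identity map is homotopic to a constant, so the nerve is contractible.

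The main technical point is the verification that $A\inc X'$ is a simple inclusion, where one really needs the antisymmetry of $\poleq$ in $T_i'$.  Once that is in place, both the inductive step and the base case reduce to direct applications of Lemma~\ref{lemma:strongly ordered} and Theorem~\ref{th:main}.
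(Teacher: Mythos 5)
Your proof is correct and takes essentially the same route as the paper: the base case of a product of simplices is handled by the identical $\Delta[T]$/constant-functor contraction of the overcategory (plus Theorem~\ref{th:main}), and the inductive step glues along a pushout over a product with a joint and invokes Lemma~\ref{lemma:strongly ordered}, with the same verification of simplicity and of hypotheses (1)--(3). The only difference is bookkeeping --- the paper runs a double induction attaching one bead at a time to a single factor, while you split an arbitrary multi-bead factor at an interior joint and induct on the total bead excess --- which changes nothing essential.
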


\begin{proof}

We begin with the case $P=\Delta^{n_1}\cross\cdots\cross\Delta^{n_m}$,
where each necklace is a simplex, and show that $P$ is strongly
ordered.  It is ordered by Lemma~\ref{lemma:facts on ordered}, so
choose vertices $a,b\in P_0$ with $a\poleq b$.  If $T$ is a necklace,
any map $T\ra \Delta^j$ extends uniquely to a map
$\Delta[T]\ra \Delta^j$.  It follows that any map $T\ra P_{a,b}$
extends uniquely to $\Delta[T]\ra P_{a,b}$.  Consider the two
functors
\[ f,g\colon (\Nec\ovcat P)_{a,b} \ra (\Nec\ovcat P)_{a,b} \]
where $f$ sends $[T,T\ra P]$ to $[\Delta[T],\Delta[T]\ra P]$ and
$g$ is the constant functor sending everything to
$[\Delta^1,x\colon\Delta^1 \ra P]$ where $x$ is the unique edge of $P$
connecting $a$ and $b$.  Then clearly there are natural
transformations $\id \ra f$ and $g\ra f$, showing that the three maps
$\id$, $f$, and $g$ induce homotopic maps on the nerves.  So the
identity induces the null map, hence 
$\jCn(P)(a,b)=N((\Nec\ovcat P)_{a,b})$ is contractible.  The result for
$P$ now follows by  Theorem~\ref{th:main}.

For the general case, assume by induction that we know the
result for all products of necklaces in which at most $k-1$ of them are
not equal to beads.  The case $k=1$ was handled by the previous
paragraph.
Consider a product
\[ Y=T_1\times \cdots \times T_k \times D\]
where each $T_i$ is a necklace and $D$ is a product of beads.
Write $T_k=B_1\Wedge B_2\Wedge\cdots\Wedge B_r$ where each $B_i$ is a
bead, and let
\[ P_{j}=(T_1\cross\cdots\cross
T_{k-1})\cross(B_{1}\vee\cdots\vee B_{j})\cross D.
\]
We know by induction that $P_1$ is strongly ordered, and we will prove
by a second induction that the same is true for each $P_j$.  So assume
that $P_j$ is strongly ordered for some $1\leq j<r$.

Let us denote $A=(T_1\cross\cdots\cross
T_{k-1})\cross\Delta^0\cross D$
and 
\[ Q=(T_1\cross\cdots\cross
T_{k-1})\cross B_{j+1}\cross D.
\]
Then we have $P_{j+1}=P_{j}\amalg_A Q$, and we know that $P_{j}, A$,
and $Q$ are strongly ordered.  Note that the maps $A\to P_{j+1}$ and
$A\to Q$ are simple inclusions: they are the products of $\Delta^0\to
B_{j}$ (resp. $\Delta^0\to B_{j+1}$) with identity maps, and
any inclusion $\Delta^0\to\Delta^m$ is clearly simple.
It is easy to check that hypothesis (1)--(3) of
Lemma~\ref{lemma:strongly ordered} are satisfied, and so this finishes the proof.
\end{proof}

\begin{proof}[Proof of Proposition~\ref{pr:products=gadgets}]
This follows immediately from Proposition~\ref{prop:product of necklaces}.
\end{proof}

\subsection{The category $\jC(\Delta^n)$}

Our final goal is to give the proof of Lemma~\ref{le:C(Delta^n)}.
Recall that this says there is an isomorphism
$$\jC(\Delta^n)(i,j)\to N(P_{i,j})$$
for $n\in \N$ and $0\leq i,j\leq n$, where $P_{i,j}$ is the poset of subsets of $\{i,i+1,\ldots,j\}$ containing $i$ and $j$.

\begin{proof}[Proof of Lemma~\ref{le:C(Delta^n)}]

The result is obvious when $n=0$, so we assume $n>0$.  Let
$X=(FU)_\bullet([n])(i,j)$ and $Y=P_{i,j}$.  For each $\ell\in\N$, we
will provide an isomorphism $X_\ell\iso Y_\ell$, and these will be
compatible with face and degeneracy maps.

One understands $X_0=FU([n])(i,j)$ as the set of free compositions of
sequences of morphisms in $[n]$ which start at $i$ and end at $j$.  By keeping
track of the set of objects involved in this chain, we identify $X_0$
with the set of subsets of $\{i,i+1,\ldots,j\}$ which contain $i$ and
$j$.  This gives an isomorphism $X_0\to Y_0$.

Similarly, for $\ell>0$ one has that $X_\ell$ is the set of free
compositions of sequences of morphisms in $X_{\ell-1}$.  It is readily
seen that $X_\ell$ (even when $\ell=0$) is in one-to-one
correspondence with the set of ways to ``parenthesize" the sequence
$i,\ldots,j$ in such a way that every element is contained in
$(\ell+1)$-many parentheses (and no closed parenthesis directly
follows an open parenthesis).  Given such a parenthesized sequence,
one can rank the parentheses by ``interiority" (so that interior
parentheses have higher rank).  The face and degeneracy maps on $X$
are given by deleting or repeating all the parentheses of a fixed
rank.

Under this description, a vertex in an $\ell$-simplex of $X$ is given
by choosing a rank and then ignoring all parentheses except those of
that rank.  Then by taking only the last elements before a
close-parenthesis, we get a subset of $\{i+1,\ldots,j\}$ containing
$j$; by unioning with $\{i\}$, we get a well-defined element of $Y_0$.
Given two ranks, the subset of $\{i+1,\ldots,j\}$ corresponding to the
higher rank will contain the subset corresponding to the lower rank.
One also sees immediately that an $\ell$-simplex in $X$ is determined
by its set of vertices, and so we can identify $X_\ell$ with the set
of sequences $S_0\subseteq S_1\subseteq\cdots\subseteq
S_\ell\subseteq\{i,i+1,\ldots,j\}$ containing $i$ and $j$.  This is
precisely the set of $\ell$-simplices of $Y$, so we have our
isomorphism.  It is clearly compatible with face and degeneracy maps.
\end{proof}


\bibliographystyle{amsalpha}

\begin{thebibliography}{JTT}
\bibitem[AM]{AM} M. Artin and B. Mazur, {\em \'Etale homotopy\/}, Lecture
Notes in Math. {\bf 100}, Springer-Verlag, Berlin-New York, 1969.

\bibitem[B]{Bergner} J. Bergner, {\em A model category structure on
the category of simplicial categories\/}, Trans. Amer. Math. Soc. {\bf
359} (2007), no. 5, 2043--2058.

\bibitem[CP]{CP} J-M Cordier, T. Porter. {\em Homotopy coherent category theory} Trans. Amer. Math. Soc. {\bf 349} (1997), no. 1, 1-54.

\bibitem[D]{D} D. Dugger, {\em A primer on homotopy colimits},
preprint, http://math.uoregon.edu/$\sim$ddugger.


\bibitem[DS]{DS} D. Dugger and D. Spivak, {\em Mapping spaces in
quasic-ategories\/}, preprint, 2009.

\bibitem[H]{H} P.~J. Hirschhorn, {\em Model categories and
localizations\/}, Mathematical Surveys and Monographs {\bf 99},
American Mathematical Society, Providence, RI, 2003.



\bibitem[J1]{J1} A. Joyal, {\em Quasi-categories and Kan complexes\/},
J. Pure Appl. Algebra {\bf 175} (2002), no. 1--3, 207--222.

\bibitem[J2]{J2} A. Joyal, {\em The theory of quasi-categories\/}, preprint.

\bibitem[L]{L} J. Lurie, {\em Higher topos theory\/}, Annals of
Mathematics Studies {\bf 170}, Princeton University Press, Princeton,
NJ, 2009.

\bibitem[Q]{Q} D. Quillen, {\em Higher algebraic $K$-theory. I.\/},
Algebraic $K$-theory, I: Higher $K$-theories (Proc. Conf., Batelle
Memorial Inst., Seattle, Wash., 1972), pp. 85--147.  Lecture Notes in
Math. {\bf 341}, Springer, Berlin, 1973.    
\end{thebibliography}

\end{document}